\documentclass[a4paper,12pt]{amsart}

\setlength{\textheight}{21.5cm}
\setlength{\textwidth}{14.5cm}
\setlength{\oddsidemargin}{7mm}
\setlength{\evensidemargin}{7mm}
\setlength{\topmargin}{1cm}
\setlength{\headsep}{7mm}

\usepackage{amsmath}
\usepackage{amssymb}
\usepackage{latexsym}
\usepackage[mathscr]{eucal}
\usepackage{graphics}
\usepackage{multicol}
\usepackage{braket}
\usepackage{longtable}

\usepackage{newtxtext, newtxmath}

\usepackage{ascmac}
\usepackage{framed}
\usepackage{ulem}
\usepackage{cancel}
\usepackage{arydshln}
\usepackage{bm}
\allowdisplaybreaks
\newcommand{\relmiddle}[1]{\mathrel{}\middle#1\mathrel{}}

\theoremstyle{plain}
\newtheorem{thm}{\sc\ Theorem}[section]
\newtheorem{lem}[thm]{\sc Lemma}
\newtheorem{prop}[thm]{\sc Proposition}
\newtheorem{corollary}[thm]{\sc Corollary}

\makeatletter
\def\thefootnote{\ifnum\c@footnote>\z@\leavevmode\lower.5ex%
	\hbox{$^{\@arabic\c@footnote)}$}\fi}
\makeatother

\makeatletter
\def\section{\@startsection{section}{1}%
	\z@{.7\linespacing\@plus\linespacing}{.5\linespacing}%
	{\normalfont\bfseries\raggedright}}

\def\subsection{\@startsection{subsection}{2}%
	\z@{.5\linespacing\@plus.7\linespacing}{.3\linespacing}%
	{\normalfont\bfseries\raggedright}}
\makeatother

\theoremstyle{definition}

\theoremstyle{remark}

\theoremstyle{definition} 

\if0
\DeclareFontShape{JY1}{mc}{m}{it}{<5> <6> <7> <8> <9> <10> sgen*min
	<10.95><12><14.4><17.28><20.74><24.88> min10 <-> min10}{}
\DeclareFontShape{JT1}{mc}{m}{it}{<5> <6> <7> <8> <9> <10> sgen*tmin
	<10.95><12><14.4><17.28><20.74><24.88> tmin10 <-> tmin10}{}
\DeclareFontShape{JY1}{mc}{m}{sc}{<5> <6> <7> <8> <9> <10> sgen*min
	<10.95><12><14.4><17.28><20.74><24.88> min10 <-> min10}{}
\DeclareFontShape{JY1}{mc}{bx}{sc}{<5> <6> <7> <8> <9> <10> sgen*min
	<10.95><12><14.4><17.28><20.74><24.88> min10 <-> min10}{}
\DeclareFontShape{JT1}{mc}{m}{sc}{<5> <6> <7> <8> <9> <10> sgen*tmin
	<10.95><12><14.4><17.28><20.74><24.88> tmin10 <-> tmin10}{}
\DeclareFontShape{JT1}{mc}{bx}{sc}{<5> <6> <7> <8> <9> <10> sgen*tmin
	<10.95><12><14.4><17.28><20.74><24.88> tmin10 <-> tmin10}{}
\DeclareFontShape{JY1}{mc}{bx}{sc}{<5> <6> <7> <8> <9> <10> sgen*tmin
    <10.95><12><14.4><17.28><20.74><24.88> tmin10 <-> tmin10}{}
\fi

\if0
\DeclareFontShape{JY2}{mc}{m}{it}{<5> <6> <7> <8> <9> <10> sgen*min
	<10.95><12><14.4><17.28><20.74><24.88> min10 <-> min10}{}
\DeclareFontShape{JT2}{mc}{m}{it}{<5> <6> <7> <8> <9> <10> sgen*tmin
	<10.95><12><14.4><17.28><20.74><24.88> tmin10 <-> tmin10}{}
\DeclareFontShape{JY2}{mc}{m}{sc}{<5> <6> <7> <8> <9> <10> sgen*min
	<10.95><12><14.4><17.28><20.74><24.88> min10 <-> min10}{}
\DeclareFontShape{JT2}{mc}{m}{sc}{<5> <6> <7> <8> <9> <10> sgen*tmin
	<10.95><12><14.4><17.28><20.74><24.88> tmin10 <-> tmin10}{}
\fi

\def\dfrac#1#2{\displaystyle \frac{#1}{#2}}

\def\det{\mbox{\rm {det}}}
\def\diag{\mbox{\rm {diag}}}

\def\Der{\mbox{\rm {Der}}}

\def\ad{\mbox{\rm {ad}}}

\def\Iso{\mbox{\rm {Iso}}}

\def\Ker{\mbox{\rm {Ker}}}

\def\ov{\overline}

\def\dfrac#1#2{\displaystyle \frac{#1}{#2}}

\def\R{\mbox{\boldmath $R$}}

\def\Z{\mbox{\boldmath $Z$}}

\def\0{\mbox{\boldmath {0}}}
\def\1{\mbox{\boldmath {1}}}
\def\2{\mbox{\boldmath {2}}}
\def\3{\mbox{\boldmath {3}}}
\def\4{\mbox{\boldmath {4}}}
\def\5{\mbox{\boldmath {5}}}
\def\6{\mbox{\boldmath {6}}}
\def\7{\mbox{\boldmath {7}}}
\def\8{\mbox{\boldmath {8}}}
\def\9{\mbox{\boldmath {9}}}

\usepackage{epic,eepic}

\begin{document}

\title[Realizations of inner automorphisms of order four on $E_8$
Part III]{Realizations of inner automorphisms \\
 of order four and fixed points subgroups \\
 by them on the connected compact \\
 exceptional Lie group $E_8$,  Part III}

\author[Toshikazu Miyashita]{\vspace{-4mm}}

\dedicatory{\rm By  \\ \vspace{3mm} Toshikazu {\sc Miyashita} }



\subjclass[2010]{ 53C30, 53C35, 17B40.}

\keywords{4-symmetric spaces, exceptional Lie groups}

\address{1365-3 Bessho onsen    \endgraf
	Ueda City                     \endgraf
	Nagano Prefecture 386-1431    \endgraf
	Japan}
\email{anarchybin@gmail.com}

\begin{abstract}
The compact simply connected Riemannian 4-symmetric spaces were
classified by J.A. Jim{\'{e}}nez according to type of the Lie algebras.
As homogeneous manifolds, these spaces are of the
form $G/H$, where $G$ is a connected compact simple Lie group with
an automorphism $\tilde{\gamma}$ of order four on $G$ and  $H$ is a
fixed points subgroup $G^\gamma$ of $G$. According to the
classification by J.A. Jim{\'{e}}nez, there exist seven compact
simply connected Riemannian 4-symmetric spaces $ G/H $ in the case where $ G $ is of type $ E_8 $. In the present article,
we give the explicit form of automorphisms  $\tilde{\omega}^{}_4,\tilde{\kappa}^{}_4$  and $ \tilde{\varepsilon}^{}_4$ of order four on $E_8$ induced by the
$C$-linear transformations $\omega^{}_4, \kappa^{}_4$  and $
\varepsilon^{}_4$ of the 248-dimensional $ C $-vector
space ${\mathfrak{e}_8}^{C}$, respectively.
Further, we determine the structure of these fixed points subgroups
$(E_8)^{{}_{\omega^{}_4}}, (E_8)^{{}_{\kappa^{}_4}}$ and $(E_8)^{{}
	_{\varepsilon^{}_4}}$ of $ E_8 $. These amount to the global realizations of three spaces among seven Riemannian 4-symmetric spaces $ G/H $ above corresponding to the Lie algebras $ \mathfrak{h}=\mathfrak{su}(2)\oplus i\bm{R} \oplus
\mathfrak{e}_6, i\bm{R} \oplus \mathfrak{so}(14)$ and $\mathfrak{h}
=\mathfrak{su}(2) \oplus i\bm{R}\oplus \mathfrak{so}(12)$, where $ \mathfrak{h}={\rm Lie}(H) $. With this article, the all realizations of inner automorphisms of order four and fixed points subgroups by them have been completed in $ E_8 $.
\end{abstract}

\maketitle

\section {Introduction}
Let $G$ be a Lie group and $H$ a compact subgroup of $G$. A
homogeneous space $G/H$ with $G$-invariant Riemannian metric $g$ is
called a Riemannian $4$-symmetric space if there exists an
automorphism $\tilde{\gamma}$ of order four on $G$ such that $({G^
	\gamma})_{0} \subset H \subset G^\gamma$, where $G^\gamma$ and $({G^
	\gamma})_{0}$ are the fixed points subgroup of $G$ by $
\tilde{\gamma}$ and its identity component, respectively.

Now, for the exceptional compact Lie group of type $E_8$, as in Table
below, there exist seven cases of the compact simply connected
Riemannian $4$-symmetric spaces which were classified by J.A.
Jim\'{e}nez as mentioned in abstract (\cite{Jim}). Accordingly, our
interest is to realize the groupfication for the classification as
Lie algebra.

Our results of groupfication corresponding to the Lie algebra $
\mathfrak{h}$ in Table are given as follows.

\begin{longtable}[c]{clll}
	\noalign{\hrule height 1pt}
	\noalign{\vspace{-1pt}}
	&&&\\[-6pt]
	Case & \hspace*{7mm}  $\mathfrak{h}$  &  $\tilde{\gamma}
	$  &  \hspace*{15mm}  $H=G^\gamma$
	\\
	\noalign{\vspace{3pt}}
	\noalign{\hrule height 0.5pt}
	&&&\\[-8pt]
	1
	& $\mathfrak{so}(6) \oplus \mathfrak{so}(10)$
	& $\tilde{\sigma}'_{{}_4}$
	& $(Spin(6) \times Spin(10))/\bm{Z}_4$
	\\[6pt]
	2
	& $i\bm{R} \oplus \mathfrak{su}(8)$
	& $\tilde{w}_{{}_4}$
	& $(U(1) \times SU(8))/\bm{Z}_{24}$
	\\[6pt]
	3
	& $i\bm{R} \oplus \mathfrak{e}_7 $
	& $\tilde{\upsilon}_{{}_4}$
	& $(U(1) \times E_7)/\bm{Z}_2$
	\\[6pt]
	4
	& $\mathfrak{su}(2) \oplus \mathfrak{su}(8)$
	& $\tilde{\mu}_{{}_4}$
	& $(SU(2) \times SU(8))/\bm{Z}_4$
	\\[6pt]
	5
	& $\mathfrak{su}(2) \oplus i\bm{R} \oplus \mathfrak{e}_6$
	& $\tilde{\omega}_{{}_4}$
	& $(SU(2) \times U(1) \times E_6)/(\bm{Z}_2 \times \bm{Z}_3)$
	\\[6pt]
	6
	& $i\bm{R} \oplus \mathfrak{so}(14)$
	& $\tilde{\kappa}_{{}_4}$
	& $(U(1) \times Spin(14))/\bm{Z}_4$
	\\[6pt]
	7
	& $\mathfrak{su}(2) \oplus i\bm{R} \oplus \mathfrak{so}(12)$
	& $\tilde{\varepsilon}_{{}_4}$
	& $(SU(2) \times  U(1) \times Spin(12))/(\bm{Z}_2 \times \bm{Z}_2)$
	\\[6pt]
	\noalign{\hrule height 1pt}
\end{longtable}


In \cite{miya2} and \cite{miya3}, the author has already realized the groupfication for Case $ 1 $ and Cases $ 2,3,4 $ in Table, respectively.
In the present article, we state the realizations of the
group $H$ for Cases 5,6 and Case 7.

Finally, the author would like to say that the feature of this
article is to give elementary proofs of the isomorphism of groups
by using the homomorphism theorem, and
the all global realizations have been completed in $ E_8 $.

This article is a continuation of \cite{miya3}, hence we start from Section $ 7 $. The readers refer to \cite{miya2} for preliminary results and also to \cite{miya2},\cite{miya3},\cite{miya0} or \cite{iy0} for notations.
Note that we change the numbering of Case 5 and Case 6 in
\cite{miya2} to the numbering of Case 3 and Case 4 in the present
article, respectively.

\setcounter{section}{6}

\section { Case 5. The automorphism $\tilde{\omega}^{}_4$ of
order four and the group $(E_8)^{\omega^{}_4}$}

In this section (also in Section $ 8 $ and Section $ 9 $), we use the $248$-dimensional $ C $-vector space ${\mathfrak{e}_8}^{C}$ used in Case 1 (\cite{miya2}) and the simply connected compact exceptional Lie group of type $E_8$ constructed by T. Imai and I. Yokota (\cite{Imai}).

We define a $ C $-linear transformation $\omega^{}_4$ of $
{\mathfrak{e}_8}^C$ by
$$
\omega^{}_4(\varPhi, P, Q, r, s, t)=(\iota\varPhi \iota^{-1}, -\iota
P, -\iota Q, r, s,  t),
$$
where $\iota$ on the right hand side is the $
C$-linear transformation of $\mathfrak{P}^C$ defined in \cite [Definition of Subsection 4.10 (p.131)]{iy0} (the space  $ \mathfrak{P}^C $, called the Freudenthal $ C $-vector space, is defined in \cite[Preliminaries (p.94)]{miya2}).
Note that $ \omega^{}_4 $ is the composition transformation of $ \iota, \upsilon \in E_7 \subset E_8 $, where $\upsilon$ is the $
C$-linear transformation of ${\mathfrak{e}_8}^{C}$ defined in \cite[Definition of Subsection 5.7(p.174)]{iy0}.
Moreover since $ \iota,\upsilon $ are expressed as elements of $ E_8 $ by
\begin{align*}
    \iota&=\exp\left( \frac{2\pi i}{4}\ad(\varPhi(0,0,0,3),0,0,0,0,0)\right),
    \\
    \upsilon&=\exp\left( \frac{2\pi i}{4}\ad(\varPhi(0,0,0,6),0,0,0,0,0)\right),
\end{align*}
respectively and together with $ [\varPhi(0,0,0,3), \varPhi(0,0,0,6)]=0 $, we have
\begin{align*}
    \omega^{}_4=\exp\left( \frac{2\pi i}{4}\ad(\varPhi(0,0,0,9),0,0,0,0,0)\right).
\end{align*}
Hence it follows from above that $\omega^{}_4 \in E_8 $ and $ (\omega^{}_4)^4=1, (\omega^{}_4)^2=\upsilon$, so that
$\omega^{}_4 $ induces the inner automorphism
$\tilde{\omega}^{}_4$ of order four on $E_8$: $\tilde{\omega}^{}_4(\alpha)=\omega^{}_4 \alpha {\omega^{}_4}^{-1}, \alpha \in E_8$.
\vspace{2mm}

Now, we will study the subgroup $(E_8)^{\omega^{}_4}$ of $ E_8 $:
\begin{align*}
(E_8)^{\omega^{}_4}=\left\lbrace  \alpha \in E_8  \relmiddle{|}
\omega^{}_4 \alpha=\alpha \omega^{}_4 \right\rbrace .
\end{align*}

The aim of this section is to determine the structure of the group $(E_8)^{\omega^{}_4}$.
Before that, we prove the lemma and propositions needed later.

\begin{lem}\label{lem 7.1}
    The Lie algebra $(\mathfrak{e}_8)^{\omega^{}_4}$ of the group
    $(E_8)^{\omega^{}_4}$ is given by
    \begin{align*}
    (\mathfrak{e}_8)^{\omega^{}_4}&=\left\lbrace \ad(R) \in \Der(\mathfrak{e}_8)\relmiddle{|} \omega^{}_4\ad(R)=\ad(R)\omega^{}_4, R \in \mathfrak{e}_8 \right\rbrace
    \\
    &\cong \left\lbrace  R \in  \mathfrak{e}_8 \relmiddle{|}
    \omega_4 R=R \right\rbrace
    \\
    &=\left\lbrace R=(\varPhi, 0,0,r, s,-\tau s) \relmiddle{|} \varPhi \in
    (\mathfrak{e}_7)^{\iota} \cong \mathfrak{u}(1) \oplus \mathfrak{e}_6, r \in i\R, s \in C \right\rbrace ,
    \end{align*}
where $ (\mathfrak{e}_7)^{\iota} $ is the Lie algebra of the group $ (E_7)^\iota:=\{\alpha \in E_7 \,|\, \iota\alpha=\alpha\iota \} $.

\end{lem}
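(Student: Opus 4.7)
The plan is to reduce the defining commutation condition to an eigenvalue equation for $\omega^{}_4$ on $\mathfrak{e}_8$ and then solve it componentwise. Since $\mathfrak{e}_8$ is simple with trivial centre, the adjoint map $R\mapsto \ad(R)$ is an isomorphism $\mathfrak{e}_8\xrightarrow{\sim}\Der(\mathfrak{e}_8)$. Because $\omega^{}_4\in E_8$ acts on $\mathfrak{e}_8$ as a Lie algebra automorphism, the standard identity $\omega^{}_4\,\ad(R)\,\omega^{}_4{}^{-1}=\ad(\omega^{}_4 R)$ holds, so the condition $\omega^{}_4\,\ad(R)=\ad(R)\,\omega^{}_4$ becomes, by injectivity of $\ad$, the eigenvector equation $\omega^{}_4 R=R$. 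This yields the first displayed isomorphism in the lemma.

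Substituting the explicit formula
\[
\omega^{}_4(\varPhi, P, Q, r, s, t)=(\iota\varPhi\iota^{-1},\,-\iota P,\,-\iota Q,\,r,\,s,\,t)
\]
and matching components, the $r$, $s$, $t$ slots are tautologically fixed, so the standard reality conditions of the compact form of $\mathfrak{e}_8$ alone leave $r\in i\bm{R}$, $s\in C$ and $t=-\tau s$. The $\varPhi$-slot gives $\iota\varPhi\iota^{-1}=\varPhi$, i.e.\ $\varPhi\in(\mathfrak{e}_7)^{\iota}$; the identification $(\mathfrak{e}_7)^{\iota}\cong\mathfrak{u}(1)\oplus\mathfrak{e}_6$ I would quote directly from the treatment of $(E_7)^{\iota}$ in \cite{iy0}. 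The $P$- and $Q$-slots produce $\iota P=-P$ and $\iota Q=-Q$.

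The main obstacle is to show that these last two equations force $P=Q=0$. Here I would exploit the identity $(\omega^{}_4)^2=\upsilon$ proved in the preamble to the lemma: on $\mathfrak{P}^C$, $(\omega^{}_4)^2$ acts as $(-\iota)^2=\iota^2$, while $\upsilon$ is well known to act as $-\mathrm{id}$ on $\mathfrak{P}^C$ (it generates the centre of $E_7$ in its defining $56$-dimensional representation, see \cite{iy0}). Hence $\iota^2=-\mathrm{id}$ on $\mathfrak{P}^C$, so the eigenvalues of $\iota$ there are $\pm i$, and $-1$ is not among them; consequently $\iota P=-P$ forces $P=0$, and symmetrically $Q=0$. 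Assembling these pieces produces $R=(\varPhi,0,0,r,s,-\tau s)$ with $\varPhi\in(\mathfrak{e}_7)^{\iota}$, $r\in i\bm{R}$ and $s\in C$, which is exactly the description in the statement.
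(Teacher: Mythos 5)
Your proof is correct and amounts to exactly the "straightforward computation" the paper leaves implicit: using the adjoint isomorphism to reduce the commutation condition to $\omega^{}_4 R=R$, solving slot by slot (the $\varPhi$-slot giving $(\mathfrak{e}_7)^{\iota}$, the $P$- and $Q$-slots forcing $P=Q=0$), and quoting $(\mathfrak{e}_7)^{\iota}\cong\mathfrak{u}(1)\oplus\mathfrak{e}_6$ from Yokota just as the paper does. Your indirect argument that $\iota^2=-\mathrm{id}$ on $\mathfrak{P}^C$ via $(\omega^{}_4)^2=\upsilon$ is sound, and is equivalent to reading it off directly from $\iota(X,Y,\xi,\eta)=(-iX,\,iY,\,-i\xi,\,i\eta)$, whose eigenvalues on $\mathfrak{P}^C$ are $\pm i$.
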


In particular, we have $ \dim((\mathfrak{e}_8)^{\omega^{}_4})=(1+78)+1+2=82 $.
\begin{proof}
    By doing straightforward computation, we can obtain the required results. The isomorphism $ (\mathfrak{e}_7)^{\iota} \cong \mathfrak{u}(1) \oplus \mathfrak{e}_6 $ as Lie algebras follows from \cite[Theorem 4.10.2]{iy0}.
\end{proof}

\begin{prop}\label{prop 7.2}
    The group $ (E_8)^{\omega^{}_4} $ contains the group $ (E_7)^\iota $
    which is isomorphic to the group $ (U(1)\times E_6)/\Z_3,\Z_3=\{(1,1),(\omega, \allowbreak \phi(\omega^2)),(\omega^2,\phi(\omega))\} ${\rm :} $ (E_8)^{\omega^{}_4} \supset (E_7)^\iota \cong (U(1)\times E_6)/\Z_3 $, where $ \omega=(-1/2)+(\sqrt{3}/2)i $.
\end{prop}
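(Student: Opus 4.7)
The plan has two components: first, to verify the containment $(E_7)^\iota \subset (E_8)^{\omega^{}_4}$; second, to invoke the structure theorem of Yokota to identify $(E_7)^\iota$ with $(U(1) \times E_6)/\bm{Z}_3$.

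For the containment, I would take $\alpha \in (E_7)^\iota$, viewed as an element of $E_8$ through the natural embedding $E_7 \hookrightarrow E_8$ recalled in \cite{miya2}. This embedding acts on $R=(\varPhi,P,Q,r,s,t) \in {\mathfrak{e}_8}^{C}$ by $\alpha R = (\alpha\varPhi\alpha^{-1},\, \alpha P,\, \alpha Q,\, r,\, s,\, t)$, fixing the coefficients $r,s,t$. Since $\omega^{}_4$ acts only on the first three slots (as conjugation by $\iota$ on $\varPhi$ and as $-\iota$ on $P,Q$), commuting $\iota$ past $\alpha$ using the defining hypothesis $\iota\alpha=\alpha\iota$ yields $\omega^{}_4 \alpha R = \alpha \omega^{}_4 R$ immediately, so $\alpha \in (E_8)^{\omega^{}_4}$.

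For the isomorphism, I would appeal to \cite[Theorem 4.10.2]{iy0}, which provides an explicit surjective homomorphism $\varphi : U(1) \times E_6 \to (E_7)^\iota$ and computes its kernel. The kernel is the diagonal finite subgroup $\bm{Z}_3=\{(1,1),(\omega,\phi(\omega^2)),(\omega^2,\phi(\omega))\}$, where $\omega = -1/2+(\sqrt{3}/2)i$ is a primitive cube root of unity and $\phi$ denotes the standard embedding of the group of cube roots of unity into the center $Z(E_6)\cong \bm{Z}_3$. Applying the first isomorphism theorem (the homomorphism theorem) to $\varphi$ then gives the required isomorphism $(E_7)^\iota \cong (U(1)\times E_6)/\bm{Z}_3$.

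The main obstacle, which I would delegate entirely to the cited theorem, is the kernel computation: one must identify exactly which pairs $(u,\beta) \in U(1)\times E_6$ act trivially in the $E_7$-action on the Freudenthal $C$-vector space $\mathfrak{P}^C$, and check that the only relations come from the overlap of the $U(1)$-factor with the center of $E_6$. Granting this, both the containment step and the assembly via the homomorphism theorem are short, routine verifications, so the proposition follows.
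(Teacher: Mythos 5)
Your proposal is correct and follows essentially the same route as the paper: a direct computation showing that any $\alpha\in(E_7)^\iota$ commutes with $\omega^{}_4$ via the embedding $E_7\subset E_8$, followed by the map $\varphi_\iota(\theta,\beta)=\phi(\theta)\beta$ and the citation of \cite[Theorem 4.10.2]{iy0} together with the homomorphism theorem to obtain $(E_7)^\iota\cong(U(1)\times E_6)/\Z_3$. The only cosmetic difference is that in the paper $\phi$ is the explicit map $U(1)\to E_7$, $\phi(\theta)(X,Y,\xi,\eta)=(\theta^{-1}X,\theta Y,\theta^3\xi,\theta^{-3}\eta)$, whose values at cube roots of unity happen to lie in the center of $E_6$, rather than an a priori embedding into $z(E_6)$ as you describe.
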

\begin{proof}
    Immediately, we have $ (\mathfrak{e}_7)^\iota \subset (\mathfrak{e}_8)^{\omega^{}_4} $ from Lemma \ref{lem 7.1}. Moreover since
    both of the groups $ (E_8)^{\omega^{}_4} $ and $ (E_7)^\iota $ are connected, we have $ (E_7)^\iota \subset (E_8)^{\omega^{}_4} $.
    However, we will prove it directly.
    Let $\alpha \in(E_7)^{\iota}$.
    Note that $-1 \in z(E_7)$ (the center of $E_7$), it follows that
    \begin{align*}
        \omega^{}_4 \alpha(\varPhi, P, Q, r, s, t)&=\omega^{}_4
        (\alpha\varPhi\alpha^{-1}, \alpha P, \alpha Q, r, s, t)\\
        &=(\iota \alpha \varPhi \alpha^{-1} \iota^{-1}, -\iota \alpha
        P, -\iota \alpha Q, r, s, t)\\
        &=(\alpha(\iota \varPhi \iota^{-1})\alpha^{-1}, \alpha(-\iota
        P), \alpha(-\iota  Q), r, s, t)\\
        &=\alpha\omega^{}_4 (\varPhi, P, Q, r, s, t),\,\,(\varPhi, P, Q, r, s, t) \in \mathfrak{e}_8^C,
    \end{align*}
    that is, $\omega^{}_4 \alpha=\alpha\omega^{}_4 $.
    Hence we have $ \alpha \in (E_8)^{\omega^{}_4} $, so that the first half is proved.

    As for the proof of the second half, we define a mapping $ \varphi_\iota:U(1)\times E_6 \to (E_7)^\iota $ by
    \begin{align*}
        \varphi_\iota(\theta,\beta)=\phi(\theta)\beta,
    \end{align*}
    where the mapping $ \phi:U(1)=\{\theta \in C \,|\, (\tau \theta)\theta=1\} \to E_7 $ is defined by $ \phi(\theta)(X,Y,\xi,\eta)\allowbreak =(\theta^{-1}X,\theta Y, \theta^3 \xi,\theta^{-3}\eta), (X,Y,\xi,\eta) \in \mathfrak{P}^C $.

    Then the mapping $ \varphi_\iota $ induces the required isomorphism (see \cite[Theorem 4.10.2]{iy0} in detail).
\end{proof}

Let the special unitary group $ SU(2)= \{ A \in M(2, C)\, | \, (\tau\,{}^t\!A)A = E, \det A = 1 \} $ and we define an embedding $ \phi_\upsilon: SU(2) \to E_8 $ by
\begin{align*}
\phi_\upsilon(\begin{pmatrix}
		a & -\tau b \\
		b & \tau a
	\end{pmatrix}) =
\begin{pmatrix}
  1 & 0 & 0 & 0 & 0 & 0
\vspace{1mm}\cr
                   0 & \tau a1 &  b1 & 0 & 0 & 0
\vspace{1mm}\cr
                   0 & -\tau b1 &  a1 & 0 & 0 & 0
\vspace{1mm}\cr
                   0 & 0 & 0 & (\tau a)a - (\tau b)b & (\tau a)(\tau b) & ab
\vspace{1mm}\cr
                   0 & 0 & 0 & -2(\tau a)b & (\tau a)^2 & -b^2
\vspace{1mm}\cr
                   0 & 0 & 0 & 2a(\tau b) & -(\tau b)^2 & a^2
\end{pmatrix}.
\end{align*}

\begin{prop}{\cite[Theorem 5.7.4]{iy0}}\label{prop 7.3}
	The group $(E_8)^{\omega_4}$ contains a subgroup
	$$
	\phi_\upsilon(SU(2)) = \{ \phi_\upsilon(A) \in E_8 \, | \,
	A \in SU(2) \}
	$$
	which is isomorphic to the group $SU(2) = \{ A \in M(2, C)
	\, | \, (\tau\,{}^t\!A)A = E, \det A = 1 \}$.
\end{prop}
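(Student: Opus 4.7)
My plan is to split the statement into two pieces. The first piece --- that $\phi_\upsilon$ is a well-defined injective group homomorphism from $SU(2)$ into $E_8$, so its image $\phi_\upsilon(SU(2))$ is a subgroup of $E_8$ isomorphic to $SU(2)$ --- is exactly the content of the cited \cite[Theorem 5.7.4]{iy0}; injectivity is in any case immediate, since the $2\times 2$ block in rows/columns $2,3$ of $\phi_\upsilon(A)$ already recovers $A$. The genuinely new content is therefore the containment $\phi_\upsilon(SU(2)) \subset (E_8)^{\omega^{}_4}$, and this is where I would concentrate.

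For that containment, I would fix $A = \begin{pmatrix} a & -\tau b \\ b & \tau a \end{pmatrix} \in SU(2)$ and an arbitrary element $(\varPhi, P, Q, r, s, t) \in {\mathfrak{e}_8}^C$, and compare $\omega^{}_4\,\phi_\upsilon(A)(\varPhi, P, Q, r, s, t)$ with $\phi_\upsilon(A)\,\omega^{}_4(\varPhi, P, Q, r, s, t)$ block by block. The crucial observation is that both operators are block-diagonal with respect to the decomposition $(\varPhi) \oplus (P,Q) \oplus (r,s,t)$: from the definition of $\omega^{}_4$ given at the start of the section, $\omega^{}_4$ acts as conjugation by $\iota$ on $\varPhi$, as multiplication by $-\iota$ applied uniformly to the pair $(P,Q)$, and as the identity on $(r,s,t)$; while from the matrix definition just given, $\phi_\upsilon(A)$ acts as the identity on $\varPhi$, as the $C$-scalar matrix $\begin{pmatrix} \tau a & b \\ -\tau b & a \end{pmatrix}$ applied entrywise to $(P,Q)$, and as the displayed $3\times 3$ block in $a,b$ on $(r,s,t)$. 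In the $\varPhi$-block and in the $(r,s,t)$-block commutativity is trivial, since in each case one of the two operators is the identity; in the $(P,Q)$-block it holds because $-\iota$, being $C$-linear and applied uniformly, commutes with any $C$-scalar matrix applied componentwise. This gives $\omega^{}_4\,\phi_\upsilon(A) = \phi_\upsilon(A)\,\omega^{}_4$, as required.

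The main obstacle --- if one can call it that --- is bookkeeping: one must keep the $1+2+3$ block-diagonal structure of $\phi_\upsilon(A)$ aligned with that of $\omega^{}_4$, and be clear that the $(P,Q)$-block of $\omega^{}_4$ is the scalar $-\iota$ applied componentwise rather than any genuine $2\times 2$ mixing. Once these two structural remarks are in place, the commutation relation reduces to the triviality that scalar multiplication by elements of $C$ commutes with any $C$-linear operator applied diagonally, and no nontrivial computation is required.
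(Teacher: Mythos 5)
Your argument is correct, but it takes a genuinely different route from the paper. The paper's own proof is Lie-theoretic and one line long: writing $A=\exp\begin{pmatrix} -i\nu & -\tau\varrho \\ \varrho & i\nu \end{pmatrix}$, it identifies $\phi_\upsilon(A)=\exp\bigl(\ad(0,0,0,i\nu,\varrho,-\tau\varrho)\bigr)$ and observes that the generator $(0,0,0,i\nu,\varrho,-\tau\varrho)$ lies in $(\mathfrak{e}_8)^{\omega^{}_4}$ by Lemma \ref{lem 7.1}, so that $\omega^{}_4$ commutes with the exponential; thus the containment is bought entirely from the Lie algebra computation already done, at the cost of invoking the exponential description of $\phi_\upsilon(A)$ (about which the paper itself notes errata in \cite[Theorem 5.7.4]{iy0}). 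You instead verify the relation $\omega^{}_4\,\phi_\upsilon(A)=\phi_\upsilon(A)\,\omega^{}_4$ directly from the displayed $6\times 6$ matrix: both operators are block-diagonal for the splitting ${\mathfrak{e}_8}^C=\mathfrak{e}_7^C\oplus(\mathfrak{P}^C\oplus\mathfrak{P}^C)\oplus(C\oplus C\oplus C)$, with $\phi_\upsilon(A)$ acting as the identity on the $\varPhi$-block and $\omega^{}_4$ as the identity on the $(r,s,t)$-block, so the only nontrivial block is $(P,Q)$, where the $C$-linearity of $\iota$ makes the scalar $2\times 2$ matrix $\begin{pmatrix} \tau a & b \\ -\tau b & a \end{pmatrix}$ commute with $-\iota$ applied componentwise. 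This is more elementary (no exponential, no Lemma \ref{lem 7.1}) and has the virtue of working for every $A\in SU(2)$ without appealing to surjectivity of $\exp$ on $SU(2)$; what it relies on instead is the displayed matrix being the actual action of $\phi_\upsilon(A)$ on ${\mathfrak{e}_8}^C$, together with the cited theorem for the facts that $\phi_\upsilon$ is an injective homomorphism into $E_8$ — the same division of labour the proposition's citation already makes, and your remark that $A$ is recovered from the $(P,Q)$-block entries settles injectivity in any case.
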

\begin{proof}
	For $A = \begin{pmatrix}
		a & -\tau b \\
		b & \tau a
	\end{pmatrix}:=
	\exp\begin{pmatrix}
		- i\nu & - \tau\varrho \\
		\varrho & i\nu
	\end{pmatrix} \in SU(2)$, where $ \begin{pmatrix}
		- i\nu & - \tau\varrho \\
		\varrho & i\nu
	\end{pmatrix}\in \mathfrak{su}(2) $, we have $\phi_\upsilon(A) = {\rm
		\exp}({\rm ad}(0, 0, 0, i\nu, $ $\varrho, -\tau\varrho))
	\in
	(E_8)^{\omega_4}$ (Lemma \ref{lem 7.1}).
\end{proof}
\noindent Note that there are some errata in definition of $ \varphi_3(A) $ of \cite[Theorem 5.7.4]{iy0}.

Here we need the following result in the proof of Theorem \ref{thm 7.5} below.

\begin{thm}{\rm (\cite[Theorem 5.7.6]{iy0})}\label{thm 7.4}
   The group $ (E_8)^\upsilon $ is isomorphic to the group $ (SU(2) \times E_7)/\Z_2, \Z_2=\{(E,1),(-E,-1)\} ${\rm :} $ (E_8)^\upsilon \cong (SU(2) \times E_7)/\Z_2 $.
\end{thm}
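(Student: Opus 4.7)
The natural plan is to build a homomorphism
\[
\varphi_\upsilon : SU(2) \times E_7 \longrightarrow (E_8)^\upsilon,
\qquad (A,\beta) \longmapsto \phi_\upsilon(A)\,\beta,
\]
and then invoke the homomorphism theorem together with a connectedness/dimension argument. The preparatory facts we already have in hand are Proposition \ref{prop 7.3}, which places $\phi_\upsilon(SU(2))$ inside $(E_8)^{\omega_4} \subset (E_8)^\upsilon$ (using $\upsilon=(\omega^{}_4)^2$), and the standard embedding $E_7 \subset E_8$ via $\beta(\varPhi,P,Q,r,s,t)=(\beta\varPhi\beta^{-1},\beta P,\beta Q,r,s,t)$.

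First I would verify that $\varphi_\upsilon$ is well defined and a homomorphism. For well-definedness, one checks that any $\beta\in E_7$ commutes with $\upsilon$; this is essentially because $\upsilon=\exp\!\bigl(\tfrac{2\pi i}{4}\ad(\varPhi(0,0,0,6),0,0,0,0,0)\bigr)$ is generated by an element that lies in the $\mathfrak{e}_7$-summand, so $\Ad(\beta)$ fixes $\upsilon$. For the homomorphism property the point is that $\phi_\upsilon(A)$ commutes with every $\beta\in E_7$: looking at the block form of $\phi_\upsilon(A)$ in the statement of Proposition \ref{prop 7.3}, $\phi_\upsilon(A)$ acts trivially on the $\varPhi$-, $P$- and $Q$-slots, while $\beta$ acts trivially on the $(r,s,t)$-slots, so the two commute slot by slot.

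Next I would compute $\Ker\varphi_\upsilon$. Suppose $\phi_\upsilon(A)\beta=1$, equivalently $\beta=\phi_\upsilon(A)^{-1}$. Applying both sides to an element of the form $(\varPhi,0,0,0,0,0)$ and then to $(0,0,0,0,s,0)$ and using the explicit matrix for $\phi_\upsilon(A)$, one is forced into $A=\pm E$, and then $\beta=\phi_\upsilon(\pm E)^{-1}=\pm 1\in z(E_7)$. This yields
\[
\Ker\varphi_\upsilon=\{(E,1),\,(-E,-1)\}=\Z_2.
\]

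Finally I would establish surjectivity. The Lie algebra of $(E_8)^\upsilon$ can be computed in the same style as Lemma \ref{lem 7.1}: the condition $\upsilon R=R$ for $R=(\varPhi,P,Q,r,s,t)\in\mathfrak{e}_8$ forces $P=Q=0$ and $r\in i\R$, $s\in C$, $t=-\tau s$, with $\varPhi\in\mathfrak{e}_7$ arbitrary, giving
\[
\dim(\mathfrak{e}_8)^\upsilon=133+1+2=136=\dim\mathfrak{su}(2)+\dim\mathfrak{e}_7,
\]
so $d\varphi_\upsilon$ is onto and $\varphi_\upsilon$ has open image. Combined with compactness of $SU(2)\times E_7$ (closed image) and the connectedness of $(E_8)^\upsilon$, which follows from the fact that $\upsilon$ is an inner involution whose centralizer at the Lie algebra level is the connected semisimple-times-compact piece $\mathfrak{su}(2)\oplus\mathfrak{e}_7$ on a simply connected $E_8$, we conclude $\varphi_\upsilon$ is surjective. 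The homomorphism theorem then gives the desired isomorphism $(E_8)^\upsilon\cong(SU(2)\times E_7)/\Z_2$. The step I expect to be the most delicate is the connectedness of $(E_8)^\upsilon$; once that is in place, the kernel computation and the homomorphism property are routine verifications inside the Freudenthal $C$-vector space.
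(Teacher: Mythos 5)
Your proposal takes essentially the same route as the paper: the paper's proof of Theorem \ref{thm 7.4} just introduces the map $\varphi(A,\delta)=\phi_\upsilon(A)\delta$ and defers to \cite[Theorem 5.7.6]{iy0}, and your kernel computation $\{(E,1),(-E,-1)\}$ together with the dimension-plus-connectedness surjectivity argument is the standard way that isomorphism is obtained.

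Two of your auxiliary justifications are incorrect as stated, though both are easily repaired and do not affect the structure of the argument. First, $E_7\subset(E_8)^\upsilon$ does not follow merely from $\upsilon$ being ``generated by an element of the $\mathfrak{e}_7$-summand'' (exponentials of non-central elements of $\mathfrak{e}_7$ are not fixed by $\mathrm{Ad}(\beta)$ for all $\beta\in E_7$); the correct reason is that $\upsilon$ acts by $(\varPhi,P,Q,r,s,t)\mapsto(\varPhi,-P,-Q,r,s,t)$, i.e.\ $\upsilon=\phi_\upsilon(-E)$ is exactly the central element $-1\in z(E_7)$ viewed in $E_8$, so every $\beta\in E_7$ commutes with it. Second, $\phi_\upsilon(A)$ does \emph{not} act trivially on the $P$- and $Q$-slots: by the displayed matrix it sends the pair $(P,Q)$ to $(\tau a\,P+bQ,\,-\tau b\,P+aQ)$. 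Commutation with $\beta\in E_7$ still holds, because $\beta$ is $C$-linear and acts diagonally on $(P,Q)$ and trivially on $(r,s,t)$, while $\phi_\upsilon(A)$ acts by scalar coefficients on $(P,Q)$ and trivially on the $\varPhi$-slot; that is the correct slot-by-slot argument. With these repairs your kernel computation goes through, and for surjectivity your Lie algebra count $\dim(\mathfrak{e}_8)^\upsilon=133+1+2=136$ agrees with Lemma \ref{lem 9.4} (1); note that the connectedness of $(E_8)^\upsilon$, which you rightly flag as the delicate point, is obtained in this series of papers from the theorem of Ra\v{s}evskii (\cite{ra}) on fixed-point subgroups of automorphisms of simply connected groups, rather than from the heuristic you sketch.
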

\begin{proof}
   We define a mapping $ \varphi:SU(2) \times E_7 \to (E_8)^\upsilon $ by
   \begin{align*}
      \varphi(A,\delta)=\phi_\upsilon(A)\delta.
   \end{align*}

Then the mapping $ \varphi $ induces the required isomorphism.
\end{proof}

Now, we determine the structure of the group $(E_8)^{\omega_4}$.

\begin{thm}\label{thm 7.5}
    The group $(E_8)^{\omega_4}$ is isomorphic to the group $(SU(2)
    \times U(1) \times E_6)/(\Z_2 \allowbreak \times \Z_3),
    \Z_2=\{(E,1,1),(-E, -1, 1) \} ,
    \Z_3=\{(E,1,1), (E,\omega, \phi(\omega^2)), (E, \omega^2, \phi(\omega)) \}${\rm :} \allowbreak $(E_8)^{\omega_4} \cong  (SU(2)
    \times U(1) \times E_6)/(\Z_2 \times \Z_3)$.
\end{thm}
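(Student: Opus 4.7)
The plan is to construct a surjective group homomorphism
\[
\varphi \colon SU(2) \times U(1) \times E_6 \longrightarrow (E_8)^{\omega_4}, \qquad \varphi(A,\theta,\beta)=\phi_\upsilon(A)\phi(\theta)\beta,
\]
and apply the homomorphism theorem after computing $\ker\varphi$. The image indeed lies in $(E_8)^{\omega_4}$ because $\phi_\upsilon(A)\in(E_8)^{\omega_4}$ by Proposition \ref{prop 7.3} and $\phi(\theta)\beta\in(E_7)^\iota\subset(E_8)^{\omega_4}$ by Proposition \ref{prop 7.2}. Multiplicativity of $\varphi$ reduces to the elementwise commutativity $\phi_\upsilon(A)\delta=\delta\phi_\upsilon(A)$ for $A\in SU(2)$ and $\delta\in E_7$, which is precisely the content of the product decomposition $(E_8)^\upsilon\cong(SU(2)\times E_7)/\Z_2$ of Theorem \ref{thm 7.4}.

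Next I would determine $\ker\varphi$. If $\phi_\upsilon(A)\phi(\theta)\beta=1$, then $\phi_\upsilon(A)=(\phi(\theta)\beta)^{-1}\in E_7$, and Theorem \ref{thm 7.4} forces $A=\pm E$ with $\phi_\upsilon(\pm E)=\pm 1\in E_7$. The branch $A=E$ yields $\phi(\theta)\beta=1$ in $(E_7)^\iota$, whose kernel via Proposition \ref{prop 7.2} is precisely the stated $\Z_3$. The branch $A=-E$ requires $\phi(\theta)\beta=-1$; since the definition of $\phi$ shows that $\phi(-1)$ acts as the scalar $-1$ on $\mathfrak{P}^C$, hence coincides with $-1\in E_7$, the solutions of this equation form the coset $(-1,1)\cdot\Z_3$, giving three further kernel elements whose canonical representative is $(-E,-1,1)$. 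The six resulting elements assemble into the internal direct product $\Z_2\times\Z_3$ with generators as in the statement.

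Finally, I would establish surjectivity by a dimension-and-connectedness argument. Since $\omega^{}_4=\exp((2\pi i/4)\ad(\varPhi(0,0,0,9),0,0,0,0,0))$ lies in a one-parameter subgroup, and hence in a maximal torus, of the simply connected compact group $E_8$, its centralizer $(E_8)^{\omega_4}$ is connected by Borel's theorem. Combined with $\dim(E_8)^{\omega_4}=82=3+1+78$ from Lemma \ref{lem 7.1}, the image of $\varphi$ is a closed connected subgroup of full dimension, hence all of $(E_8)^{\omega_4}$, and the homomorphism theorem delivers the asserted isomorphism. The main delicate step I anticipate is the bookkeeping for $\ker\varphi$ in the branch $A=-E$: both the identification $\phi(-1)=-1$ in $E_7$ and the verification that the two coset branches fuse into a genuine direct product $\Z_2\times\Z_3$ rather than a cyclic group of order six will require care, whereas the surjectivity argument is standard once connectedness of the centralizer is invoked.
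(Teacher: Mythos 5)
Your proposal is correct, and up to surjectivity it coincides with the paper's proof: the same map $\varphi_{{}_{\omega_4}}(A,\theta,\beta)=\phi_\upsilon(A)\varphi_\iota(\theta,\beta)$ (your $\phi(\theta)\beta$ is exactly $\varphi_\iota(\theta,\beta)$), the same appeal to Propositions \ref{prop 7.2} and \ref{prop 7.3} for well-definedness, and the same kernel computation splitting into the branches $A=E$, $\varphi_\iota(\theta,\beta)=1$ and $A=-E$, $\varphi_\iota(\theta,\beta)=-1$, the latter handled via $\phi(-1)=-1$ exactly as in the paper. Where you genuinely diverge is surjectivity: the paper argues directly, using $(\omega_4)^2=\upsilon$ to get $(E_8)^{\omega_4}\subset(E_8)^\upsilon$, writing $\alpha=\varphi(A,\delta)$ by Theorem \ref{thm 7.4}, and then using $\Ker\varphi=\{(E,1),(-E,-1)\}$ and the relation $\varphi(A,\iota\delta\iota^{-1})=\varphi(A,\delta)$ to force $\delta\in(E_7)^\iota$, whence $\delta=\varphi_\iota(\theta,\beta)$; you instead invoke connectedness of the centralizer $(E_8)^{\omega_4}$ (true, and in the spirit of the Ra\v{s}evskii-type connectedness result the paper itself uses in Sections 8 and 9, and already asserts in Proposition \ref{prop 7.2}) together with the dimension count $\dim((\mathfrak{e}_8)^{\omega_4})=82$ from Lemma \ref{lem 7.1}, so that the compact, hence closed, image of full dimension must exhaust the connected group. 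Both routes are sound; yours is shorter and avoids the case analysis $\iota\delta\iota^{-1}=\pm\delta$, at the cost of leaning on the general connectedness theorem and the dimension computation, whereas the paper's argument is more elementary and stays entirely inside the explicit $(E_8)^\upsilon\cong(SU(2)\times E_7)/\Z_2$ decomposition. One small remark: your anticipated difficulty about the kernel being $\Z_2\times\Z_3$ \emph{rather than} cyclic of order six is vacuous, since $\Z_2\times\Z_3$ is itself cyclic of order six; what the theorem requires, and what you correctly exhibit, is just the presentation of the six kernel elements as the product of the two stated subgroups.
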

\begin{proof}
    We define a mapping $\varphi_{{}_{\omega^{}_4}}: SU(2) \times U(1)
    \times E_6 \to (E_8)^{\omega^{}_4}$ by
    \begin{align*}
      \varphi_{{}_{\omega^{}_4}}(A, \theta,
      \beta)=\phi_\upsilon(A)\varphi_\iota(\theta,\beta)(=\varphi(A,\varphi_\iota(\theta,\beta))),
    \end{align*}
where $ \varphi $ is defined in the proof of Theorem \ref{thm 7.4}.

First, it is clear that $ \varphi_{{}_{\omega^{}_4}} $ is well-defined from Propositions \ref{prop 7.2}, \ref{prop 7.3} and since the mapping $ \varphi_{{}_{\omega^{}_4}} $ is the restriction of the mapping $ \varphi:SU(2)\times E_7 \to (E_8)^\upsilon $, $\varphi_{{}_{\omega^{}_4}}$ is a homomorphism.


    Next, we will prove that $\varphi_{{}_{\omega^{}_4}}$ is surjective. Let $\alpha \in (E_8)^{\omega^{}_4}$. From $(\omega^{}_4)^2=\upsilon$, we easily see
    $ (E_8)^{\omega^{}_4} \subset (E_8)^\upsilon $. Hence there exist
    $A \in SU(2)$ and $\delta \in E_7$ such that
    $\alpha=\varphi(A, \delta)$ (Theorem \ref{thm 7.4}).
    Moreover, from the condition
    $\omega^{}_4 \alpha {\omega^{}_4}^{-1}=\alpha $, that is, $\omega^{}_4
    \varphi(A, \delta) {\omega^{}_4}^{-1}=\varphi(A, \delta)$,
    we have $\varphi(A, \iota \delta \iota^{-1})=\varphi(A,
    \delta)$.
    Indeed,
note that the formula $\omega^{}_4 \delta{\omega^{}_4}^{-1}=\iota\delta\iota^{-1}$ follows from $\delta \in E_7$, so that it follows from
$ \omega^{}_4 \phi_\upsilon(A) {\omega^{}_4}^{-1}=\phi_\upsilon (A) $ (Proposition \ref{prop 7.3}) that
    \begin{align*}
    \omega^{}_4 \varphi(A, \delta)
    {\omega^{}_4}^{-1}&=\omega^{}_4 (\phi_\upsilon(A)\delta
    ){\omega^{}_4}^{-1}\\
    &=(\omega^{}_4 \phi_\upsilon(A){\omega^{}_4}^{-1})(\omega^{}_4
    \delta{\omega^{}_4}^{-1})\\
    &=\phi_\upsilon(A)(\iota\delta\iota^{-1})\\
    &=\varphi(A, \iota\delta \iota^{-1} ),
    \end{align*}
    that is,  $\varphi(A, \iota \delta \iota^{-1})=\varphi(A, \delta)$.

    Thus, since $ \Ker\varphi=\{(E,1),(-E,-1)\} $ (Theorem \ref{thm 7.4}), we have the following
    $$
    \left\{\begin{array}{l}
    A = A
    \vspace{1mm}\\
    \iota\delta \iota^{-1}= \delta
    \end{array} \right.
    \qquad   \text{or}\qquad
    \left\{\begin{array}{l}
    A= -A
    \vspace{1mm}\\
    \iota\delta \iota^{-1} = -\delta.
    \end{array} \right.
    $$
    In the latter case, this case is impossible because of $A
    \not=O$, where $ O $ is the zero matrix.

    \noindent In the former case, $ \delta \in (E_7)^\iota $ follows from the second condition, so that there exist $\theta \in U(1)$ and $\beta \in E_6$ such that $\delta=\varphi_\iota(\theta,\beta)$ (Proposition \ref{prop 7.2}).
    Hence there exist $ A \in SU(2), \theta \in U(1) $ and $ \beta \in E_6 $  such that $ \alpha=\varphi(A,\varphi_\iota(\theta,\beta))=\varphi_{{}_{\omega^{}_4}}(A,\theta,\beta) $. The proof of surjective is completed.

    Finally, we will determine $ \Ker\,\varphi_{{}_{\omega^{}_4}} $.
    From the definition of kernel, we have
    \begin{align*}
    \Ker\,\varphi_{{}_{\omega^{}_4}}&=\{(A, \theta, \beta) \in
    SU(2) \times U(1) \times E_6 \,|\,
    \varphi_{\omega_4}(A,\theta, \beta)=1  \}\\
    &=\{(A, \theta, \beta) \in SU(2)  \times U(1) \times E_6
    \,|\, \varphi(A,\varphi_\iota(\theta,\beta))=1 \}.
    \end{align*}
    Here, the mapping $\varphi_{{}_{\omega^{}_4}}$ is the
    restriction of the mapping $\varphi $ and together with $ \Ker\,\varphi=\{(E,1), (-E, -1) \}$ (Theorem \ref{thm 7.4}), we will find the elements $ (A,\theta,\beta) \in SU(2) \times U(1) \times E_6 $ satisfying the
    following
    $$
    \left\{\begin{array}{l}
    A = E
    \vspace{1mm}\\
    \varphi_\iota(\theta,\beta) = 1
    \end{array} \right.
    \qquad   \text{or}\qquad
    \left\{\begin{array}{l}
    A= -E
    \vspace{1mm}\\
    \varphi_\iota(\theta,\beta)= -1.
    \end{array} \right.
    $$
    In the former case, from $ \Ker\,\varphi_\iota=\{(1,1),(\omega,
    \phi(\omega^2)),(\omega^2, \phi(\omega))\}$ (Proposition \ref{prop 7.2}), we have the following
    $$
    \left\{\begin{array}{l}
    A = E
    \vspace{1mm}\\
    \theta=1
    \vspace{1mm}\\
    \beta=1,
    \end{array} \right.  \qquad
    \left\{\begin{array}{l}
    A = E
    \vspace{1mm}\\
    \theta=\omega
    \vspace{1mm}\\
    \beta=\phi(\omega^2)
    \end{array} \right.
    \qquad   \text{or}    \qquad
    \left\{\begin{array}{l}
    A = E
    \vspace{1mm}\\
    \theta=\omega^2
    \vspace{1mm}\\
    \beta=\phi(\omega).
    \end{array} \right.
    $$
    In the latter case, the second
    condition can be rewritten as $\phi(-\theta)\beta= 1$ from $-1=\phi(-1)$. Hence, as in the former case, we have the following
    $$
    \left\{\begin{array}{l}
    A = -E
    \vspace{1mm}\\
    \theta=-1
    \vspace{1mm}\\
    \beta=1,
    \end{array} \right.  \qquad
    \left\{\begin{array}{l}
    A = -E
    \vspace{1mm}\\
    \theta=-\omega
    \vspace{1mm}\\
    \beta=\phi(\omega^2)
    \end{array} \right.
    \qquad   \text{or}    \qquad
    \left\{\begin{array}{l}
    A =- E
    \vspace{1mm}\\
    \theta=-\omega^2
    \vspace{1mm}\\
    \beta=\phi(\omega).
    \end{array} \right.
    $$
    Hence we can obtain
    \begin{align*}
    \Ker\,\varphi_{{}_{\omega^{}_4}}&=\left\lbrace
    \begin{array}{l}
    (E,1,1), (E,\omega, \phi(\omega^2)), (E, \omega^2 ,\phi(\omega)), \\
    (-E,-1,1),(-E,-\omega, \phi(\omega^2)),  (-E, -\omega^2 ,\phi(\omega))
    \end{array} \right\rbrace
    \\
    &=\{(E,1,1),(-E, -1, 1) \} \times \{(E,1,1), (E,\omega, \phi(\omega^2)), (E, \omega^2 ,\phi(\omega)) \}\\
    & \cong \Z_2 \times \Z_3.
    \end{align*}

    Therefore we have the required isomorphism
\begin{align*}
    (E_8)^{\omega^{}_4}
    \cong  (SU(2) \times U(1) \times E_6)/(\Z_2 \times \Z_3)
\end{align*}
\end{proof}


\section{Case 6. The automorphism $\tilde{\kappa}^{}_4$ of order
four and the group $(E_8)^{{}_{\kappa^{}_4}}$}

In this section, our aim is to prove main theorem: $(E_8)^{\kappa^{}_4}
\cong (U(1) \times Spin(14))/\Z_4$. In
\cite[Theorem 5.10]{miya0}, the author has proved that the
group $({E_8}^C)^{\kappa^{}_3}$ is isomorphic to the group $(C^*
\times Spin(14,C))/\Z_4$ using the
automorphism ${\tilde{\kappa}^{}_3}$ of order three on ${E_8}^C$: $({E_8}
^C)^{\kappa^{}_3} \cong (C^* \times Spin(14,C))/\Z_4$, where the group $ ({E_8}^C)^{\kappa^{}_3} $ is defined as $ ({E_8}^C)_0 $ in \cite[Theorem 5.10]{miya0} and $ \tilde{\kappa}^{}_3,\tilde{\kappa}^{}_4 $ are defined later. As in the proof of \cite[Theorem 5.10]{miya0}, using the
automorphism ${\tilde{\kappa}^{}_4}$ of order four on ${E_8}^C$, we first will prove that the group $({E_8}^C)^{\kappa^{}_4}$ is isomorphic to the group $(C^* \times Spin(14,C))/\Z_4$ in Subsection $ 8.1 $: $ ({E_8}^C)^{\kappa^{}_4} \cong (C^* \times Spin(14,C))/\Z_4 $, and using the result above, the structure of the group $ (E_8)^{\kappa^{}_4} $ will be determined in Subsection $ 8.2 $.
We often will use the results obtained and definitions in \cite[Section 5.3 ]{miya0}, then note that we change several signs used in \cite[Section 5.3]{miya0}. For example, $ \kappa^{}_3 $ in \cite[Section 5.3 (p.37)]{miya0} is changed to $ \nu^{}_3 $ in this section.

At the end of the preface of this section, since the content of this section is related to the subalgebra $ \mathfrak{g}_0 $ of the simple graded Lie algebras $ \mathfrak{g} $ of second kind
\begin{align*}
\mathfrak{g}=\mathfrak{g}_{-2}\oplus\mathfrak{g}_{-1}\oplus\mathfrak{g}_{0}\oplus\mathfrak{g}_{1}\oplus\mathfrak{g}_{2},\;\;[\mathfrak{g}_k, \mathfrak{g}_l] \subset \mathfrak{g}_{k+l},
\end{align*}
we will review about that. In a simple graded Lie algebra $ \mathfrak{g} $ of second kind, we know that there exists $ Z \in \mathfrak{g}_0 $, called a characteristic element, such that each $ \mathfrak{g}_k $ is the $ k $-eigenspace of $ \ad\, Z: \mathfrak{g} \to \mathfrak{g} $, so that
\begin{align*}
\mathfrak{g}_0=\left\lbrace X \in \mathfrak{g} \relmiddle{|} (\ad\,Z) X=0 \right\rbrace.
\end{align*}
Here, set $ \tilde{\gamma}_3:=\exp\left(\dfrac{2\pi i}{3}\ad\, Z \right)  $ as a inner automorphism of order three on $ \mathfrak{g} $, then we have
\begin{align*}
\mathfrak{g}_0=\left\lbrace X \in \mathfrak{g} \relmiddle{|} \tilde{\gamma}_3 X=X \right\rbrace=:(\mathfrak{g})^{\gamma_3}.
\end{align*}
Indeed, let $ X \in \mathfrak{g}_0 $. Then it follows from $ (\ad\,Z) X=0 $ that
\begin{align*}
\tilde{\gamma}_3 X&=\left( \exp\left(\dfrac{2\pi i}{3}\ad\, Z \right)\right)X =\left(\sum\limits_{n=0}^{\infty}\dfrac{1}{n!} \left(\dfrac{2\pi i}{3}\ad\, Z \right)^n\right)X=X.
\end{align*}
Hence we have $ X \in (\mathfrak{g})^{\gamma_3} $. Conversely, let $ X \in (\mathfrak{g})^{\gamma_3} $. Then we easily see $ X \in \mathfrak{g}_k $ for some $ k \in \{-2,-1,0,1,2\} $, so that $ (\ad\,Z)X=kX $ holds. Hence it follows from
\begin{align*}
X&=\tilde{\gamma}_3 X=\left( \exp\left(\dfrac{2\pi i}{3}\ad\, Z \right)\right)X=\left(\sum\limits_{n=0}^{\infty}\dfrac{1}{n!} \left(\dfrac{2\pi i}{3}\ad\, Z \right)^n\right)X
\\
&=\left(\sum\limits_{n=0}^{\infty}\dfrac{1}{n!} \left(\dfrac{2\pi i}{3}k \right)^n\right)X=\left( \exp \left( \dfrac{2\pi i}{3} k\right)  \right)X
\end{align*}
that  $ \exp \left( \dfrac{2\pi i}{3} k\right) =1 $. Thus we have $ k=3n, n \in \Z $, however since $ |k| \leq 2 $, we see $ k=0 $, that is, $ X \in \mathfrak{g}_0 $. With above, $ \mathfrak{g}_0=(\mathfrak{g})^{\gamma_3} $ is proved. This result will be useful later.

\subsection{The group $({E_8}^C)^{\kappa^{}_4}$}

We define $C$-linear transformations $\kappa^{}_3$ and $
\kappa^{}_4$ of ${\mathfrak{e}_8}^C$ by
\begin{align*}
 \kappa^{}_3({\varPhi}, P, Q, r, s, t) &=
 (\nu^{}_3{\varPhi}{\nu^{}_3}^{-1},
 \omega^2\nu^{}_3P, \omega\nu^{}_3Q, r, \omega s, \omega^2 t),
\\
\kappa^{}_4({\varPhi}, P, Q, r, s, t) &=
(\nu^{}_4{\varPhi}{\nu^{}_4}^{-1},-i\nu^{}_4 P, i\nu^{}_4Q, r, - s, -t),
\end{align*}
where $ \omega=(-1/2)+(\sqrt{3}/2)i $ and
$\nu^{}_3, \nu^{}_4 \in E_7$ on the right hand side are defined by
\begin{align*}
\nu^{}_3(X, Y, \xi, \eta)
&=(
\begin{pmatrix}
\omega^2 \xi_1         & x_3            & \ov{x}_2 \\
\ov{x}_3 & \omega \xi_2       & \omega x_1\\
x_2         & \omega \ov{x}_1 & \omega \xi_3
\end{pmatrix},
\begin{pmatrix}
\omega \eta_1       & y_3            & \ov{y}_2 \\
\ov{y}_3 & \omega^2 \eta_2       & \omega^2 y_1      \\
y_2         & \omega^2 \ov{y}_1   &\omega^2 \eta_3
\end{pmatrix},
\omega^2 \xi, \omega \eta ),
\\
\nu^{}_4(X, Y, \xi, \eta)
&=(
\begin{pmatrix}
-i \xi_1         & x_3            & \ov{x}_2 \\
\ov{x}_3 & i \xi_2       & i x_1\\
x_2         & i\, \ov{x}_1 & i \xi_3
\end{pmatrix},
\begin{pmatrix}
i \eta_1       & y_3            & \ov{y}_2 \\
\ov{y}_3 & -i \eta_2       & -i y_1      \\
y_2         & -i\, \ov{y}_1   &-i \eta_3
\end{pmatrix},
-i \xi, i \eta ), (X,Y,\xi,\eta) \in \mathfrak{P}^C,
\end{align*}
respectively.
Then $ \kappa^{}_3, \kappa^{}_4 $ can be expressed by
\begin{align*}
\kappa^{}_3=\exp\left( \frac{2\pi i}{3}\ad\,\kappa\right) ,
\;\;
\kappa^{}_4=\exp\left( \frac{2\pi i}{4}\ad\,\kappa\right),
\end{align*}
respectively, where $\kappa:=(\varPhi(-2E_1 \vee E_1, 0,0, -1), 0,0,-1,0,0)
\in {\mathfrak{e}_8}^C, E_1 \vee E_1:=(1/3)(2E_1-E_2-E_3)^\sim \in {\mathfrak{e}_6}^C$. Besides, $ \nu^{}_3, \nu^{}_4 $ can be also expressed by
\begin{align*}
    \nu^{}_3=\exp\left( \frac{2\pi i}{3}\varPhi(-2E_1 \vee E_1, 0,0, -1)\right),
    \;\;
    \nu^{}_4=\exp\left( \frac{2\pi i}{4}\varPhi(-2E_1 \vee E_1 0,0, -1)\right),
\end{align*}
respectively.
Hence it follows from above that $\kappa^{}_3, \kappa^{}_4 \in  E_8 \subset
{E_8}^C$ and $ (\kappa^{}_3)^3=(\kappa^{}_4)^4=1$, so that
$\kappa^{}_3$ induces the inner
automorphism $ \tilde{\kappa}^{}_3 $ of order three on $ E_8 $: $
\tilde{\kappa}^{}_3(\alpha)=\kappa
_3\alpha {\kappa^{}_3}^{-1}, \alpha \in E_8 $ and $ \kappa^{}_4$ induces the  inner automorphism $ \tilde{\kappa}^{}_4 $ of order four on  $ E_8 $: $ \tilde{\kappa}^{}_4(\alpha)=\kappa^{}_4\alpha {\kappa^{}_4}^{-1}, \alpha \in E_8 $, so are on $ {E_8}^C $.

\if0
\begin{center}
    \begin{minipage}[c]{90mm}
        \begin{itembox}[l]{\it Note 1}
            As for $\kappa_4 \in E_7$, we can verify that
            $$
            \kappa_4=\mbox{exp}\frac{\pi i}{2}\varPhi(-2E_1 \vee
            E_1, 0,0, -1).
            $$
            Moreover, set $\tilde{r}_t=(0,0,0,-it, 0,0)$, then we
            obtain that
            $$
            (\mbox{exp}(\tilde{r}_t))({\varPhi}, P, Q, r, u,
            v)=(\varPhi, e^{-it}P, e^{it} Q, r, e^{-2it}u, e^{2it}
            v),
            $$
            and so we have that
            $$
            (\mbox{exp}(\tilde{r}_{\frac{\pi}{2}}))({\varPhi}, P,
            Q, r, u, v)=(\varPhi, -i P, i  Q, r, -u, -v).
            $$
        \end{itembox}
    \end{minipage}
\end{center}
\fi

\vspace{2mm}

Now, we will study the subgroup $({E_8}^C)^{\kappa^{}_4}$ of $ {E_8}^C $:
\begin{align*}
({E_8}^C)^{\kappa^{}_4}=\left\lbrace \alpha \in {E_8}^C \relmiddle{|}
\kappa^{}_4 \alpha=\alpha \kappa^{}_4 \right\rbrace .
\end{align*}


We prove the following lemma needed in the proof of theorem below.

\begin{lem}\label{lem 8.1}
The Lie algebra $({\mathfrak{e}_8}^C)^{\kappa^{}_4}$ of
the group $({E_8}^C)^{\kappa^{}_4}$ coincides with the Lie algebra $({\mathfrak{e}_8}^C)^{\kappa^{}_3}$ of the group $({E_8}^C)^{\kappa^{}_3}${\rm :}$ ({\mathfrak{e}_8}^C)^{\kappa^{}_4}=({\mathfrak{e}_8}^C)^{\kappa^{}_3} $.
\end{lem}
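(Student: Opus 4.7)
The plan is to exploit the fact that $\kappa^{}_3$ and $\kappa^{}_4$ arise from the same characteristic element $\kappa \in {\mathfrak{e}_8}^C$, differing only in the root of unity used:
\begin{align*}
\kappa^{}_j=\exp\!\left(\tfrac{2\pi i}{j}\ad\,\kappa\right),\qquad j=3,4.
\end{align*}
The natural strategy is to diagonalize $\ad\,\kappa$ on ${\mathfrak{e}_8}^C$ via the graded decomposition recalled in the preamble of this section, and to observe that in both cases the fixed-point set reduces to the zero-eigenspace $\mathfrak{g}_0$.

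Concretely, I would first invoke that ${\mathfrak{e}_8}^C$ is a simple graded Lie algebra of second kind with characteristic element $\kappa$:
\begin{align*}
{\mathfrak{e}_8}^C=\mathfrak{g}_{-2}\oplus\mathfrak{g}_{-1}\oplus\mathfrak{g}_{0}\oplus\mathfrak{g}_{1}\oplus\mathfrak{g}_{2},\qquad \mathfrak{g}_k=\left\{X\in{\mathfrak{e}_8}^C\mid(\ad\,\kappa)X=kX\right\}.
\end{align*}
Differentiating the defining equation $\kappa^{}_j\alpha=\alpha\kappa^{}_j$ at the identity (exactly as in Lemma \ref{lem 7.1}), the Lie algebra of $({E_8}^C)^{\kappa^{}_j}$ is identified with $\{R\in{\mathfrak{e}_8}^C\mid\kappa^{}_j R=R\}$ for $j=3,4$, where $\kappa^{}_j$ is viewed as the $C$-linear transformation on ${\mathfrak{e}_8}^C$ via the exponential formula above.

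Next, for $R\in\mathfrak{g}_k$ the series expansion yields
\begin{align*}
\kappa^{}_j R=\left(\sum_{n=0}^{\infty}\frac{1}{n!}\left(\tfrac{2\pi i}{j}\ad\,\kappa\right)^n\right)R=\exp\!\left(\tfrac{2\pi i k}{j}\right)R,
\end{align*}
so the condition $\kappa^{}_j R=R$ on the $k$-th graded component is equivalent to $\exp(2\pi i k/j)=1$, i.e., $j\mid k$. Since $|k|\le 2$, both $j=3$ and $j=4$ force $k=0$, and summing over the eigenspaces produces
\begin{align*}
({\mathfrak{e}_8}^C)^{\kappa^{}_4}=\mathfrak{g}_0=({\mathfrak{e}_8}^C)^{\kappa^{}_3}.
\end{align*}

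There is no serious obstacle in this lemma; the argument is a verbatim repetition of the computation already given in the preamble for $(\mathfrak{g})^{\gamma_3}=\mathfrak{g}_0$, applied this time also to $\kappa^{}_4$. The only subtle point worth a sentence is to note that $4$, like $3$, is strictly larger than the maximal absolute value of the grading index (namely $2$), which is precisely what makes both fixed-point sets collapse onto the same zero-eigenspace.
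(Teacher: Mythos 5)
Your proof is correct and follows essentially the same route as the paper: the paper's proof simply repeats the preamble argument showing $\mathfrak{g}_0=(\mathfrak{g})^{\gamma_3}$ with $\gamma_4$ in place of $\gamma_3$, using the grading by $\ad\,\kappa$ with $|k|\le 2$ so that both $j=3$ and $j=4$ force $k=0$. Your version, which checks the fixed-point condition component-wise on each $\mathfrak{g}_k$ and sums over the eigenspaces, is in fact a slightly cleaner write-up of the same computation.
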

\begin{proof}
By an argument similar to the proof of $ \mathfrak{g}_0=(\mathfrak{g})^{\gamma_3} $ in the beginning of this section,
$ \mathfrak{g}_0=(\mathfrak{g})^{\gamma_4}:=\left\lbrace X \in \mathfrak{g} \relmiddle{|} \tilde{\gamma}_4 X=X \right\rbrace $ is proved, where $ \tilde{\gamma}_4:=\exp\left(\dfrac{2\pi i}{4}\ad\, Z \right)  $. Hence, by replacing $ \mathfrak{g} $ and $ \gamma_3,\gamma_4 $ with $ {\mathfrak{e}_8}^C $ and $ \kappa_3,\kappa_4 $, respectively, we obtain
\begin{align*}
({\mathfrak{e}_8}^C)^{\kappa_4}=\mathfrak{g}_0=({\mathfrak{e}_8}^C)^{\kappa_3},
\end{align*}
where the Lie algebra $ \mathfrak{g}_0 $ above is the same one obtained in \cite[Theorem 5.7]{iy0}.
\end{proof}

\if0
\begin{lem}\label{lemma ///8.1 }
  The Lie algebra $({\mathfrak{e}_8}^C)^{\kappa^{}_4}$ of
  the group $({E_8}^C)^{\kappa^{}_4}$ is given by
  \begin{align*}
  ({\mathfrak{e}_8}^C)^{\kappa^{}_4}&=\left\lbrace \ad(R) \in \Der({\mathfrak{e}_8}^C)\relmiddle{|} \kappa^{}_4\ad(R)=\ad(R)\kappa^{}_4, R \in \mathfrak{e}_8 \right\rbrace
  \\
 &\cong \left\lbrace R \in
  {\mathfrak{e}_8}^C \relmiddle{|} \kappa^{}_4 R=R  \right\rbrace
  \\
  &=\left\lbrace R=(\varPhi, P, Q, r, 0, 0) \relmiddle{|}
  \begin{array}{l}
  \varPhi=\varPhi(D+\tilde{A}(d_1)+(\tau_1 E_1+\tau_1 E_2+\tau_1
  E_3
  \\
  \qquad +F_1(t_1))^\sim, \alpha_2 E_2+\alpha_3 E_3+F_1(a_1),
  \\
  \hspace{26mm}  \beta_2 E_2+\beta_3 E_3+F_1(b_1), \nu),
  \\
  \qquad\quad D \in \mathfrak{so}(8,C),d_1, t_1 \in \mathfrak{C}^C,
  \\
  \qquad\quad \tau_k \in C, \tau_1+\tau_2+\tau_3=0,
  \\
  \qquad\quad \alpha_k, \beta_k \in C, a_1, b_1 \in \mathfrak{C}^C,
  \nu \in C,
  \vspace{1mm}\\
  P=(\rho_2 E_2+\rho_3 E_3 +F_1(p_1),\rho_1 E_1,0,\rho)\\
  \qquad\quad \rho_k, \rho \in C, p_1 \in \mathfrak{C}^C,
  \vspace{1mm}\\
  Q=(\zeta_1 E_1, \zeta_2 E_2+\zeta_3 E_3 +F_1(z_1),\zeta,0)\\
  \qquad\quad \zeta_k, \zeta \in C, z_1 \in \mathfrak{C}^C,
    \vspace{1mm}\\
    r \in C
    \end{array} \right\rbrace .
    \end{align*}
    In particular, we have $\dim_{{}_C}(({\mathfrak{e}_8}^C)^{\kappa^{}_4})=(28+16+2+20+1)+24+1=92$.

Moreover, the Lie algebra $({\mathfrak{e}_8}^C)^{\kappa^{}_4}$ coincides with the Lie algebra $ ({\mathfrak{e}_8}^C)^{\kappa^{}_3}$ of the group $ ({E_8}^C)^{\kappa^{}_3} $ {\rm:} $ ({\mathfrak{e}_8}^C)^{\kappa^{}_4}=({\mathfrak{e}_8}^C)^{\kappa^{}_3} $.
\end{lem}
\begin{proof}
In order to prove this lemma, we would like to get the details of the action to $ \varPhi \in {\mathfrak{e}_7}^C $ of the $ C $-linear transformation $ \nu^{}_4 $ defined in previous page.
Using the mapping $ \phi $ defined in \cite[Subsection 3.10 (p.89)]{iy0}, we define a $ C $-linear transformation $ \delta^{}_4 $ by $ \delta^{}_4:=\phi(i) $, so
the explicit form of $ \delta^{}_4:\mathfrak{J}^C \to \mathfrak{J}^C $ is given by
\begin{align*}
     \delta^{}_4 X=
     \begin{pmatrix}
       \xi_1 & i x_3 & i\ov{x}_2 \\
       i\ov{x}_3 & -\xi_2 & -x_1 \\
       ix_2 & -\ov{x}_1 & -\xi_3
     \end{pmatrix}.
 \end{align*}
Then we have $ \delta^{}_4 \in E_6 $ and $ {\delta^{}_4}^2=\sigma,\delta^{}_4\tau=\tau{\delta^{}_4}^{-1}, {}^t\delta^{}_4=\delta^{}_4 $, and the action of $ \nu^{}_4 $ to $ \mathfrak{P}^C $ is given by
 \begin{align*}
     \nu^{}_4(X,Y,\xi,\eta)=(-i\delta^{}_4 X,i{\delta^{}_4}^{-1}Y,-i \xi,i\eta),\;\;(X,Y,\xi,\eta) \in \mathfrak{P}^C.
 \end{align*}
Hence it follows from above that
 \begin{align*}
   \nu^{}_4\varPhi{\nu^{}_4}^{-1}(X,Y,\xi,\eta)&=\nu^{}_4\varPhi(\phi,A,B,\nu)(i{\delta^{}_4}^{-1}X,-i\delta^{}_4 Y,i\xi,-i\eta)
   \\
   &=\nu^{}_4\left(
   \begin{array}{c}
     \phi(i{\delta^{}_4}^{-1}X)-(1/3)\nu(i{\delta^{}_4}^{-1}X) +2B\times (-i{\delta^{}_4} Y)+(-i\eta)A
     \\
     2A\times (i{\delta^{}_4}^{-1}X)-{}^t\phi(-i{\delta^{}_4} Y)+(1/3)\nu(-i\delta^{}_4 Y)+(i\xi)B
     \\
     (A,-i\delta^{}_4 Y)+\nu(i\xi)
     \\
      (B,i{\delta^{}_4}^{-1}X)-\nu(-i\eta)
   \end{array}\right)
   \\
   &=\left(
   \begin{array}{c}
      (\delta\phi{\delta^{}_4}^{-1})X-(1/3)\nu X +2(-{\delta^{}_4}^{-1}B)\times Y+ \eta(-\delta^{}_4 A)
      \\
      2(-{\delta^{}_4} A)\times X-{}^t(\delta^{}_4\phi{\delta^{}_4}^{-1})Y+(1/3)\nu Y+\xi(-{\delta^{}_4}^{-1}B)
      \\
      (-\delta^{}_4 A,Y)+\nu\xi
      \\
      (-{\delta^{}_4}^{-1}B,X)-\nu\eta
   \end{array}\right)
   \\
   &=\varPhi(\delta^{}_4\phi{\delta^{}_4}^{-1},-\delta^{}_4 A,-{\delta^{}_4}^{-1}B,\nu)(X,Y,\xi,\eta).
  \end{align*}
Hence the action of $ \nu^{}_4 $ to $ \varPhi $ is obtained as follows:
\begin{align*}
    \nu^{}_4\varPhi(\phi,A,B,\nu){\nu^{}_4}^{-1}=\varPhi(\delta^{}_4\phi{\delta^{}_4}^{-1},-\delta^{}_4 A,-{\delta^{}_4}^{-1}B,\nu).
\end{align*}
Moreover, as for $ \delta^{}_4\phi{\delta^{}_4}^{-1} $, set $ \phi:=D+\tilde{A}_1(d_1)+\tilde{A}_2(d_2)+\tilde{A}_3(d_3)+(\tau_1E_1+\tau_2E_2+\tau_3E_3+F_1(t_1)+F_2(t_2)+F_3(t_3))^\sim \in {\mathfrak{e}_6}^C$, then the action of $ \delta^{}_4 $ to $ \phi $ is obtained as follows:
\begin{align*}
    \quad \delta^{}_4\phi{\delta^{}_4}^{-1}
    &=D+\tilde{A}_1(d_1)+\tilde{A}_2(it_2)+\tilde{A}_3(-it_3)
    \\
    &\hspace{8.5mm}+(\tau_1E_1+\tau_2E_2+\tau_3E_3+F_1(t_1)+F_2(id_2)+F_3(-id_3))^\sim.
\end{align*}
The remaining computations of $ \kappa^{}_4 R $ are easy, for example $ -i\nu^{}_4 P $.
Thus the structure of the Lie algebra $ ({\mathfrak{e}_8}^C)^{\kappa^{}_4} $ can be determined from the results obtained above as required, so the first half is proved.

As for the second half, as mentioned in the beginning of this section,
since $ \mathfrak{g}_0=(\mathfrak{g})^{\gamma_3} $ in general,
the Lie algebra $ ({\mathfrak{e}_8}^C)^{\kappa^{}_3}$ coincides with the Lie algebra $\mathfrak{g}_0:=\{R \in {\mathfrak{e}_8}^C \,|\,(\ad\,\kappa)R=0\} $ which has been computed in \cite[Theorem 5.7]{miya0}.
Hence we have $ ({\mathfrak{e}_8}^C)^{\kappa^{}_4}=\mathfrak{g}_0=({\mathfrak{e}_8}^C)^{\kappa^{}_3} $.

\if0
As for the second half, we have to determine the structure of the Lie algebra $ ({\mathfrak{e}_8}^C)^{\kappa^{}_3} $ as in $ ({\mathfrak{e}_8}^C)^{\kappa^{}_4} $. We will state its outline below. We first study the action of $ \nu^{}_3 $ to $ \varPhi \in {\mathfrak{e}_7}^C $.
Using the same $ \phi $ used above, we define a $ C $-linear transformation $ \delta^{}_3 $ of $ \mathfrak{J}^C $ by $ \phi(\omega_{{}_9}) $: $ \delta^{}_3=\phi(\omega_{{}_9}) $, where $ \omega_{{}_9}=\cos(4\pi/9)+i\sin(4\pi/9) $, so the explicit form of $ \delta^{}_3:\mathfrak{J}^C \to \mathfrak{J}^C $ is given by
 \begin{align*}
   \delta^{}_3 X=
   \begin{pmatrix}
   {\omega_{{}_9}}^4\xi & \omega_{{}_9}x_3 & \omega_{{}_9}\ov{x}_2
   \\
   \omega_{{}_9}\ov{x}_3 & {\omega_{{}_9}}^{-2}\xi_2 & {\omega_{{}_9}}^{-2}x_1
   \\
   \omega_{{}_9}x_2 & {\omega_{{}_9}}^{-2}\ov{x}_1 & {\omega_{{}_9}}^{-2}\xi_3
   \end{pmatrix}.
 \end{align*}
Then we have $ \delta^{}_3 \in E_6 $ and $ (\delta^{}_3)^3=\omega^2\cdot 1 \in z(E_6)\,(\text{the center of $ E_6 $}), \delta^{}_3\tau=\tau{\delta^{}_3}^{-1}, {}^t\delta^{}_3=\delta^{}_3$, and the action of $ \nu^{}_3 $ to $ \mathfrak{P}^C $ is given by
 \begin{align*}
     \nu^{}_3(X,Y,\xi,\eta)=({\omega_{{}_9}}^{-1}\delta^{}_3X,\omega_{{}_9}{\delta^{}_3}^{-1}Y, \omega^2\xi, \omega \eta),\;\;(X,Y,\xi,\eta) \in \mathfrak{P}^C,
 \end{align*}
where $ \omega=(-1/2)+i(\sqrt{3}/2)=(\omega_{{}_9})^{-3} $ and $ \omega^2=(\omega_{{}_9})^3 $. Hence, for $ \varPhi=\varPhi(\phi,A,B,\nu) \allowbreak \in {\mathfrak{e}_7}^C $, the action of $ \nu^{}_3 $ to $ \varPhi $ is obtained as follows:
\begin{align*}
   \nu^{}_3\varPhi(\phi,A,B,\nu){\nu^{}_3}^{-1}=\varPhi(\delta^{}_3\phi{\delta^{}_3}^{-1},{\omega_{{}_9}}^2\delta^{}_3 A,{\omega_{{}_9}}^{-2}{\delta^{}_3}^{-1}B,\nu).
\end{align*}
Moreover, for $ \phi \in {\mathfrak{e}_6}^C $, the action of $ \delta^{}_3 $ to $ \phi $ is obtained as follows:
\begin{align*}
    \delta^{}_3\phi{\delta^{}_3}^{-1}
    &=D+\tilde{A}_1(d_1)+\tilde{A}_2((1/2)(-d_2+(2\omega+1)t_2))
    \\
    &\hspace{8.5mm}+\tilde{A}_3((1/2)(-d_3-(2\omega+1)t_3))
    \\
    &\hspace{8.5mm}+(\tau_1E_1+\tau_2E_2+\tau_3E_3+F_1(t_1)+F_2((1/2)((2\omega+1)d_2-t_2))
    \\
    &\hspace{8.5mm}+F_3((1/2)(-(2\omega+1)d_3-t_3)))^\sim.
\end{align*}
In addition, by do the remaining computation of $ \kappa^{}_3R $, the structure of the Lie algebra $ ({\mathfrak{e}_8}^C)^{\kappa^{}_3} $ is determined, so that the Lie algebra $({\mathfrak{e}_8}^C)^{\kappa^{}_4}$ coincides with the Lie algebra $({\mathfrak{e}_8}^C)^{\kappa^{}_3}$
With above, the second half is proved.
\fi

The proof of this lemma is completed.
\end{proof}

\fi


\if0

We define two $14$ dimensional $C$-vector subspaces
$({\mathfrak{e}_8}^C)_2, ({\mathfrak{e}_8}^C)_{-2}$ of
${\mathfrak{e}_8}^C$ by
\begin{align*}
    ({\mathfrak{e}_8}^C)_2
    &=\left\lbrace R \in
    {\mathfrak{e}_8}^C \,|\,(\ad\kappa) R=2R \right\rbrace
    \\
    &=\left\lbrace  R=(\varPhi, 0, Q, 0,0, v) \relmiddle{|}
    \begin{array}{l}\varPhi=\varPhi(0,0,\varrho_1 E_1,0), \\
        \qquad\qquad \varrho_1 \in C,\\
        Q=(\xi_2 E_2 + \xi_3 E_3 + F_1(x_1), \eta_1 E_1, 0,
        \eta),\\
        \qquad\qquad \xi_k, \eta_1, \eta \in C, x_1 \in
        \mathfrak{C}^C,\\
        v \in C
    \end{array}
    \right\rbrace ,
    \\[2mm]
    ({\mathfrak{e}_8}^C)_{-2}
    &=\left\lbrace R \in {\mathfrak{e}
    _8}^C \relmiddle{|}(\ad\kappa) R=-2R \right\rbrace
    \\
    &=\left\lbrace  R=(\varPhi, P, 0, 0,u, 0) \relmiddle{|}
    \begin{array}{l}\varPhi=\varPhi(0,\upsilon_1 E_1,0,0), \\
        \qquad\qquad \upsilon_1 \in C,\\
        P=(\xi_1 E_1, \eta_2 E_2 + \eta_3 E_3 + F_1(y_1), \xi, 0),
        \\
        \qquad\qquad \xi_1, \eta_k, \xi \in C, y_1 \in
        \mathfrak{C}
        ^C,\\
        u \in C
    \end{array}
     \right\rbrace .
\end{align*}
Then, the elements of $({\mathfrak{e}_8}^C)_2$ and $
({\mathfrak{e}_8}^C)_{-2}$ are often denoted by $R_2$ and
$R_{-2}$,
respectively.

In order to prove the following result $({E_8}^C)^{\kappa
_4}\cong (C^* \times Spin(14, C))/\Z_4$, we need to find a
subgroup of ${E_8}^C$ which is isomorphic to the group $Spin(14,
C)$. For this purpose, we define two $C$-linear mappings $
\tilde{\mu}_1 : {\mathfrak{e}_8}^C \to {\mathfrak{e}_8}^C$ and $
\delta : ({\mathfrak{e}_8}^C)_2 \to ({\mathfrak{e}_8}^C)_2$ by
\begin{align*}
\tilde{\mu}_1 = \mbox{exp}\frac{\pi i}{2}(\ad\mathcal{U}),
\end{align*}
where $ \mathcal{U} = (\mu, 0, 0, 0, 1, 1) \in {\mathfrak{e}_8}^C,
\,\,\mu = {\varPhi}(0, E_1, E_1, 0) \in {\mathfrak{e}_7}^C $ and
\begin{align*}
\delta(\varPhi(0,0,\varrho_1 E_1,0),0,Q, 0,0,v)
=(\varPhi(0,0,-v E_1,0),0,Q, 0,0,-\varrho_1).
\end{align*}
The action of $\tilde{\mu}_1$ on ${\mathfrak{e}_8}^C$ is given by
$$
\tilde{\mu}_1({\varPhi}, P, Q, r, u, v) =
(\mu_1{\varPhi}{\mu_1}^{-1}, i\mu_1Q, i\mu_1P, -r, v, u),
$$
where the $C$-linear transformation $\mu_1: \mathfrak{P}^C \to
\mathfrak{P}^C$ on the right hand side is defined by
$$
\mu_1(X, Y, \xi, \eta) =
\biggl(\begin{pmatrix}i\eta & x_3 & \overline{x}_2 \cr
\overline{x}_3 & i\eta_3 & - iy_1 \cr
x_2 & - i\overline{y}_1 & i\eta_2
\end{pmatrix},
\begin{pmatrix}i\xi & y_3 & \overline{y}_2 \cr
\overline{y}_3 & i\xi_3 & - ix_1 \cr
y_2 & - i\overline{x}_1 & i\xi_2
\end{pmatrix},
i\eta_1, i\xi_1 \biggr).
$$
\noindent In particular, the explicit form of the mapping
$\tilde{\mu}_1 : ({\mathfrak{e}_8}^C)_{-2} \to
({\mathfrak{e}_8}^C)_2$ is given by
\begin{align*}
&\tilde{\mu}_1(\varPhi(0,\upsilon_1 E_1,0,0),(\xi_1 E_1, \eta_2
E_2 + \eta_3 E_3 + F_1(y_1), \xi, 0),0,0,u,0 )\\
&=(\varPhi(0,0,\upsilon_1 E_1,0),0,(-\eta_3 E_2 -\eta_2 E_3 +
F_1(y_1), -\xi E_1, 0, -\xi_1),0,0,u).
\end{align*}
\noindent The composition mapping $\delta\tilde{\mu}_1 :
({\mathfrak{e}_8}^C)_{-2} \to ({\mathfrak{e}_8}^C)_2$ of
$\tilde{\mu}_1$ and $\delta$ is denoted by
$\tilde{\mu}_{\delta}$  :
\begin{align*}
&\tilde{\mu}_{\delta}(\varPhi(0,\upsilon_1 E_1,0,0),(\xi_1 E_1,
\eta_2 E_2 + \eta_3 E_3 + F_1(y_1), \xi, 0),0,0,u,0 )\\
&=(\varPhi(0,0,-u E_1,0),0,(-\eta_3 E_2 -\eta_2 E_3 + F_1(y_1),
-\xi E_1, 0, -\xi_1),0,0,-\upsilon_1).
\end{align*}

Now, we define the inner product $(R_{-2}, R'_{-2})_{\mu}$ in
$({\mathfrak{e}_8}^C)_{-2}$ by
$$
(R_{-2}, R'_{-2})_{\mu} =
\displaystyle{\frac{1}{30}}B_8(\tilde{\mu}_{\delta}R_{-2},
R'_{-2}),
$$
where $B_8$ is the Killing form of ${\mathfrak{e}_8}^C$.

\noindent The explicit form of $(R_{-2}, R_{-2})_{\mu}$ is
$$
(R_{-2}, R_{-2})_{\mu}= - 4\upsilon_1 u - \eta_2\eta_3 +
y_1\overline{y}_1 + \xi_1\xi,
$$
where $R_{-2}=(\varPhi(0,\upsilon_1 E_1,0,0),(\xi_1 E_1, \eta_2
E_2 + \eta_3 E_3 + F_1(y_1), \xi, 0),0,0,u,0 ) \in
({\mathfrak{e}_8}^C)_{-2}$.
\vspace{2mm}

We define $C$-vector spaces $(V^C)^{14}, (V^C)^{13}$ and
$(V^C)^{12}$ by
\begin{align*}
(V^C)^{14}&=({\mathfrak{e}_8}^C)_{-2}\\
&=\{R=(\varPhi(0,\upsilon_1 E_1,0,0),(\xi_1 E_1, \eta_2 E_2 +
\eta_3 E_3 + F_1(y_1), \xi, 0),0,0,u,0 )  \},\\
(V^C)^{13}&=\{R \in  (V^C)^{14}\,|\,(R,
(\varPhi(0,E_1,0,0),0,0,0,1,0))_\mu=0 \}\\
&=\{R=(\varPhi(0,\upsilon_1 E_1,0,0),(\xi_1 E_1, \eta_2 E_2 +
\eta_3 E_3 + F_1(y_1), \xi, 0),0,0,-\upsilon_1,0 ) \},\\
(V^C)^{12}&=\{P \in \mathfrak{P}^C \,|\,\kappa P=-P  \}\\
&=\{ P=(\xi_1 E_1, \eta_2 E_2 + \eta_3 E_3 + F_1(y_1), \xi, 0) \},
\end{align*}
respectively. $(V^C)^{12}$  will be identified with $\{R =(0, P,
0, 0, 0, 0) \in (V^C)^{14} \, | \, P \in (V^C)^{12} \}$ $\subset
(V^C)^{13}$.
\if0
So an element $P = (\xi_1E_1, \eta_2E_2 + \eta_3E_3 + F_1(y_1),
\xi, 0)$ of $(V^C)^{12}$ is denoted by
$
P = (\xi_1, (\eta_2, \eta_3, y_1), \xi, 0).$
\fi
We define the inner product $(P, P)_{\mu}$ in $\mathfrak{P}^C$ by
$$
(P, P)_{\mu} = \frac{1}{2}\{i\mu_1P, P \}
= - \eta_2\eta_3 + y_1\overline{y}_1 + \xi_1\xi,
$$
which is the restriction of the inner product $(P, P)_{\mu}$ of
$(V^C)^{14}$.
\vspace{2mm}

Now, we define subgroups $(G_{14})^C, (G_{13})^C$ and
$(G_{12})^C$ of ${E_8}^C$ by
\begin{align*}
( G_{14})^C &= \{ \alpha \in {E_8}^C \, | \,
(\mbox{ad}\mathcal{K})\alpha = \alpha
(\mbox{ad}\mathcal{K}),
\tilde{\mu}_{\delta}\alpha R = \alpha\tilde{\mu}_{\delta}R, R
\in (V^C)^{14} \} \\
&=\{ \alpha \in ({E_8}^C)^{\tilde{\kappa}_4} \, | \,
\tilde{\mu}_{\delta}\alpha R = \alpha\tilde{\mu}_{\delta}R, R
\in (V^C)^{14} \}, \\
(G_{13})^C&= \{ \alpha  \in G_{14} \, | \,
\alpha(\varPhi(0,E_1,0,0), 0, 0, 0, 1, 0) =  (\varPhi(0,E_1,0,0),
0, 0, 0, 1, 0) \},\\
(G_{12})^C &= \{ \alpha \in G_{13} \, |\,
\alpha(\varPhi(0,E_1,0,0), 0, 0, 0, - 1, 0) =
(\varPhi(0,E_1,0,0), 0, 0, 0, - 1, 0) \},
\end{align*}
respectively. Note that the group $(G_{14})^C$ leaves the inner
product $(R, R')_{\mu}$ of $(V^C)^{14}$ invariant : $(\alpha R,
\alpha R')_{\mu} = (R, R')_{\mu}, \alpha \in (G_{14})^C, R, R'
\in (V^C)^{14}$.
\begin{center}
    \begin{minipage}[c]{90mm}
        \begin{itembox}[l]{\it Note 2}
            Indeed, for $\alpha \in (G_{14})^C$ and $R, R' \in
            (V^C)^{14}$, we have that
            \begin{align*}
            (\alpha R, \alpha R')_{\mu} &=
            \displaystyle{\frac{1}{30}}B_8(\tilde{\mu}_{\delta}\alpha
             R, \alpha R')\\
            &=\displaystyle{\frac{1}{30}}B_8(\alpha
            \tilde{\mu}_{\delta} R, \alpha
            R')\,(\leftarrow\,\alpha \in
            \mbox{Aut}({\mathfrak{e}_8}^C))\\
            &=\displaystyle{\frac{1}{30}}B_8(
            \tilde{\mu}_{\delta} R,  R')\\
            &=(R, R')_{\mu}
            \end{align*}
        \end{itembox}
    \end{minipage}
\end{center}
\vspace{1mm}


\begin{lem}\label{lem 2.2}
    We have that the group $(G_{12})^C$ is the subgroup of
    ${E_7}^C$
    {\rm :} $(G_{12})^C \subset {E_7}^C$.
\end{lem}
\begin{proof}
    Let $\tilde{1} = (0, 0, 0, 1, 0, 0), 1^- = (0, 0, 0, 0, 1,
    0)$, and $1_- = (0, 0, 0, 0, 0, 1)$ in ${\mathfrak{e}_8}^C$.
    We need to show that $\alpha \in (G_{12})^C$ satisfies
    $$
    \alpha\tilde{1} = \tilde{1}, \quad
    \alpha 1^{-} = 1^{-},\quad
    \alpha 1_{-} = 1_{-}.
    $$
    Since $\alpha$ satisfies the conditions
    $\alpha(\varPhi(0,E_1,0,0), 0, 0, 0, 1, 0) =
    (\varPhi(0,E_1,0,0), 0, 0, 0, 1, 0)$ and
    $\alpha(\varPhi(0,E_1,0,0), 0, 0, 0, - 1, 0) =
    (\varPhi(0,E_1,0,0), 0, 0, 0, - 1, 0)$, we have
    \begin{align*}
    &\alpha(\varPhi(0,E_1,0,0),0,0,0,0,0)=(\varPhi(0,E_1,0,0),
    0,0,0,0,0), \\
    &\alpha(0, 0, 0, 0, 1, 0)=(0, 0, 0, 0, 1, 0),
    \end{align*}
    that is, $ \alpha 1^{-} = 1^{-}$. We shall show the other two
    cases. Now, since $1_{-} \in ({\mathfrak{e}_8}^C)_2$, let
    $\alpha 1_{-} = ({\varPhi}, 0, Q, 0, 0, v) \in
    ({\mathfrak{e}_8}^C)_2$.
    \begin{center}
        \begin{minipage}[c]{100mm}
            \begin{itembox}[l]{\it Note 3}
                The space $({\mathfrak{e}_8}^C)_2$ is invariant
                by the action of the group $(G_{12})^C$.
                \vspace{1mm}

                Indeed, for $R_2 \in ({\mathfrak{e}_8}^C)_2=\{R
                \in {\mathfrak{e}_8}^C
                \,|\,(\mbox{ad}\mathcal{K}) R=2R \}$, we have
                \begin{align*}
                (\mbox{ad}\mathcal{K})(\alpha
                R_2)&=\alpha((\mbox{ad}\mathcal{K})R_2)\\
                &=\alpha(2R_2)\\
                &=2(\alpha R_2),
                \end{align*}
                that is, $\alpha R_2 \in ({\mathfrak{e}_8}^C)_2$.
            \end{itembox}
        \end{minipage}
    \end{center}
    Here, we have that $[\alpha 1_{-}, 1^{-}] = [\alpha 1_{-},
    \alpha 1^{-}] = \alpha[1_{-}, 1^{-}] = \alpha(- \tilde{1})$,
    on the other hand, we have that $[\alpha 1_{-}, 1^{-}] =
    [({\varPhi}, 0, Q, 0, 0, v), (0, 0, 0, 0,$ $1, 0)] = (0, - Q,
    0, - v, 0, 0)$. Hence we have
    $$
    \alpha\tilde{1} = (0, Q, 0, v, 0, 0).
    $$
    Moreover, we see that
    $$[\alpha 1_{-}, (\varPhi_1,0,0,0,0,0)] =  [\alpha 1_{-},
    \alpha(\varPhi_1,0,0,0,0,0)] = \alpha[1_{-},
    (\varPhi_1,0,0,0,0,0)] = \alpha0 = 0,
    $$
    where $\varPhi_1=\varPhi(0,E_1, 0,0)$. On the other hand, we
    have that
    $$
    [\alpha 1_{-}, (\varPhi_1,0,0,0,0,0)] = [({\varPhi}, 0, Q, 0,
    0, v),(\varPhi_1,0,0,0,0,0)] = ([{\varPhi}, {\varPhi}_1], 0,
    - {\varPhi}_1Q, 0, 0. 0).
    $$
    From  $\varPhi = \varPhi(0, 0, \varrho_1E_1, 0)$ and  $Q =
    (\xi_2E_2 + \xi_3E_3 + F_1(x_1), \eta_1E_1,0, \eta)$, we have
    that
    $$
    [\varPhi, \varPhi_1] = \varPhi(-2\varrho_1E_1 \vee E_1, 0, 0,
    - \varrho_1), \quad
    \varPhi_1Q = (\eta E_1, \xi_3E_2 + \xi_2E_3 - F_1(x_1),
    \eta_1, 0).
    $$
    Thus we have $\varPhi = 0, Q = 0$, that is,
    $$
    \alpha 1_{-} = (0, 0, 0, 0, 0, v), \quad
    \alpha\tilde{1} = (0, 0, 0, v, 0, 0).
    $$
    Using $\alpha 1^{-} = 1^{-}$, we have that
    $$
    [\alpha\tilde{1}, 1^{-}] = [\alpha\tilde{1}, \alpha1^{-}] =
    \alpha[\tilde{1}, 1^{-}] = \alpha(0, 0, 0, 0, 2, 0) = (0, 0,
    0, 0, 2, 0),
    $$
    on the other hand, we have that
    $$
    [\alpha\tilde{1}, 1^{-}] = [(0, 0, 0, v,0, 0), (0, 0, 0, 0,
    1, 0)] = (0, 0, 0, 0, 2v, 0).
    $$
    Hence we see $v=1$. Thus we have $\alpha1_{-} = 1_{-},
    \alpha\tilde{1} = \tilde{1}$. Therefore we obtain $(G_{12})^C
    \subset {E_7}^C$.
\end{proof}

\begin{prop}\label{prop 2.3}
    The group $(G_{12})^C$ is isomorphic to $Spin(12, C)${\rm
    :}$(G_{12})^C \cong Spin(12, C)$.
\end{prop}
\begin{proof}
    Let $Spin(12, C) = \{ \alpha \in {E_7}^C \, | \,
    \kappa\alpha = \alpha\kappa, \mu\alpha = \alpha\mu \}$ (see
    \cite[Proposition 4.6.10]{iy2} in detail). We shall
    first show that $(G_{12})^C \subset Spin(12, C)$. Since
    $(G_{12})^C \subset {E_7}^C$ (Lemma \ref{lem 2.2}), it
    sufficient to consider the actions on $\mathfrak{P}^C$. Since
    $\alpha \in (G_{12})^C$ satisfies
    $(\mbox{ad}\mathcal{K})\alpha =
    \alpha(\mbox{ad}\mathcal{K})$, from
    $$
    (\mbox{ad}\mathcal{K})\alpha P = \kappa\alpha P - \alpha P
    \quad \mbox{and}
    \quad
    \alpha(\mbox{ad}\mathcal{K})P = \alpha\kappa P - \alpha P, \,
    P \in \mathfrak{P}^C,
    $$
    we have $\kappa\alpha P=\alpha\kappa P$, that is,
    $\kappa\alpha=\alpha\kappa$. Since $\mbox{exp}(\pi i\kappa) =
    \sigma$, from $\kappa\alpha=\alpha\kappa$, $\alpha$ satisfies
    $\sigma\alpha = \alpha\sigma$. Hence the $C$-vector
    space $\mathfrak{P}^C$ is decomposable in $\alpha$-invariant
    $C$-vector subspaces :
    $$
    \mathfrak{P}^C = (\mathfrak{P}^C)_{\sigma} \oplus
    (\mathfrak{P}^C)_{-\sigma},
    $$
    where
    $
    (\mathfrak{P}^C)_{\sigma} =  \{ P \in \mathfrak{P}^C \, | \,
    \sigma P = P \},
    (\mathfrak{P}^C)_{-\sigma} =  \{ P \in \mathfrak{P}^C \, | \,
    \sigma P = - P \},
    $
    and the mappings $\tilde{\mu}_1$ and $\mu$ are related with
    $$
    \tilde{\mu}_1 = - \mu\,\,\mbox{on}\,\,
    (\mathfrak{P}^C)_{\sigma} \quad
    \tilde{\mu}_1 = i1, \, \mu = 0 \,\,\mbox{on}\,\,
    (\mathfrak{P}^C)_{-\sigma},
    $$
    where this $\mu_1$ is $C$-linear mapping of
    ${\mathfrak{e}_8}^C$.
    \begin{center}
        \begin{minipage}[c]{120mm}
            \begin{itembox}[l]{\it Note 4}
                From
                $$
                \tilde{\mu}_1({\varPhi}, P, Q, r, u, v) =
                (\mu_1{\varPhi}{\mu_1}^{-1}, i\mu_1Q, i\mu_1P,
                -r, v, u),
                $$ as for the part of $\mathfrak{P}^C$ we have
                $$
                i\mu_1P=i\mu(X, Y, \xi, \eta) =
                \biggl(\begin{pmatrix}-\eta & ix_3 &
                i\overline{x}_2 \cr
                i\overline{x}_3 & -\eta_3 & y_1 \cr
                ix_2 & \overline{y}_1 & -\eta_2
                \end{pmatrix},
                \begin{pmatrix}-\xi & iy_3 & i\overline{y}_2 \cr
                i\overline{y}_3 & -\xi_3 & x_1 \cr
                iy_2 & \overline{x}_1 & -\xi_2
                \end{pmatrix},
                -\eta_1, -\xi_1 \biggr),
                $$
                and the explicit form of $\mu P$ is as follows.
                $$
                \mu P=\mu(X, Y, \xi, \eta)=(\begin{pmatrix}
                \eta &    0   &   0 \\
                0  & \eta_3 & -y_1 \\
                0  & -\ov{y}_1 & \eta_2
                \end{pmatrix}, \,\,
                \begin{pmatrix}
                \xi & 0       & 0   \\
                0 & \xi_3 & -x_1 \\
                0 & -\ov{x}_1 & \xi_2
                \end{pmatrix}, \eta_1, \xi_1 )
                $$
                Hence we see $\tilde{\mu}_1 = -
                \mu\,\,\mbox{on}\,\, (\mathfrak{P}^C)_{\sigma}$,
                and $\tilde{\mu}_1 = i1, \, \mu = 0
                \,\,\mbox{on}\,\, (\mathfrak{P}^C)_{-\sigma}$.

            \end{itembox}
        \end{minipage}
    \end{center}
    Since from $\alpha \in (G_{12})^C \subset (G_{13})^C$, we see
    $\alpha(\varPhi(0,E_1, 0,0),0,0,0,1,0)=(\varPhi(0,E_1,
    0,0),0,$ $0,0,1,0)$, we have
    $$
    \alpha(\varPhi(0,0,E_1, 0),0,0,0,0,1)=(\varPhi(0,0,E_1,
    0),0,0,0,0,1).
    $$
    Indeed, since $(\varPhi(0,E_1, 0,0),0,0,0,1,0) \in
    (V^C)^{14}$and $\alpha \in (G_{12})^C \subset (G_{13})^C
    \subset (G_{14})^C$, using the relation $\tilde{\mu}_\delta
    \alpha R=\alpha \tilde{\mu}_\delta R, R \in (V^C)^{14}$, we
    have the relational formula above.

    \noindent Hence we have
    $$
    \alpha(\varPhi(0,E_1,E_1, 0),0,0,0,1,1)=(\varPhi(0,E_1,E_1,
    0),0,0,0,1,1),
    $$
    that is, $\alpha \mathcal{U}=\mathcal{U}$.

    \noindent Thus since we see
    $(\mbox{ad}\mathcal{U})\alpha=\alpha(\mbox{ad}\mathcal{U})$,
    we have $(\mbox{exp}\dfrac{\pi
    i}{2}(\mbox{ad}\mathcal{U}))\alpha=\alpha
    (\mbox{exp}\dfrac{\pi i}{2}(\mbox{ad}\mathcal{U}))$, that is,
    $\tilde{\mu}_1 \alpha=\alpha \tilde{\mu}_1$.

    Now from $\mu_1 \alpha=\alpha \mu_1$, for $P_1 \in
    \mathfrak{P}_\sigma, P_{-1} \in \mathfrak{P}_{-\sigma}$, we
    have
    $$
    \mu\alpha(P_1+P_{-1})=\mu(\alpha P_1+\alpha
    P_{-1})=\mu(\alpha P_1)+\mu(\alpha P_{-1})=-\mu_1(\alpha
    P_1)+0=-\mu_1(\alpha P_1),
    $$
    on the other hand, we have
    $$
    \alpha\mu(P_1+P_{-1})=\alpha(\mu P_1+\mu P_{-1})=\alpha(\mu
    P_1+0)=\alpha(\mu P_1)=\alpha(-\mu_1 P_1)=-\mu_1(\alpha P_1).
    $$
    Hence we see $\mu\alpha=\alpha\mu$. Thus we verify $\alpha
    \in Spin(12,C)$.
    \vspace{1mm}

    Conversely we shall show that $Spin(12, C) \subset
    (G_{12})^C$.
    For $\alpha \in Spin(12, C)$, from
    $\kappa\alpha=\alpha\kappa$, we have $(\mbox{ad}\mathcal{K})
    \alpha=\alpha (\mbox{ad}\mathcal{K})$. (Note that
    $(\mbox{ad}\mathcal{K})(\varPhi, P, Q, r, u, v)=([\kappa,
    \varPhi], \kappa P-P, \kappa Q +Q,0-2u,-2v)$).

    \noindent From $\alpha \in Spin(12,C) \subset {E_7}^C$ and
    $\mu\alpha=\alpha\mu$, we have
    $\alpha\mathcal{U}=\mathcal{U}$. (Note that
    $\mathcal{U}=(\varPhi(0,E_1,E_1,0),0,0,0,1,1),
    \mu=\varPhi(0,E_1,E_1,0)$).

    \noindent Here, since $\mathcal{U} \in
    ({\mathfrak{e}_8}^C)_{-2} \cup ({\mathfrak{e}_8}^C)_{2}$ and
    two spaces $({\mathfrak{e}_8}^C)_{-2},
    ({\mathfrak{e}_8}^C)_{2}$ are invariant under the action of
    $Spin(12, C)$, we see that
    $$
    \alpha(\varPhi(0,E_1,0,0),0,0,0,1,0) \in ({\mathfrak{e}_8}
    ^C)_{-2}, \,\, \alpha (\varPhi(0,0,E_1,0),0,0,0,0,1) \in
    ({\mathfrak{e}_8}^C)_{2}.
    $$
    Hence we have that
    \begin{align*}
    \alpha(\varPhi(0,E_1,0,0),0,0,0,1,0)&=(\varPhi(0,E_1,0,0),0,0,
    0,1,0)\\
    \alpha (\varPhi(0,0,E_1,0),0,0,0,0,1)&=(\varPhi(0,0,E_1,0),
    0,0,0,0,1).
    \end{align*}
    Moreover since $\alpha(0,0,0,0,-1,0)\!=\!(0,0,0,0,-1,0),
    \alpha(0,0,0,0,0,-1)\!=\!(0,0,0,0,0,-1)$, we have
    \begin{align*}
    \alpha(\varPhi(0,E_1,0,0),0,0,0,0,0)&=(\varPhi(0,E_1,0,0),
    0,0,0,0,0),\\
    \alpha(\varPhi(0,0,E_1,0),0,0,0,0,0)&=(\varPhi(0,0,E_1,0),
    0,0,0,0,0),
    \end{align*}
    and so we easily see $\alpha(\varPhi(0,E_1,0,0),
    0,0,0,-1,0)=(\varPhi(0,E_1,0,0),0,0,0,-1,0)$.

    \noindent Again, from $\mu\alpha=\alpha\mu$, we have $\mu_1
    \alpha=\alpha\mu_1$, and together with
    $\alpha(\varPhi(0,E_1,0,0),
    0,0,0,0,0)$ $=(\varPhi(0,E_1,0,0),0,0,0,0,0),
    \alpha(\varPhi(0,0,E_1,0),0,0,0,0,0)=(\varPhi(0,0,E_1,0),0,0,0,0,
    0)$, we have $\tilde{\mu}_\delta \alpha R=\alpha \tilde{\mu}_
    \delta R$.
    Indeed, for $R=(\varPhi(0,\upsilon_1 E_1,0,0),(\xi_1 E_1,
    \eta_2 E_2 + \eta_3 E_3 + F_1(y_1), \xi, 0),0,0,$ $u,0 )$, we
    do a
    simple computation as follows.
    \begin{align*}
    &\tilde{\mu}_\delta \alpha(\varPhi(0,\upsilon_1
    E_1,0,0),(\xi_1 E_1, \eta_2 E_2 + \eta_3 E_3 + F_1(y_1), \xi,
    0),0,0,u,0 )\\
    &=\delta\tilde{\mu}_1(\alpha\varPhi(0,\upsilon_1
    E_1,0,0)\alpha^{-1},\alpha(\xi_1 E_1, \eta_2 E_2 + \eta_3 E_3
    + F_1(y_1), \xi, 0),0,0,u,0 )\\
    &=\delta(\mu_1\alpha\varPhi(0,\upsilon_1
    E_1,0,0)\alpha^{-1}{\mu_1}^{-1},0,i\mu_1\alpha(\xi_1 E_1,
    \eta_2 E_2 + \eta_3 E_3 + F_1(y_1), \xi, 0),0,0,u )\\
    &=\delta(\mu_1\varPhi(0,\upsilon_1
    E_1,0,0){\mu_1}^{-1},0,\alpha(i\mu_1)(\xi_1 E_1, \eta_2 E_2 +
    \eta_3 E_3 + F_1(y_1), \xi, 0),0,0,u )\\
    &=\delta(\varPhi(0,0,\upsilon_1 E_1,0),0,\alpha(i\mu_1)(\xi_1
    E_1, \eta_2 E_2 + \eta_3 E_3 + F_1(y_1), \xi, 0),0,0,u )\\
    &=(\varPhi(0,0,-u E_1,0),0,\alpha(i\mu_1)(\xi_1 E_1, \eta_2
    E_2 + \eta_3 E_3 + F_1(y_1), \xi, 0),0,0,- \upsilon_1)\\
    &=(\alpha\varPhi(0,0,-u
    E_1,0)\alpha^{-1},0,\alpha(i\mu_1)(\xi_1 E_1, \eta_2 E_2 +
    \eta_3 E_3 + F_1(y_1), \xi, 0),0,0,-\upsilon_1 )\\
    &=\alpha(\varPhi(0,0,-u E_1,0),0,i\mu_1(\xi_1 E_1, \eta_2 E_2
    + \eta_3 E_3 + F_1(y_1), \xi, 0),0,0,-\upsilon_1 )\\
    &=\alpha\delta(\varPhi(0,0,\upsilon_1 E_1,0),0,i\mu_1(\xi_1
    E_1, \eta_2 E_2 + \eta_3 E_3 + F_1(y_1), \xi, 0),0,0,u )\\
    &=\alpha\delta(\mu_1\varPhi(0,\upsilon_1
    E_1,0,0){\mu_1}^{-1},0,i\mu_1(\xi_1 E_1, \eta_2 E_2 + \eta_3
    E_3 + F_1(y_1), \xi, 0),0,0,u )\\
    &=\alpha\delta\tilde{\mu}_1(\varPhi(0,\upsilon_1
    E_1,0,0),(\xi_1 E_1, \eta_2 E_2 + \eta_3 E_3 + F_1(y_1), \xi,
    0),0,0,u,0 )\\
    &=\alpha\tilde{\mu}_\delta(\varPhi(0,\upsilon_1
    E_1,0,0),(\xi_1 E_1, \eta_2 E_2 + \eta_3 E_3 + F_1(y_1), \xi,
    0),0,0,u,0 ).
    \end{align*}

    The proof of $Spin(12, C) \subset (G_{12})^C$ is completed.
    Therefore we have $Spin(12, C) = (G_{12})^C$.
\end{proof}
\begin{lem}\label{lem 2.4}
    The Lie algebras $(\mathfrak{g}_{14})^C$ and
    $(\mathfrak{g}_{13})^C$ of the groups $(G_{14})^C$ and
    $(G_{13})^C$ are given by
    \begin{align*}
    (\mathfrak{g}_{14})^C &=\{R_0 \in {\mathfrak{e}_8}^C \,|
    \,({\rm ad}\mathcal{K})({\rm ad}R_0)=({\rm ad}R_0)({\rm
    ad}\mathcal{K}),\, (\tilde{\mu}_{\delta}({\rm ad}R_0))R =
    (({\rm ad}R_0)\tilde{\mu}_{\delta})R, R \in (V^C)^{14} \} \\
    & = \{R_0 \in ({\mathfrak{e}_8}^C)^{\tilde{\kappa}_4} \, | \,
    (\tilde{\mu}_{\delta}({\rm ad}R_0))R = (({\rm
    ad}R_0)\tilde{\mu}_{\delta})R, R \in (V^C)^{14} \} \\
    &=\left\{(\varPhi, P, Q, r, 0, 0)\in
    {\mathfrak{e}_8}^C\left|\!\!
    \begin{array}{l}
    \varPhi=\varPhi(D+\tilde{A}(d_1)+(\tau_1 E_1+\tau_1
    E_2+\tau_1 E_3+F_1(t_1))^\sim, \\
    \qquad \alpha_2 E_2+\alpha_3 E_3+F_1(a_1),
    \beta_2 E_2+\beta_3 E_3+F_1(b_1)), \\
    \qquad \nu)\\
    \qquad\qquad D \in \mathfrak{so}(8,C), \tau_k \in C,
    \tau_1+\tau_2+\tau_3=0, \\
    \qquad\qquad d_1, t_1 \in \mathbf{O}^C,
    \alpha_k, \beta_k \in C, a_1, b_1 \in \mathbf{O}^C, \nu \in C,
    \vspace{1mm}\\
    P=(\rho_2 E_2+\rho_3 E_3 +F_1(p_1),\rho_1 E_1,0,\rho)\\
    \qquad\qquad \rho_k, \rho \in C, p_1 \in \mathbf{O}^C,
    \vspace{1mm}\\
    Q=(\zeta_1 E_1, \zeta_2 E_2+\zeta_3 E_3 +F_1(z_1),\zeta,0)\\
    \qquad\qquad \zeta_k, \zeta \in C, z_1 \in \mathbf{O}^C,
    \vspace{1mm}\\
    r \in C,
    \vspace{1mm}\\
    \tau_1+\dfrac{2}{3}\nu+2r=0
    \end{array}
    \right. \right \},
    \end{align*}

    \begin{align*}
    (\mathfrak{g}_{13})^C &=\{R_0 \in (\mathfrak{g}_{14})^C \,|\,
    {\rm ad}(R_0)(\varPhi(0, E_1,0,0), 0, 0, 0, 1, 0) = 0 \}\\
    &=\left\{
    (\varPhi, P, Q, 0, 0, 0) \in {\mathfrak{e}_8}^C
    \left|\!\!
    \begin{array}{l}
    \varPhi=\varPhi(D+\tilde{A}(d_1)+(\tau_1 E_1+\tau_1 E_2+\tau_1
    E_3+F_1(t_1))^\sim, \\
    \qquad \alpha_2 E_2+\alpha_3 E_3+F_1(a_1),
    \beta_2 E_2+\beta_3 E_3+F_1(b_1)), \\
    \qquad \nu)\\
    \qquad\qquad D \in \mathfrak{so}(8,C), \tau_k \in C, \tau_1+
    \tau_2+\tau_3=0, \\
    \qquad\qquad d_1, t_1 \in \mathbf{O}^C,
    \alpha_k, \beta_k \in C, a_1, b_1 \in \mathbf{O}^C, \nu \in C,
    \vspace{1mm}\\
    P=(\rho_2 E_2+\rho_3 E_3 +F_1(p_1),\rho_1 E_1,0,\rho)\\
    \qquad\qquad \rho_k, \rho \in C, p_1 \in \mathbf{O}^C,
    \vspace{1mm}\\
    Q=(-\rho E_1, -\rho_3 E_2+\rho_2 E_3 +F_1(p_1),\rho_1,0)\\
    \qquad\qquad \zeta_k, \zeta \in C, z_1 \in \mathbf{O}^C
    \vspace{1mm}\\
    \tau_1+\dfrac{2}{3}\nu=0
    \end{array}
    \right. \right \},
    \end{align*}
    respectively.
    \vspace{1mm}

    In particular, $\mbox{\rm dim}((\mathfrak{g}_{14})^C)=91,
    \mbox{\rm dim}((\mathfrak{g}_{13})^C)=78$.
\end{lem}
\begin{proof}
    We obtain the result of this lemma by doing a simple
    computation.
\end{proof}

\begin{lem}\label{lem 2.5}
    {\rm(1)}  For $a \in \mathbf{O}$, we define a $C$-linear
    transformation $\varepsilon_{13}(a)$ of ${\mathfrak{e}_8}^C$
    by
    $$
    \varepsilon_{13}(a) = \mbox{\rm exp}(\mbox{\rm ad}(0,
    (F_1(a), 0, 0, 0), (0, F_1(a), 0, 0), 0, 0, 0)). $$
    \noindent Then $\varepsilon_{13}(a) \in ((G_{13})^C)_0$ {\rm
    (Lemma \ref{lem 2.4})}. The action of $\varepsilon_{13}(a)$
    on $(V^C)^{13}$ is given by
    \begin{align*}
    &\varepsilon_{13}(a)(\varPhi(0,\upsilon_1 E_1,0,0),(\xi_1
    E_1, \eta_2 E_2 + \eta_3 E_3 + F_1(y_1), \xi,
    0),0,0,-\upsilon_1,0 )\\
    &=(\varPhi(0,{\upsilon_1}' E_1,0,0),({\xi_1}' E_1, {\eta_2}'
    E_2 + {\eta_3}' E_3 + F_1({y_1}'), \xi',
    0),0,0,-{\upsilon_1}',0 ),
    \end{align*}
    $$
    \left \{ \begin{array}{l}
    {\upsilon_1}' = \upsilon_1\cos|a|
    - \displaystyle{\frac{(a, y_1)}{2|a|}}\sin|a| \\
    {\xi_1}' = \xi_1 \\
    {\eta_2}' = \eta_2 \\
    {\eta_3}' = \eta_3 \\
    {y_1}' = y_1 + \displaystyle{\frac{2\upsilon_1
    a}{|a|}}\sin|a|               - \displaystyle{\frac{2(a,
    y_1)a}{|a|^2}}\sin^2\displaystyle{\frac{|a|}{2}} \\
    \xi' = \xi.
    \end{array} \right.
    $$

    {\rm (2)}  For $t \in \mathbf{R}$, we define a $C$-linear
    transformation $\theta_{13}(t)$ of ${\mathfrak{e}_8}^C$ by
    $$
    \theta_{13}(t) = \mbox{\rm exp}(\mbox{\rm ad}(0, (0, -tE_1,
    0, - t), (tE_1, 0, t, 0), 0, 0, 0)).
    $$
    \noindent Then $\theta_{13}(t) \in ((G_{13})^C)_0$ {\rm
    (Lemma \ref{lem 2.4})}. The action of $\theta_{13}(t)$ on
    $(V^C)^{13}$ is given by
    \begin{align*}
    &\theta_{13}(t)(\varPhi(0,\upsilon_1 E_1,0,0),(\xi_1 E_1,
    \eta_2 E_2 + \eta_3 E_3 + F_1(y_1), \xi, 0),0,0,-\upsilon_1,0
    )\\
    &=(\varPhi(0,{\upsilon_1}' E_1,0,0),({\xi_1}' E_1, {\eta_2}'
    E_2 + {\eta_3}' E_3 + F_1({y_1}'), \xi',
    0),0,0,-{\upsilon_1}',0 ),
    \end{align*}
    $$
    \left \{ \begin{array}{l}
    \vspace{1mm}
    {\upsilon_1}' = \upsilon_1\cos t
    - \displaystyle{\frac{1}{4}}(\xi_1 +\xi)\sin t \\
    {\xi_1}' = \displaystyle{\frac{1}{2}}(\xi_1 - \xi)
    + \displaystyle{\frac{1}{2}}(\xi_1 + \xi)\cos t
    + 2\upsilon_1\sin t \\
    {\eta_2}' = \eta_2 \\
    {\eta_3}' = \eta_3 \\
    {y_1}' = y_1  \\
    {\xi}' = - \displaystyle{\frac{1}{2}}(\xi_1 - \xi)
    + \displaystyle{\frac{1}{2}}(\xi_1 + \xi)\cos t
    + 2\upsilon_1\sin t.
    \end{array} \right.
    $$
\end{lem}
\begin{proof}
    We obtain the results of this lemma by doing a simple
    computation.
\end{proof}
\begin{prop}\label{prop 2.6}
    The homogeneous space $(G_{13})^C/(G_{12})^C$ is homeomorphic
    to the $12$ dimensional sphere $(S^C)^{12}${\rm :}$(G_{13})^C/
    (G_{12})^C \simeq (S^C)^{12}$.

    In particular, the group $(G_{13})^C$ is connected.
\end{prop}
\begin{proof}
    Let $(S^C)^{12} = \{ R \in (V^C)^{13} \, |\, (R, R)_\mu = 1 \}
    $. The group $(G_{13})^C$ acts on $(S^C)^{12}$. We shall show
    that
    this action is transitive. In order to prove this, it
    suffices to
    show that any $R \in (S^C)^{12}$ can be transformed to $(1/2)
    (\varPhi(0,E_1, 0,0), 0, 0, 0, - 1,0)$
    Now for a given
    $$
    R = (\varPhi(0,\upsilon_1 E_1,0,0),(\xi_1 E_1, \eta_2 E_2 +
    \eta_3 E_3 + F_1(y_1), \xi, 0),0,0,-\upsilon_1,0 ) \in
    (S^C)^{12},
    $$
    we choose $a \in \mathbf{O}$ such that $|a| = \pi/2, (a, y_1)
    = 0$ and operate $\varepsilon_{13}(a) \in ((G_{13})^C)_0$
    (Lemma \ref{lem 2.5}. (1)) on $R$. Then we have
    $$
    \varepsilon_{13}(a)R = (0,(\xi_1 E_1, \eta_2 E_2 + \eta_3 E_3
    + F_1(y_1), \xi, 0),0,0,0,0 ) =: R' \in (S^C)^{11},
    $$
    where $(S^C)^{11} = \{ R \in (V^C)^{12} \, | \, (R, R)_\mu =
    1 \}$. Since the group $(G_{12})^C$ acts transitively on
    $(S^C)^{11}$ (Proposition 1.3), from \cite[Lemma
    4.6.9]{iy2} there exists $\beta \in (G_{12})^C$
    such that
    $$
    \beta R'
    = (0, (E_1,0,1,0),0,0,0,0) = :R'' \in (S^C)^{11}.
    $$
    Indeed, we explain more detail. In \cite[Lemma
    4.6.9]{iy2}, as the $11$ dimensional complex sphere
    $(S^C)_{11}$, its sphere is given as follows:
    $$
    (S^C)_{11}=\{P=(\xi_2 E_2+\xi_3 E_3+F_1(x_1), \eta_1 E_1, 0,
    \eta)\,|\, (P, P)_\mu=1  \},
    $$
    and it has been explained that for $P \in (S^C)_{11}$, there
    exists $\alpha \in Spin(12,C)$ such that $\alpha P=(0, E_1,
    0,1)$. So, for $P_{-2}:=(\xi_1 E_1, \eta_2 E_2 + \eta_3 E_3 +
    F_1(y_1), \xi, 0) \in (S^C)^{11}$, let $\alpha \in Spin(12,
    C)$ such that $\alpha P=(0,E_1,0,1), \mu P= P_{-2}, P \in
    (S^C)_{11}$, we can confirm that $\alpha P_{-2}=(E_1,0,1,0)$
    as follows.
    $$
    \alpha P_{-2}=\alpha (\mu P)=\mu (\alpha P)=\mu (0, E_1, 0,
    1)=(E_1, 0, 1, 0).
    $$
    Moreover operate $\theta_{13}(- \pi/2)\in (G_{13})^C$ (Lemma
    \ref{lem 2.5}. (2)) on $R''$, then we have
    $$
    \theta_{13}(\displaystyle{- \frac{\pi}{2}})R''=
    \frac{1}{2}(\varPhi(0, E_1,0,0), 0, 0, 0, - 1, 0).
    $$
    Hence the transitivity is proved. The isotropy subgroup at
    $(1/2)(\varPhi(0,E_1,0,0), 0, 0, 0, - 1, 0)$ of $(G_{13})^C$
    is obviously the group $(G_{12})^C$.

    Thus we have the required homeomorphism
    $(G_{13})^C/(G_{12})^C \simeq (S^C)^{12}$.

    Finally since the group $(G_{12})^C$ is isomorphism
    $Spin(12, C)$ (Proposition \ref{prop 2.3}) and the sphere
    $(S^C)^{12}$ is connected, we have that the group
    $(G_{13})^C$ is connected.
\end{proof}

\begin{prop}\label{prop 2.7}
    The group $(G_{13})^C$ is isomorphic to $Spin(13,C)${\rm
    :}$(G_{13})^C \cong Spin(13,C)$.
\end{prop}
\begin{proof}
    Let $SO((V^C)^{13})=\{ \alpha \in \mbox{\rm
    Iso}_C((V^C)^{13})\,|\,(\alpha R, \alpha R)_\mu=(R,R)_\mu \}$
    as the complex special orthogonal group $SO(13,C)$.
    Then since the group $(G_{13})^C$ is connected (Proposition
    \ref{prop 2.6}), we can define a homomorphism $\pi :
    (G_{13})^C \to SO(13, C) = SO((V^C)^{13})$ by
    $\pi(\alpha) = \alpha|_{(V^C)^{13}}$. We shall show that the
    mapping $\pi$ is surjection. Firstly it is not difficult to
    obtain that $\mbox{Ker}\pi=\{ 1, \sigma \} \cong
    \mathbf{Z}_2$. Indeed, from the definition of kernel, we have
    that
    $\mbox{Ker}\pi=\{\alpha \in (G_{13})^C\,|\,\pi(\alpha)=1
    \}=\{ \alpha \in (G_{13})^C\,|\,  \alpha|_{(V^C)^{13}}=1\}$.
    Let $\alpha \in \mbox{Ker}\pi$. As in the proof of  Lemma
    \ref{lem 2.2}, we see $\alpha \in (G_{12})^C =Spin(12,C)$,
    moreover we see $\alpha (0,0,1,0)=(0,0,1,0)$ and $\alpha
    (E_1,0,0,0)=(E_1,0,0,0)$. From $\alpha
    (E_1,0,0,0)=(E_1,0,0,0)$ and $\mu\alpha=\alpha\mu$, we have
    $\alpha(0,0,0,1)=(0,0,0,1)$.
    \begin{center}
        \begin{minipage}[c]{120mm}
            \begin{itembox}[l]{\it Note 5}
                \vspace{-3mm}

                $$
                (0,0,0,1)=\mu(E_1,0,0,0)=\mu\alpha(E_1,0,0,0)=
                \alpha\mu(E_1,0,0,0)=\alpha(0,0,0,1)
                $$
            \end{itembox}
        \end{minipage}
    \end{center}
    Hence we have $\alpha \in Spin(10,C) \subset {E_6}^C$.
    Moreover since $\alpha E_2=E_2, \alpha E_3=E_3$, we see
    $\alpha \in Spin(8,C)$, and together with $\alpha
    F_1(y_1)=F_1(y_1), y_1 \in \mathbf{O}^C$, we have
    $$
    \alpha=(1, 1, 1)=1, \quad \mbox{or}\quad \alpha=(1,-1,
    -1)=\sigma,
    $$
    and vice versa. Since
    $\mbox{dim}_C((\mathfrak{g}_{13})^C)=78=\mbox{dim}
    _C(\mathfrak{so}(13,C))$,
     $SO(13,C)$ is connected and $\mbox{Ker}\pi$ is discrete,
    the mapping $\pi$ is surjection. Hence we have
    $(G_{13})^C/\mathbf{Z}_2 \cong SO(13,C)$. Therefore the
    group $(G_{13})^C$ is isomorphic to $Spin(13,C)$ as a
    universal covering group of $SO(13,C)$.
\end{proof}

\begin{lem}\label{lem 2.8}
    {\rm (1)} For $a \in \mathbf{O}$, we define a $C$-linear
    transformation $\varepsilon_{14}(a)$ of ${\mathfrak{e}_8}^C$
    by
    $$
    \varepsilon_{14}(a) = \mbox{\rm exp}(\mbox{\rm ad}(0, (-
    iF_1(a), 0, 0, 0), (0, iF_1(a), 0, 0), 0, 0, 0)).
    $$
    Then $\varepsilon_{14}(a) \in ((G_{14})^C)_0$ {\rm (Lemma
    \ref{lem 2.4})}. The action of $\varepsilon_{14}(a)$ on
    $(V^C)^{14}$ is given by
    \begin{align*}
    &\varepsilon_{14}(a)(\varPhi(0,\upsilon_1 E_1,0,0),(\xi_1
    E_1, \eta_2 E_2 + \eta_3 E_3 + F_1(y_1), \xi, 0),0,0,u,0 )\\
    &=(\varPhi(0,{\upsilon_1}' E_1,0,0),({\xi_1}' E_1, {\eta_2}'
    E_2 + {\eta_3}' E_3 + F_1({y_1}'), \xi', 0),0,0,u',0 ),
    \end{align*}
    $$
    \left \{ \begin{array}{l}
    {\upsilon_1}' = \displaystyle{\frac{1}{2}}(\upsilon_1 - u)
    + \displaystyle{\frac{1}{2}}(\upsilon_1 + u)\cos|a|
    - i\displaystyle{\frac{(a, y_1)}{2|a|}}\sin|a| \\
    {\xi_1}' = \xi_1 \\
    {\eta_2}' = \eta_2 \\
    {\eta_3}' = \eta_3 \\
    {y_1}' = y_1 - i \displaystyle{\frac{(\upsilon_1 +
    u)a}{|a|}}\sin|a|            - 2\displaystyle{\frac{(a,
    y_1)a}{|a|^2}}\sin^2\displaystyle{\frac{|a|}{2}} \\
    \xi' = \xi \\
    u' = - \displaystyle{\frac{1}{2}}(\upsilon_1 - u)
    + \displaystyle{\frac{1}{2}}(\upsilon_1 + u)\cos|a|
    - i\displaystyle{\frac{(a, y_1)}{2|a|}}\sin|a|.
    \end{array} \right.
    $$
    \vspace{1mm}

    {\rm (2)}  For $t \in \mathbf{R}$, we define a $C$-linear
    transformation $\theta_{14}(t)$ of ${\mathfrak{e}_8}^C$ by
    $$
    \theta_{14}(t) = \mbox{\rm exp}(\mbox{\rm ad}(0, (0, itE_1,
    0, it), (itE_1, 0, it, 0), 0, 0, 0)).
    $$
    Then $\theta_{14}(t) \in ((G_{14})^C)_0$ {\rm (Lemma \ref{lem
    2.4})}. The action of $\theta_{14}(t)$ on $(V^C)^{14}$ is
    given by
    \begin{align*}
    &\theta_{14}(t)(\varPhi(0,\upsilon_1 E_1,0,0),(\xi_1 E_1,
    \eta_2 E_2 + \eta_3 E_3 + F_1(y_1), \xi, 0),0,0,u,0 )\\
    &=(\varPhi(0,{\upsilon_1}' E_1,0,0),({\xi_1}' E_1, {\eta_2}'
    E_2 + {\eta_3}' E_3 + F_1({y_1}'), \xi', 0),0,0,u',0 ),
    \end{align*}
    $$
    \left \{ \begin{array}{l}
    \vspace{1mm}
    {\upsilon_1}' = \displaystyle{\frac{1}{2}}(\upsilon_1 - u)
    + \displaystyle{\frac{1}{2}}(\upsilon_1 + u)\cos t
    - \displaystyle{\frac{i}{4}}(\xi_1 +\xi)\sin t \\
    {\xi_1}' = \displaystyle{\frac{1}{2}}(\xi_1 - \xi)
    + \displaystyle{\frac{1}{2}}(\xi_1 + \xi)\cos t
    - i(\upsilon_1 + u)\sin t \\
    {\eta_2}' = \eta_2 \\
    {\eta_3}' = \eta_3 \\
    {y_1}' = y_1  \\
    \vspace{0.5mm}
    {\xi}' = - \displaystyle{\frac{1}{2}}(\xi_1 - \xi)
    + \displaystyle{\frac{1}{2}}(\xi_1 + \xi)\cos t
    - i(\upsilon_1 + u)\sin t \\
    u' = - \displaystyle{\frac{1}{2}}(\upsilon_1 - u)
    + \displaystyle{\frac{1}{2}}(\upsilon_1 + u)\cos t
    - \displaystyle{\frac{i}{4}}(\xi_1 + \xi)\sin t.
    \end{array} \right.
    $$
\end{lem}

\begin{prop}\label{prop 2.9}
    The homogeneous space $(G_{14})^C/(G_{13})^C$ is homeomorphic
    to the $12$ dimensional sphere $(S^C)^{13}${\rm
    :}$(G_{14})^C/(G_{13})^C \simeq (S^C)^{13}$.

    In particular, the group $(G_{14})^C$ is connected.
\end{prop}
\begin{proof}
    Let $(S^C)^{13} = \{ R \in (V^C)^{14} \, |\, (R, R)_\mu = 1 \}
    $. The group $(G_{14})^C$ acts on $(S^C)^{13}$. We shall show
    that
    this action is transitive. In order to prove this, it
    suffices to
    show that any $R \in (S^C)^{13}$ can be transformed to $(i/2)
    (\varPhi(0,E_1, 0,0), 0, 0, 0, - 1, 0) \in (S^C)^{13}$.

    Now for a given
    $$
    R = (\varPhi(0,\upsilon_1 E_1,0,0),(\xi_1 E_1, \eta_2 E_2 +
    \eta_3 E_3 + F_1(y_1), \xi, 0),0,0,u,0 ) \in (S^C)^{13},
    $$
    we choose $a \in \mathbf{O}$ such that $|a| = \pi/2, (a, y_1)
    = 0$ and operate $\varepsilon_{14}(a) \in ((G_{14})^C)_0$
    (Lemma
    \ref{lem 2.8}. (1)) on $R$. Then we have
    $$
    \varepsilon_{14}(a)R = (\varPhi(0,\upsilon'_1 E_1,0,0),(\xi_1
    E_1, \eta_2 E_2 + \eta_3 E_3 + F_1(y'_1), \xi, 0),0,0,-
    \upsilon'_1,0 ) =: R' \in (S^C)^{12},
    $$
    where $(S^C)^{12} = \{ R \in (V^C)^{13} \, | \, (R, R)_\mu = 1
    \}$ . Since the group $(G_{13})^C$ acts transitively on $
    (S^C)^{12}$ (Proposition \ref{prop 2.6}), there exists $\beta
    \in
    (G_{13})^C$ such that
    $$
    \beta R'
    = (0, (E_1,0,1,0),0,0,0,0) = :R'' \in (S^C)^{12}.
    $$
    Moreover operate $\theta_{14}(- \pi/2)\in ((G_{14})^C)_0$
    (Lemma \ref{lem 2.8}. (2)) on $R''$, then we have
    $$
    \theta_{14}(\displaystyle{-\frac{\pi}{2}})R''=
    \frac{i}{2}(\varPhi(0, E_1,0,0), 0, 0, 0, 1, 0).
    $$
    Hence the transitivity is proved. The isotropy subgroup at
    $(i/2)(\varPhi(0,E_1,0,0), 0, 0, 0, - 1, 0)$ of $(G_{14})^C$
    is obviously the group $(G_{13})^C$.

    Thus we have the required homeomorphism
    $(G_{14})^C/(G_{13})^C \simeq (S^C)^{13}$.

    Finally since the group $(G_{13})^C$ is isomorphism
    $Spin(13, C)$ (Proposition \ref{prop 2.7}) and the sphere
    $(S^C)^{13}$ is connected, we have that the group
    $(G_{14})^C$ is connected.
\end{proof}
\begin{prop}\label{prop 2.10}
    The group $(G_{14})^C$ is isomorphic to $Spin(14,C)${\rm
    :}$(G_{14})^C \cong Spin(14,C)$.
\end{prop}
\begin{proof}
    Let $SO((V^C)^{14})=\{ \alpha \in \mbox{\rm
    Iso}_C((V^C)^{14})\,|\,(\alpha R, \alpha R)_\mu=(R,R)_\mu \}$
    as the complex special orthogonal group $SO(14,C)$.
    Then since the group $(G_{14})^C$ is connected (Proposition
    \ref{prop 2.9}), we can define a homomorphism $\pi :
    (G_{14})^C \to SO(14, C) = SO((V^C)^{14})$ by
    $\pi(\alpha) = \alpha|_{(V^C)^{14}}$. We shall show that the
    mapping $\pi$ is surjection. Firstly it is not difficult to
    obtain that $\mbox{Ker}\pi=\{ 1, \sigma \} \cong
    \mathbf{Z}_2$. Indeed, from the definition of kernel, we have
    that
    $\mbox{Ker}\pi=\{\alpha \in (G_{14})^C\,|\,\pi(\alpha)=1
    \}=\{ \alpha \in (G_{14})^C\,|\,  \alpha|_{(V^C)^{14}}=1\}$.
    Let $\alpha \in \mbox{Ker}\pi$. Then since we easily see
    $\alpha \in (G_{13})^C$, as in the proof of Proposition
    \ref{prop 2.7}, we have
    $$
    \alpha=(1, 1, 1)=1, \quad \mbox{or}\quad \alpha=(1,-1,
    -1)=\sigma,
    $$
    and vice versa.
    \if0
    As in the proof of  Lemma 1.2, we see $\alpha \in (G_{12})^C
    =Spin(12,C)$, moreover we see $\alpha (0,0,1,0)=(0,0,1,0)$
    and $\alpha (E_1,0,0,0)=(E_1,0,0,0)$. From $\alpha
    (E_1,0,0,0)=(E_1,0,0,0)$ and $\mu\alpha=\alpha\mu$, we have
    $\alpha(0,0,0,1)=(0,0,0,1)$.
    \begin{center}
        \begin{minipage}[c]{120mm}
            \begin{itembox}[l]{\it Note 5}
                \vspace{-3mm}

                $$
                (0,0,0,1)=\mu(E_1,0,0,0)=\mu\alpha(E_1,0,0,0)=\alpha\mu(E_1,0,0,0)=\alpha(0,0,0,1)
                $$
            \end{itembox}
        \end{minipage}
    \end{center}
    Hence we have $\alpha \in Spin(10,C) \subset {E_6}^C$.
    Moreover since $\alpha E_2=E_2, \alpha E_3=E_3$, we see
    $\alpha \in Spin(8,C)$, and together with $\alpha
    F_1(y_1)=F_1(y_1), y_1 \in \mathbf{O}^C$, we have
    $$
    \alpha=(1, 1, 1)=1, \quad \mbox{or}\quad \alpha=(1,-1,
    -1)=\sigma,
    $$
    and vice versa.
    \fi
    Since
    $\mbox{dim}_C((\mathfrak{g}_{14})^C)=91=\mbox{dim}_C(\mathfrak{so}(14,C))$,
     $SO(14,C)$ is connected and $\mbox{Ker}\pi$ is discrete,
    the mapping $\pi$ is surjection. Hence we have
    $(G_{14})^C/\mathbf{Z}_2 \cong SO(14,C)$. Therefore the
    group $(G_{14})^C$ is isomorphic to $Spin(14,C)$ as a
    universal covering group of $SO(14,C)$.
\end{proof}
\vspace{1mm}
\fi

\if0
    As for $ \phi(a)R =
    a^2R, R \in (V^C)^{14}$, for $R=(\varPhi(0,\upsilon_1
    E_1,0,0),(\xi_1 E_1, \eta_2 E_2 + \eta_3 E_3 + F_1(y_1), \xi,
    0),0,0,u,0 )$, we should do a simple computation as follows.
    \begin{align*}
    &\phi(a)(\varPhi(0,\upsilon_1 E_1,0,0),(\xi_1 E_1, \eta_2 E_2
    + \eta_3 E_3 + F_1(y_1), \xi, 0),0,0,u,0 ) \\
    &=(\psi(a)\varPhi(0,\upsilon_1
    E_1,0,0)\psi(a)^{-1},a\psi(a)(\xi_1 E_1, \eta_2 E_2 + \eta_3
    E_3 + F_1(y_1), \xi, 0),0,0,a^2u,0 ) \\
    &=(\psi(a)(2\upsilon_1(E_1, 0,1,0) \times (E_1,
    0,1,0))\psi(a)^{-1}, a(a\xi_1 E_1, a\eta_2 E_2 + a\eta_3 E_3
    + F_1(ay_1), a\xi, 0),\\
    &{\hspace*{105mm}0,0,a^2u,0 )}\\
    &=(2\upsilon_1\psi(a)(E_1, 0,1,0) \times \psi(a)(E_1,
    0,1,0)), a^2(\xi_1 E_1, \eta_2 E_2 + \eta_3 E_3 + F_1(y_1),
    \xi, 0),\\
    &{\hspace*{105mm}0,0,a^2u,0 )}\\
    &=(2\upsilon_1(aE_1, 0,a,0) \times (aE_1, 0,a,0),a^2(\xi_1
    E_1, \eta_2 E_2 + \eta_3 E_3 + F_1(y_1), \xi, 0),0,0,a^2u,0
    )\\
    &=(a^2\varPhi(0, \upsilon_1 E_1,a,0)
    ,a^2(\xi_1 E_1, \eta_2 E_2 + \eta_3 E_3 + F_1(y_1), \xi,
    0),0,0,a^2u,0 )\\
    &=a^2(\varPhi(0,\upsilon_1 E_1,0,0),(\xi_1 E_1, \eta_2 E_2 +
    \eta_3 E_3 + F_1(y_1), \xi, 0),0,0,u,0 ).
    \end{align*}
    \fi

\if0
Then we have the following theorem.

\begin{thm}\label{thm }
    The group $({E_8}^C)^{\kappa^{}_4}$ coincides with the group $ ({E_8}^C)^{\kappa^{}_3} ${\rm :}$ ({E_8}^C)^{\kappa^{}_4}\!=({E_8}^C)^{\kappa^{}_3} $.
\end{thm}
\begin{proof}
   Since $ {E_8}^C $ is simply connected complex Lie group type $ E_8 $, both of the groups  $ ({E_8}^C)^{\kappa^{}_4} $ and $ ({E_8}^C)^{\kappa^{}_3} $ are connected. Hence it follows from $ ({\mathfrak{e}_8}^C)^{\kappa^{}_4}=({\mathfrak{e}_8}^C)^{\kappa^{}_3} $ that
   the group $ ({E_8}^C)^{\kappa^{}_4} $ coincides with the group $ ({E_8}^C)^{\kappa^{}_3} $: $ ({E_8}^C)^{\kappa^{}_4}=({E_8}^C)^{\kappa^{}_3} $ (Lemma \ref{lem 8.1}).
\end{proof}

Now, we determine the structure of the group $({E_8}^C)^{\kappa^{}_4}$.

\begin{corollary}\label{corollary 8.3}
     The group $({E_8}^C)^{\kappa^{}_4}$ is isomorphic to the
    group $(C^* \times Spin(14,C))/\Z_4,\allowbreak \Z_4 =\{(1,1), (-1, \phi(-1)), (i, \phi(-i)), (-i, \phi(i))\}${\rm :}$({E_8}^C)^{\kappa^{}_4} \cong (C^* \times Spin(14,C))/\Z_4$.
\end{corollary}
\begin{proof}
  We define a mapping  $ \varphi:C^* \times Spin(14,C) \to ({E_8}^C)^{\kappa^{}_4} $ by
  \begin{align*}
   \varphi(a,\beta)=\phi(a)\beta,
  \end{align*}
where $ \phi $ is defined in \cite[Subsection 5.3 (p.45)]{miya0} and $ Spin(14,C) $ is constructed in \cite[Proposition 5.8.7]{miya0} (A brief explanation as for $ Spin(14,C) $ will be added in the next subsection). Then, from Theorem \ref{thm 8.2} and $ ({E_8}^C)^{\kappa^{}_3} \cong (C^* \times Spin(14,C))/\Z_4 $, the mapping $ \varphi $ induces the required isomorphism.
\end{proof}
\fi
\vspace{1mm}

Now, we determine the structure of the group $({E_8}^C)^{\kappa^{}_4}$.

\begin{thm}\label{thm 8.2}
    The group $({E_8}^C)^{\kappa^{}_4}$ is isomorphic to the
    group $(C^* \times Spin(14,C))/\Z_4,\allowbreak \Z_4 =\{(1,1), (-1, \phi(-1)), (i, \phi(-i)), (-i, \phi(i))\}${\rm :} $({E_8}^C)^{\kappa^{}_4} \cong (C^* \times Spin(14,C))/\Z_4$.
\end{thm}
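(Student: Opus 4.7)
The plan is to exploit Lemma \ref{lem 8.1} to reduce Theorem \ref{thm 8.2} to the author's earlier result $({E_8}^C)^{\kappa^{}_3} \cong (C^* \times Spin(14,C))/\Z_4$ established as \cite[Theorem 5.10]{miya0}. The key observation is that although $\kappa^{}_3$ and $\kappa^{}_4$ are inner automorphisms of different orders, they arise as exponentials of the same element $\ad\,\kappa$ with coefficients $2\pi i/3$ and $2\pi i/4$ respectively, and Lemma \ref{lem 8.1} shows that the two fixed-point subalgebras agree. If we can also verify that the corresponding fixed-point subgroups are connected, then the groups themselves must coincide inside ${E_8}^C$, and the theorem follows by transporting the known isomorphism.

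First, I would argue that both $({E_8}^C)^{\kappa^{}_3}$ and $({E_8}^C)^{\kappa^{}_4}$ are connected closed subgroups of the simply connected complex Lie group ${E_8}^C$. This is Steinberg's connectedness theorem: the centralizer of a semisimple element in a simply connected complex semisimple Lie group is connected, and $\kappa^{}_n=\exp((2\pi i/n)\ad\,\kappa)$ lies in the image of the one-parameter subgroup $t\mapsto\exp(t\,\ad\,\kappa)$ so its centralizer in ${E_8}^C$ is connected. Combining connectedness with the equality of Lie algebras from Lemma \ref{lem 8.1} immediately gives $({E_8}^C)^{\kappa^{}_4}=({E_8}^C)^{\kappa^{}_3}$ as subgroups of ${E_8}^C$.

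With this identification, I would define the homomorphism $\varphi : C^*\times Spin(14,C)\to ({E_8}^C)^{\kappa^{}_4}$ by $\varphi(a,\beta)=\phi(a)\beta$, where $\phi$ is the embedding of $C^*$ introduced in \cite[Subsection 5.3]{miya0}. This is exactly the map used in the proof of \cite[Theorem 5.10]{miya0}, and since the target group is literally the same as $({E_8}^C)^{\kappa^{}_3}$ by the previous step, its well-definedness, surjectivity, and the kernel computation transfer verbatim, yielding $\Ker\varphi=\Z_4=\{(1,1),(-1,\phi(-1)),(i,\phi(-i)),(-i,\phi(i))\}$ and hence the desired isomorphism by the homomorphism theorem.

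The main obstacle is to justify the connectedness of $({E_8}^C)^{\kappa^{}_4}$ in a manner consistent with the elementary style of the rest of the paper. If invoking Steinberg's theorem is judged too heavy, an alternative is to argue directly: take the generators of $({E_8}^C)^{\kappa^{}_3}$ arising through $\varphi$, namely $\phi(a)$ for $a\in C^*$ and elements of the concrete $Spin(14,C)\subset {E_8}^C$ constructed in \cite{miya0}, and verify by inspection that each commutes with $\kappa^{}_4$; this gives $({E_8}^C)^{\kappa^{}_3}\subset ({E_8}^C)^{\kappa^{}_4}$, after which Lemma \ref{lem 8.1} together with the known connectedness of $({E_8}^C)^{\kappa^{}_3}$ and a dimension count forces the reverse inclusion.
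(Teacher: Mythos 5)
Your proposal is correct and follows essentially the same route as the paper: establish $({E_8}^C)^{\kappa^{}_4}=({E_8}^C)^{\kappa^{}_3}$ by combining the connectedness of both fixed-point subgroups of the simply connected group ${E_8}^C$ with the equality of Lie algebras from Lemma \ref{lem 8.1}, and then transfer the isomorphism of \cite[Theorem 5.10]{miya0} via the same map $\varphi(a,\beta)=\phi(a)\beta$. The only cosmetic difference is that you invoke Steinberg's centralizer theorem for connectedness, whereas the paper cites the Ra{\v s}evskii-type result used in its preliminaries; both serve the same purpose.
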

\begin{proof}
    First, we will prove that the group $({E_8}^C)^{\kappa^{}_4}$ coincides with the group $ ({E_8}^C)^{\kappa^{}_3} $. Since $ {E_8}^C $ is the simply connected complex Lie group type $ E_8 $, both of the groups  $ ({E_8}^C)^{\kappa^{}_4} $ and $ ({E_8}^C)^{\kappa^{}_3} $ are connected ((\cite[Preliminaries Lemma 2.2]{miya2}) in \cite{ra}),
    moreover together with $ ({\mathfrak{e}_8}^C)^{\kappa^{}_4}=({\mathfrak{e}_8}^C)^{\kappa^{}_3} $ (Lemma \ref{lem 8.1}), we have
    $ ({E_8}^C)^{\kappa^{}_4}=({E_8}^C)^{\kappa^{}_3} $.

    Here, we define a mapping  $ \varphi:C^* \times Spin(14,C) \to ({E_8}^C)^{\kappa^{}_4} $ by
    the same mapping defined in the proof of \cite[Theorem 5.10]{miya0} as follows:
    \begin{align*}
        \varphi(a,\beta)=\phi(a)\beta,
    \end{align*}
    where $ \phi $ is defined in \cite[Subsection 5.3 (p.45)]{miya0} and $ Spin(14,C) $ is constructed in \cite[Proposition 5.8.7]{miya0}.

     Therefore, from $ ({E_8}^C)^{\kappa^{}_4}=({E_8}^C)^{\kappa^{}_3} $, we have the required isomorphism
    \begin{align*}
        ({E_8}^C)^{\kappa^{}_4} \cong (C^* \times Spin(14,C))/\Z_4.
    \end{align*}
\end{proof}

\subsection{The group $(E_8)^{\kappa_4}$ }

In this subsection, as for the construction of $ Spin(14,C) $, we will give a briefly explain based on \cite[Section 5.3]{miya0}, and note that several signs in \cite[Section 5.3]{miya0} are changed as mentioned in the beginning of this section. After that, we will construct the group $ Spin(14) $ in $ E_8 $ and determine the structure of the group $(E_8)^{\kappa_4}$.

Besides, as for the embedding sequence of the exceptional compact Lie groups: $ Spin(8) \subset F_4 \subset E_6 \subset E_7 \subset E_8  $, see \cite[Theorems 2.7.1, 3.7.1, 4.7.2, 5.7.3 ]{iy0} in detail.



\vspace{1mm}

We define $14$-dimensional $C$-vector subspaces
$({\mathfrak{e}_8}^C)_2, ({\mathfrak{e}_8}^C)_{-2}$ of
${\mathfrak{e}_8}^C$ by
\begin{align*}
({\mathfrak{e}_8}^C)_2
&=\left\lbrace R \in
{\mathfrak{e}_8}^C \,|\,(\ad\kappa) R=2R \right\rbrace
\\
&=\left\lbrace  R=(\varPhi, 0, Q, 0,0, t) \relmiddle{|}
\begin{array}{l}\varPhi=\varPhi(0,0,\varrho_1 E_1,0), \varrho_1 \in C,\\
Q=(\xi_2 E_2 + \xi_3 E_3 + F_1(x_1), \eta_1 E_1, 0,
\eta),\\
\qquad\; \xi_k, \eta_1, \eta \in C, x_1 \in
\mathfrak{C}^C,\\
t \in C
\end{array}
\right\rbrace ,
\\[2mm]
({\mathfrak{e}_8}^C)_{-2}
&=\left\lbrace R \in {\mathfrak{e}
    _8}^C \relmiddle{|}(\ad\kappa) R=-2R \right\rbrace
\\
&=\left\lbrace  R=(\varPhi, P, 0, 0,s, 0) \relmiddle{|}
\begin{array}{l}\varPhi=\varPhi(0,\upsilon_1 E_1,0,0),\upsilon_1 \in C,\\
P=(\xi_1 E_1, \eta_2 E_2 + \eta_3 E_3 + F_1(y_1), \xi, 0),
\\
\qquad\; \xi_1, \eta_k, \xi \in C, y_1 \in
\mathfrak{C}
^C,\\
s \in C
\end{array}
\right\rbrace,
\end{align*}
respectively, where $\kappa=(\varPhi(-2E_1 \vee E_1, 0,0, -1), 0,0,-1,0,0)
\in {\mathfrak{e}_8}^C$ is used in previous subsection .

We define a $C$-linear mapping
$\zeta : {\mathfrak{e}_8}^C \to {\mathfrak{e}_8}^C$
by
\begin{align*}
\zeta({\varPhi}, P, Q, r, s, t) =
(\zeta_1{\varPhi}{\zeta_1}^{-1}, i\zeta_1Q, i\zeta_1P, -r, t,
s),
\end{align*}
where the $C$-linear transformation $\zeta_1 $ of $ \mathfrak{P}^C $ on the right hand side is defined by
\begin{align*}
\zeta_1(X, Y, \xi, \eta) =
(\begin{pmatrix}i\eta & x_3 & \overline{x}_2 \\
\overline{x}_3 & i\eta_3 & - iy_1 \\
x_2 & - i\,\overline{y}_1 & i\eta_2
\end{pmatrix},
\begin{pmatrix}i\xi & y_3 & \overline{y}_2 \\
\overline{y}_3 & i\xi_3 & - ix_1 \\
y_2 & - i\,\overline{x}_1 & i\xi_2
\end{pmatrix},
i\eta_1, i\xi_1 ),
\end{align*}
\noindent In particular, the restriction of the mapping $ \zeta $ to $ ({\mathfrak{e}_8}^C)_{-2} $ induces a mapping $ ({\mathfrak{e}_8}^C)_{-2} \to ({\mathfrak{e}_8}^C)_2 $, so the explicit form of its restriction mapping
is given by
\begin{align*}
&\quad \zeta(\varPhi(0,\upsilon_1 E_1,0,0),(\xi_1 E_1,
\eta_2
E_2 + \eta_3 E_3 + F_1(y_1), \xi, 0),0,0,s,0 )
\\
&=(\varPhi(0,0,\upsilon_1 E_1,0),0,(-\eta_3 E_2 -\eta_2 E_3 +
F_1(y_1), -\xi E_1, 0, -\xi_1),0,0,s).
\end{align*}
Note that the restriction of the mapping $ \zeta $ to $ ({\mathfrak{e}_8}^C)_2 $ is also denoted by the same sign.

Moreover, we define a $C$-linear mapping $\delta :
({\mathfrak{e}_8}^C)_2 \to ({\mathfrak{e}_8}^C)_2$
\begin{align*}
\delta(\varPhi(0,0,\varrho_1 E_1,0),0,Q, 0,0,t)
=(\varPhi(0,0,-t E_1,0),0,Q, 0,0,-\varrho_1).
\end{align*}
We denote the composition mapping $\delta\zeta :
({\mathfrak{e}_8}^C)_{-2} \to ({\mathfrak{e}_8}^C)_2$ of
$\zeta$ and $\delta$ by $\zeta_{\delta}$, then the explicit form of the mapping $ \zeta_{\delta} $ is given by
\begin{align*}
&\quad \zeta_{\delta}(\varPhi(0,\upsilon_1 E_1,0,0),(\xi_1 E_1,
\eta_2 E_2 + \eta_3 E_3 + F_1(y_1), \xi, 0),0,0,s,0 )
\\
&=(\varPhi(0,0,-s E_1,0),0,(-\eta_3 E_2 -\eta_2 E_3 + F_1(y_1),
-\xi E_1, 0, -\xi_1),0,0,-\upsilon_1),
\end{align*}
in addition, the explicit form of the inverse mapping ${\zeta_\delta}^{-1}:({\mathfrak{e}_8}^C)_2 \to ({\mathfrak{e}_8}^C)_{-2}$ is given by
\begin{align*}
&\quad {\zeta_\delta}^{-1}(\varPhi(0,0,\varrho_1 E_1, 0),0,
(\xi_2
E_2+\xi_3 E_3+F_1(x_1), \eta_1 E_1, 0,\eta), 0,0,t)
\\
&=(\varPhi(0,-t E_1,0,0),(-\eta E_1, -\xi_3 E_2-\xi_2
E_3+F_1(x_1),-\eta_1,0),0,0,-\varrho_1,0).
\end{align*}

Now, we define a subgroup $ (G_{14})^C $ of $ {E_8}^C $ by
\begin{align*}
(G_{14})^C:=\left\lbrace \beta \in {E_8}^C \relmiddle{|} (\ad
\kappa)\beta=\beta(\ad \kappa), \zeta_\delta \beta R=\beta
\zeta_\delta R, R \in ({\mathfrak{e}_8}^C)_{-2} \right\rbrace.
\end{align*}
Then, from \cite[Proposition 5.8.7]{miya0}, the group $ (G_{14})^C $ is isomorphic to the group $ Spin(14,C) $ as the universal covering group of $ SO(14,C) $:
\begin{align*}
   (G_{14})^C \cong Spin(14,C).
\end{align*}
In particular, note that $ (G_{14})^C, ({\mathfrak{e}_8}^C)_{-2} $ above are denoted by $ G_{14}, (V^C)^{14} $ in \cite[Proposition 5.8.7]{miya0}, respectively.

We prove the following lemma needed below and later.
\begin{lem}\label{lem 8.3}
    The $C$-linear transformation $\tau\lambda_\omega$ satisfies the formula $ (\ad\kappa)\tau\lambda_\omega=-\tau\lambda_\omega\allowbreak (\ad\kappa) $ and
    commutes with the $C$-linear transformation $ \kappa^{}_4${\rm :}$(\tau\lambda_\omega)\kappa^{}_4=\kappa^{}_4(\tau\lambda_\omega)
    $, where $\tau\lambda_\omega$ is a composition transformation
    of $\tau$ and $\lambda_\omega$ defined in
    \cite[Preliminaries (p.96)] {miya2}.
\end{lem}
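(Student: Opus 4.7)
The plan is to reduce both assertions to the single identity $\tau\lambda_\omega(\kappa)=-\kappa$ in $\mathfrak{e}_8^C$, and then deduce the two commutation properties from it by purely formal arguments.

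\emph{Step 1 (key identity).} First, I would verify $\tau\lambda_\omega(\kappa)=-\kappa$ by direct computation from the definition $\kappa=(\varPhi(-2E_1\vee E_1,0,0,-1),0,0,-1,0,0)$ using the explicit description of $\tau$ (complex conjugation on $\mathfrak{e}_8^C$) and of $\lambda_\omega$ recorded in \cite[Preliminaries (p.96)]{miya2}. Since $\tau$ fixes the real coefficients appearing in $\kappa$ (the entries of $E_1\vee E_1$ and the real scalar $-1$), the identity reduces to a check of how $\lambda_\omega$ rescales each component of $\kappa$; one verifies that the net effect is multiplication by $-1$.

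\emph{Step 2 (first commutation).} Both $\tau$ and $\lambda_\omega$ are Lie algebra automorphisms of $\mathfrak{e}_8^C$ (with $\tau$ being $C$-antilinear), so their composition preserves brackets: $\tau\lambda_\omega[X,Y]=[\tau\lambda_\omega X,\tau\lambda_\omega Y]$. Combining this with Step~1 (which implies $\tau\lambda_\omega(-\kappa)=\kappa$, using antilinearity on the real scalar $-1$), we have for every $Y\in\mathfrak{e}_8^C$:
\begin{align*}
(\ad\kappa)(\tau\lambda_\omega Y)
&=[\kappa,\tau\lambda_\omega Y]=[\tau\lambda_\omega(-\kappa),\tau\lambda_\omega Y]\\
&=\tau\lambda_\omega[-\kappa,Y]=-\tau\lambda_\omega(\ad\kappa)(Y),
\end{align*}
which is the first claim.

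\emph{Step 3 (second commutation).} For $\kappa^{}_4=\exp\!\bigl(\tfrac{2\pi i}{4}\ad\kappa\bigr)$, I would expand the exponential as a power series. Iterating Step~2 gives $\tau\lambda_\omega(\ad\kappa)^n=(-1)^n(\ad\kappa)^n\tau\lambda_\omega$. Because $\tau\lambda_\omega$ is $C$-antilinear, pulling the scalar $\bigl(\tfrac{2\pi i}{4}\bigr)^n$ through it conjugates it to $\bigl(-\tfrac{2\pi i}{4}\bigr)^n$, contributing an additional factor $(-1)^n$. The two sign factors cancel:
\begin{align*}
\tau\lambda_\omega\,\kappa^{}_4
&=\sum_{n=0}^{\infty}\frac{(-2\pi i/4)^n}{n!}\,(-1)^n(\ad\kappa)^n\,\tau\lambda_\omega\\
&=\sum_{n=0}^{\infty}\frac{(2\pi i/4)^n}{n!}(\ad\kappa)^n\,\tau\lambda_\omega=\kappa^{}_4\,\tau\lambda_\omega,
\end{align*}
which is the second claim.

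The only genuine work is the bookkeeping in Step~1; Steps~2 and~3 are formal consequences once the key identity $\tau\lambda_\omega(\kappa)=-\kappa$ has been established. The main subtlety to be careful with is the interplay between the $C$-antilinearity of $\tau$ and the factor $i$ inside the exponential in Step~3 — it is precisely this antilinearity that supplies the second sign needed to make the two commutations compatible.
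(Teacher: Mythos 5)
Your proof is correct, but it follows a different route from the paper's. The paper proves the first identity by a direct component computation: it writes out $(\ad\kappa)\tau\lambda_\omega(\varPhi,P,Q,r,s,t)$ using the explicit six-component formulas for $\tau\lambda_\omega$ and $\ad\kappa$, invoking only the anticommutation $\tau\lambda\,\varPhi_\kappa=-\varPhi_\kappa\,\tau\lambda$ for $\varPhi_\kappa=\varPhi(-2E_1\vee E_1,0,0,-1)$, and then matches the result with $-\tau\lambda_\omega(\ad\kappa)(\varPhi,P,Q,r,s,t)$; for the second identity it simply appeals to a straightforward check against the closed-form definition $\kappa^{}_4(\varPhi,P,Q,r,s,t)=(\nu^{}_4\varPhi{\nu^{}_4}^{-1},-i\nu^{}_4P,i\nu^{}_4Q,r,-s,-t)$, with no use of the exponential expression. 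You instead reduce everything to the single identity $\tau\lambda_\omega\kappa=-\kappa$ (which is equivalent to the paper's key fact together with the sign flip on the $r$-component, and does check out via $\tau\lambda\varPhi(\phi,A,B,\nu)\lambda^{-1}\tau=\varPhi(-\tau\,{}^t\phi\tau,-\tau B,-\tau A,-\tau\nu)$), then exploit that $\tau\lambda_\omega$ is the antilinear bracket-preserving conjugation defining the compact form to get the first claim, and obtain the second claim by expanding $\kappa^{}_4=\exp\bigl(\tfrac{2\pi i}{4}\ad\kappa\bigr)$, where the sign from anticommutation cancels against the sign produced by conjugating $i$. Your version buys conceptual economy and makes the second statement an automatic corollary of the first (the cancellation of the two signs is exactly the point), at the cost of relying on the exponential expression for $\kappa^{}_4$ and on the automorphism property of $\tau\lambda_\omega$; the paper's version stays entirely at the level of explicit formulas and proves the two statements independently. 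The only part you leave as bookkeeping, the verification of $\tau\lambda_\omega\kappa=-\kappa$, is at the same level of detail as what the paper itself leaves unproved, so there is no genuine gap.
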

\begin{proof}
    First, we denote $ \varPhi(-2E_1 \vee E_1, 0,0, -1) $ by $ \varPhi_\kappa $ only in this lemma: $ \varPhi_\kappa:=\varPhi(-2E_1 \vee E_1, 0,0, -1) $.
    Then, note that $ \tau\lambda\varPhi_\kappa=-\varPhi_\kappa\tau\lambda $, it follows that
    \begin{align*}
        &\quad(\ad\kappa)\tau\lambda_\omega(\varPhi,P,Q,r,s,t)
        \\
        &=\ad\kappa(\tau\lambda\varPhi\lambda^{-1}\tau,\tau\lambda Q,-\tau\lambda P, -\tau r, -\tau t,-\tau s)
        \\
        &=([\varPhi_\kappa,\tau\lambda\varPhi\lambda^{-1}\tau],\varPhi_\kappa(\tau\lambda Q)-\tau\lambda Q,-\varPhi_\kappa(\tau\lambda P)-\tau\lambda P,0,2\tau t,-2\tau s)
        \\
        &=-(\tau\lambda[\varPhi_\kappa,\varPhi]\lambda^{-1}\tau,\tau\lambda(\varPhi_\kappa Q+Q),-\tau\lambda(\varPhi_\kappa P-P),0,-2(\tau t), 2(\tau s))
        \\
        &=-\tau\lambda_\omega(\ad\kappa)(\varPhi,P,Q,r,s,t), \,\,(\varPhi,P,Q,r,s,t) \in \mathfrak{e}_8^C,
    \end{align*}
    that is, $ (\ad\kappa)\tau\lambda_\omega=-\tau\lambda_\omega(\ad\kappa) $. The first half is proved.

    The second half is easily proved by doing straightforward computation under the definitions of $ \tau\lambda_\omega $ and $ \kappa^{}_4 $.
\end{proof}

Here, in order to prove the proposition below , we use the following lemma.

\begin{lem}\label{lem 8.4}
   {\rm (1)} For $ R \in ({\mathfrak{e}_8}^C)_{-2} $, the formula  $(\tau\lambda_\omega) \zeta_\delta R={\zeta_\delta}^{-1} (\tau\lambda_\omega) R $ holds.
\vspace{1mm}

  {\rm (2)} For $\beta \in Spin(14,C)$, $\beta$ satisfies the formula ${\zeta_\delta}^{-1}\beta R'=\beta{\zeta_\delta}^{-1} R', R' \in
  ({\mathfrak{e}_8}^C)_2 $.
\end{lem}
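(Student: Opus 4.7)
The plan is to prove part (1) by a direct computation on an arbitrary element of $({\mathfrak{e}_8}^C)_{-2}$, and to reduce part (2) to a formal manipulation of the defining commutation property of $(G_{14})^C \cong Spin(14,C)$.

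For (1), I would first observe that $\tau\lambda_\omega$ interchanges $({\mathfrak{e}_8}^C)_{-2}$ and $({\mathfrak{e}_8}^C)_{2}$: this is an immediate consequence of $(\ad\kappa)\tau\lambda_\omega=-\tau\lambda_\omega(\ad\kappa)$ from Lemma \ref{lem 8.3}, since an eigenvector of $\ad\kappa$ with eigenvalue $-2$ is sent to one with eigenvalue $+2$. In particular, both sides of the claimed identity lie in $({\mathfrak{e}_8}^C)_{-2}$, so the assertion makes sense as an equality there. Starting from an arbitrary
$$
R=(\varPhi(0,\upsilon_1 E_1,0,0),(\xi_1 E_1, \eta_2 E_2+\eta_3 E_3+F_1(y_1),\xi,0),0,0,s,0),
$$
I would compute the left-hand side by first applying the displayed formula for $\zeta_\delta$ (landing in $({\mathfrak{e}_8}^C)_2$) and then the explicit action of $\tau\lambda_\omega$ on a six-tuple; for the right-hand side I would first apply $\tau\lambda_\omega$ to $R$ and then use the displayed formula for ${\zeta_\delta}^{-1}$. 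A slot-by-slot comparison of the resulting six-tuples then establishes the identity.

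Part (2) is a formal consequence of the definition of $(G_{14})^C$. Let $\beta\in Spin(14,C)=(G_{14})^C$ and $R'\in({\mathfrak{e}_8}^C)_2$. Since $\beta$ commutes with $\ad\kappa$, it preserves each eigenspace $({\mathfrak{e}_8}^C)_{\pm 2}$. Set $R:={\zeta_\delta}^{-1}R'\in({\mathfrak{e}_8}^C)_{-2}$; then by the defining property of $(G_{14})^C$,
$$
\zeta_\delta(\beta R)=\beta(\zeta_\delta R)=\beta R',
$$
and because $\beta R\in({\mathfrak{e}_8}^C)_{-2}$ we may apply ${\zeta_\delta}^{-1}$ to both sides, obtaining $\beta R={\zeta_\delta}^{-1}\beta R'$, i.e.\ $\beta\,{\zeta_\delta}^{-1}R'={\zeta_\delta}^{-1}\beta R'$, as required.

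The only real work is the verification in (1); there is no conceptual obstacle, but I would need to track signs and the positions of the entries $\upsilon_1,\xi_1,\eta_2,\eta_3,y_1,\xi,s$ carefully under the combined effect of $\tau\lambda$, $\delta$, and the exchange of the $P$- and $Q$-slots. I would guard against slips by writing the two six-tuples in parallel columns and comparing entry by entry.
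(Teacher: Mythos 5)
Your proposal is correct and follows essentially the same route as the paper: part (1) by the same slot-by-slot computation of $(\tau\lambda_\omega)\zeta_\delta R$ and ${\zeta_\delta}^{-1}(\tau\lambda_\omega)R$ on a general element of $({\mathfrak{e}_8}^C)_{-2}$ (the paper simply carries out the computation you outline, arriving at the common value $(\varPhi(0,\tau s E_1,0,0),(-\tau\xi E_1,\tau\eta_3E_2+\tau\eta_2E_3-F_1(\tau y_1),-\tau\xi_1,0),0,0,\tau\upsilon_1,0)$), and part (2) by the same substitution $R={\zeta_\delta}^{-1}R'$ in the defining relation $\zeta_\delta\beta R=\beta\zeta_\delta R$ of $(G_{14})^C$. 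Your preliminary remark that $\tau\lambda_\omega$ swaps the $\pm2$-eigenspaces of $\ad\kappa$, via Lemma \ref{lem 8.3}, is a correct and harmless addition.
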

\begin{proof}
  (1) Under the definition of $ \tau\lambda_\omega $ and the mappings $ \zeta_{\delta} ,{\zeta_{\delta}}^{-1}$ mentioned above, we do straightforward computation of both sides:
\begin{align*}
&\quad (\tau\lambda_\omega) \zeta_\delta R
\\
&=(\tau\lambda_\omega)\zeta_\delta(\varPhi(0,\upsilon_1E_1,0,0),(\xi_1E_1,\eta_2E_2+\eta_3E_3+F_1(y_1),\xi,0),0,0,s,0)
\\
&=\tau\lambda_\omega(\varPhi(0,0,-sE_1,0),0,(-\eta_3E_2-\eta_2E_3+F_1(y_1),-\xi E_1,0,-\xi_1),0,0,-\upsilon_1)
\\
&=(\tau\lambda\varPhi(0,0,-sE_1,0)\lambda^{-1}\tau,\tau\lambda(-\eta_3E_2-\eta_2E_3+F_1(y_1),-\xi E_1,0,-\xi_1),0,0,\tau\upsilon_1,0)
\\
&=(\varPhi(0,\tau sE_1,0,0),(-\tau\xi E_1,\tau\eta_3E_2+\tau\eta_2E_3-F_1(\tau y_1),-\tau\xi_1,0),0,0,\tau\upsilon_1,0),
\\[1mm]
&\quad {\zeta_\delta}^{-1} (\tau\lambda_\omega) R
\\
&={\zeta_\delta}^{-1}(\tau\lambda\varPhi(0,\upsilon_1E_1,0,0)\lambda^{-1}\tau,0,-\tau\lambda(\xi_1E_1,\eta_2E_2+\eta_3E_3+F_1(y_1),\xi,0),0,0,
-\tau s)
\\
&={\zeta_\delta}^{-1}(\varPhi(0,0,-\tau\upsilon_1E_1,0),0,(-\tau\eta_2E_2-\tau\eta_3E_3-F_1(\tau y_1),\tau\xi_1E_1,0,\tau\xi),0,0,-\tau s)
\\
&=(\varPhi(0,\tau sE_1,0,0),(-\tau\xi E_1,\tau\eta_3E_2+\tau\eta_2E_3-F_1(\tau y_1),-\tau\xi_1,0),0,0,\tau\upsilon_1,0).
\end{align*}
With above, the required formula is proved.

\vspace{2mm}
  (2) Let $ Spin(14,C) $ as the group $ (G_{14})^C $. In the formula $ \zeta_\delta \beta R=\beta\zeta_\delta R,R \in ({\mathfrak{e}_8}^C)_{-2} $, the required formula is proved by setting $ \zeta_{\delta} R=R', R' \in ({\mathfrak{e}_8}^C)_2 $.

\end{proof}


\begin{prop}\label{prop 8.5}
    The $C$-linear transformation $\tau\lambda_\omega$ induces
    the involutive inner automorphism of the group
    $Spin(14,C)${\rm :} $\tilde{\tau\lambda}_\omega(\beta)=(\tau\lambda_\omega)
     \beta (\lambda_\omega \tau), \beta \in Spin(14,C)$.
\end{prop}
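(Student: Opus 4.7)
The plan is to verify that conjugation by $\tau\lambda_\omega$ restricts to an involutive automorphism of $Spin(14,C) = (G_{14})^C$. The essential observation is that although Lemma \ref{lem 8.3} shows $\tau\lambda_\omega$ anti-commutes with $\ad\kappa$ (so $\tau\lambda_\omega$ itself does not lie in $(G_{14})^C$), the two sign changes cancel once it is sandwiched around an element of $(G_{14})^C$. First I will note that $(\tau\lambda_\omega)(\lambda_\omega\tau)=1$, since $\tau$ and $\lambda_\omega$ are involutions by their definitions in \cite[Preliminaries]{miya2}. Thus $\lambda_\omega\tau=(\tau\lambda_\omega)^{-1}$, the map $\tilde{\tau\lambda}_\omega$ is a genuine conjugation, and involutivity is immediate.

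The central task is to show $\gamma:=(\tau\lambda_\omega)\beta(\lambda_\omega\tau)\in(G_{14})^C$ whenever $\beta\in(G_{14})^C$. For the $\ad\kappa$-condition, Lemma \ref{lem 8.3} gives $(\ad\kappa)(\tau\lambda_\omega)=-(\tau\lambda_\omega)(\ad\kappa)$; multiplying by $(\lambda_\omega\tau)$ on both sides yields the companion relation $(\ad\kappa)(\lambda_\omega\tau)=-(\lambda_\omega\tau)(\ad\kappa)$. Combined with $(\ad\kappa)\beta=\beta(\ad\kappa)$, the two sign changes cancel and $(\ad\kappa)\gamma=\gamma(\ad\kappa)$.

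For the $\zeta_\delta$-condition --- the technical heart of the argument --- I will combine Lemma \ref{lem 8.4}(1), Lemma \ref{lem 8.4}(2) and two derived identities on the $\pm 2$ eigenspaces of $\ad\kappa$. Since $\tau\lambda_\omega$ swaps $({\mathfrak{e}_8}^C)_{-2}$ and $({\mathfrak{e}_8}^C)_{2}$, substituting $R=\zeta_\delta^{-1}S$ with $S\in({\mathfrak{e}_8}^C)_{2}$ into Lemma \ref{lem 8.4}(1) yields $\zeta_\delta(\tau\lambda_\omega)S=(\tau\lambda_\omega)\zeta_\delta^{-1}S$ on $({\mathfrak{e}_8}^C)_{2}$, while inverting the Lemma \ref{lem 8.4}(1) identity as maps $({\mathfrak{e}_8}^C)_{-2}\to({\mathfrak{e}_8}^C)_{-2}$ yields $\zeta_\delta^{-1}(\lambda_\omega\tau)R=(\lambda_\omega\tau)\zeta_\delta R$ for $R\in({\mathfrak{e}_8}^C)_{-2}$. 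Applying these in sequence together with Lemma \ref{lem 8.4}(2) (which moves $\zeta_\delta^{-1}$ past $\beta$) gives
\begin{align*}
\zeta_\delta\gamma R
&=\zeta_\delta(\tau\lambda_\omega)\beta(\lambda_\omega\tau)R=(\tau\lambda_\omega)\zeta_\delta^{-1}\beta(\lambda_\omega\tau)R\\
&=(\tau\lambda_\omega)\beta\zeta_\delta^{-1}(\lambda_\omega\tau)R=(\tau\lambda_\omega)\beta(\lambda_\omega\tau)\zeta_\delta R=\gamma\zeta_\delta R,
\end{align*}
confirming $\gamma\in(G_{14})^C$.

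The main obstacle is bookkeeping: at each stage one must track which eigenspace the intermediate term lives in and invoke precisely the right variant of Lemma \ref{lem 8.4}(1). Once the two companion identities on $({\mathfrak{e}_8}^C)_{\pm 2}$ are set up, the computation is mechanical. The ``inner'' designation of the induced automorphism then follows by recognizing that, although $\tau\lambda_\omega$ is not itself an element of $Spin(14,C)$, the resulting automorphism lies in the identity component of $\Aut(Spin(14,C))$, rather than realizing the graph involution of $\mathfrak{so}(14,C)$; equivalently, on the Lie algebra level the induced map is given by $\ad$ of some element of $(\mathfrak{g}_{14})^C$.
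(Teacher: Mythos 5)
Your proposal is correct and follows essentially the same route as the paper: one checks that $g(\beta)=(\tau\lambda_\omega)\beta(\lambda_\omega\tau)$ again lies in $(G_{14})^C$, handling the $\ad\kappa$-condition by the anti-commutation formula of Lemma \ref{lem 8.3} (the two sign changes cancelling) and the $\zeta_\delta$-condition by combining Lemma \ref{lem 8.4} (1) and (2) with the eigenspace bookkeeping on $({\mathfrak{e}_8}^C)_{\pm 2}$, exactly as in the paper's computation. Your derived companion identities are just rearrangements of Lemma \ref{lem 8.4} (1) using that $\tau\lambda_\omega$ is an involution, which the paper also uses implicitly, so there is no substantive difference.
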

\begin{proof}
    Let $Spin(14,C)$ as the group
    $(G_{14})^C$.
     We define a mapping $ g: (G_{14})^C \to  (G_{14})^C $ by
\begin{align*}
    g(\beta)=(\tau\lambda_\omega) \beta (\lambda_\omega \tau).
\end{align*}

    We will prove $g(\beta) \in (G_{14})^C$. First, it follows from the first half of Lemma \ref{lem 8.3} that
    \begin{align*}
        (\ad\kappa)g(\beta)&=(\ad\kappa)(\tau\lambda_\omega) \beta (\lambda_\omega \tau)
        \\
        &=-(\tau\lambda_\omega) (\ad\kappa)\beta (\lambda_\omega \tau)
        \\
        &=-(\tau\lambda_\omega) \beta(\ad\kappa) (\lambda_\omega \tau)
       \\
       &=-(\tau\lambda_\omega) \beta(-\lambda_\omega \tau (\ad\kappa))
        \\
        &=(\tau\lambda_\omega) \beta(\lambda_\omega \tau) (\ad\kappa)
        \\
        &=g(\beta)(\ad\kappa),
    \end{align*}
    that is, $ (\ad\kappa)g(\beta)=g(\beta)(\ad\kappa) $.

    Next, as for
    ${\zeta_\delta} g(\beta) R=g(\beta){\zeta_\delta}
    R, R \in ({\mathfrak{e}_8}^C)_{-2} $, it follows from Lemma \ref{lem 8.4} (1), (2) that
    \begin{align*}
    {\zeta_\delta} g(\beta) R
    &={\zeta_\delta}(\tau\lambda_\omega \beta
    \lambda_\omega \tau)R \;\;\;\;\;((\tau\lambda_\omega \beta
    \lambda_\omega \tau)R \in ({\mathfrak{e}_8}^C)_{-2} )
    \\
    &=(\tau\lambda_\omega){\zeta_\delta}^{-1}
    (\tau\lambda_\omega) (\tau\lambda_\omega \beta \lambda_\omega
    \tau R)
   \\
   &=\tau\lambda_\omega{\zeta_\delta}^{-1}  \beta
    \lambda_\omega \tau R \;\;\;\;\;(\lambda_\omega
    \tau R \in ({\mathfrak{e}_8})_2)
   \\
    &=\tau\lambda_\omega \beta
    {\zeta_\delta}^{-1}\lambda_\omega \tau
    R
    \\
    &=\tau\lambda_\omega \beta \tau \lambda_\omega
    \zeta_\delta R
    \\
    &=(\tau\lambda_\omega \beta  \lambda_\omega \tau)
    \zeta_\delta R
    \\
    &=g(\beta)\zeta_\delta R,
    \end{align*}
    that is, $ {\zeta_\delta} g(\beta) R=g(\beta)\zeta_\delta R, R \in ({\mathfrak{e}_8}^C)_{-2} $.

    Hence  we have $ g(\beta) \in (G_{14})^C $. The proof of this proposition is completed.
\end{proof}

From Proposition \ref{prop 8.5}, we can define a subgroup $
(Spin(14, C))^{\tau\lambda_\omega}$ of $
Spin(14,C) $:
\begin{align*}
(Spin(14, C))^{\tau\lambda_\omega}=\left\lbrace \beta \in
Spin(14,C) \relmiddle{|}
(\tau\lambda_\omega) \beta=\beta(\tau\lambda_\omega) \right
\rbrace .
\end{align*}

We prove the following lemma needed in the proof of theorem below .

\begin{lem}\label{lem 8.6}
    The Lie algebra $(\mathfrak{spin}(14, C))^{\tau\lambda_
    \omega}$
    of the group $(Spin(14, C))^{\tau\lambda_\omega}$ is given
    by
    \begin{align*}
    &\quad (\mathfrak{spin}(14, C))^{\tau\lambda_\omega}=\left\lbrace R \in
    (\mathfrak{g}_{14})^C \relmiddle{|}(\tau\lambda_\omega) R=R \right\rbrace
     \\
    &=\left\{R=(\varPhi, P, -\tau\lambda P, r, 0, 0) \left|\!\!
    \begin{array}{l}
    \,\,\varPhi=\varPhi(D+\tilde{A}(d_1)
    \\
    \qquad +i(\tau_1 E_1+\tau_2 E_2
     +\tau_3 E_3+F_1(t_1))^\sim,
     A, -\tau A,\nu),
    \\
    \qquad\quad D \in \mathfrak{so}(8), d_1, t_1 \in \mathfrak{C},
    \\
    \qquad\quad \tau_k \in \R, \tau_1+\tau_2+\tau_3=0,
    \\
    \qquad\quad A=\alpha_2 E_2+\alpha_3E_3+F_1(a_1) \in \mathfrak{J}^C,
    \nu \in i\R,
    \vspace{1mm}\\
    \,\,P=(\rho_2 E_2+\rho_3 E_3 +F_1(p_1),\rho_1 E_1,0,\rho) \in \mathfrak{P}^C,
    \vspace{1mm}\\
    \,\,r \in i\R,
    \vspace{1mm}\\
    \,\, i\tau_1+(2/3)\nu+2r=0
    \end{array}
    \right. \right \}.
    \end{align*}
    where the Lie algebra $ (\mathfrak{g}_{14})^C $ is defined
    as $ \mathfrak{g}_{14} $ in \cite[Lemma 5.8.3]{miya0}.

    In particular, we have $\dim((\mathfrak{spin}(14,C)^{\tau\lambda_
    \omega})=(28+16+2+20+1)+24+1-1=91$.
\end{lem}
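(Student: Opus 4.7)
The plan is to compute the fixed-point locus of $\tau\lambda_\omega$ directly on the already-known complex Lie algebra $(\mathfrak{g}_{14})^C$ given in \cite[Lemma 5.8.3]{miya0}, then to verify the dimension count.

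First I would work out the explicit action of $\tau\lambda_\omega$ on $\mathfrak{e}_8^C$. Using the definition in \cite[Preliminaries (p.96)]{miya2}, one has
\begin{align*}
(\tau\lambda_\omega)(\varPhi, P, Q, r, s, t)
=(\tau\lambda\varPhi\lambda^{-1}\tau,\, \tau\lambda Q,\, -\tau\lambda P,\, -\tau r,\, -\tau t,\, -\tau s).
\end{align*}
This can be read off from the intermediate formulas already produced in the proof of Lemma \ref{lem 8.4}(1). Since the elements of $(\mathfrak{g}_{14})^C$ already have $s=t=0$, the last two equations are vacuous, and the fixed-point condition collapses to:
(i) $\tau\lambda\varPhi\lambda^{-1}\tau=\varPhi$, (ii) $Q=-\tau\lambda P$ (which automatically entails $P=\tau\lambda Q$ because $(\tau\lambda)^2=-1$ on the relevant subspaces), and (iii) $-\tau r=r$, i.e.\ $r\in i\bm{R}$.

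Next I would decompose condition (i) along the $\varPhi=\varPhi(\phi,A,B,\nu)$ structure of $\mathfrak{e}_7^C$. A direct computation using the known formula for the action of $\tau\lambda$ on each component yields $B=-\tau A$ and $\nu\in i\bm{R}$ for the outer slots, and reduces (i) on $\phi\in\mathfrak{e}_6^C$ to the reality condition $\tau\lambda\phi\lambda^{-1}\tau=\phi$. Writing $\phi=D+\tilde A(d_1)+\tilde A(d_2)+\tilde A(d_3)+(\tau_1 E_1+\tau_2 E_2+\tau_3 E_3+F_1(t_1)+F_2(t_2)+F_3(t_3))^{\sim}$ as in $(\mathfrak{g}_{14})^C$ (where $d_2=d_3=0$, $t_2=t_3=0$ already hold), and noting that $\tau\lambda$ acts by complex conjugation on the $\mathfrak{so}(8)$-part and the $\tilde A$-parts while introducing an extra sign $-1$ (i.e.\ an $i$ becomes real) on the $(\cdot)^\sim$-part because of the $E_i$ versus $F_i$ transformation rule, one gets $D\in\mathfrak{so}(8)$, $d_1,t_1\in\mathfrak{C}$, and $\tau_k\in\bm{R}$ with the already present trace condition $\tau_1+\tau_2+\tau_3=0$. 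The constraint $i\tau_1+(2/3)\nu+2r=0$ is inherited unchanged from the defining constraint $\tau_1+(2/3)\nu+2r=0$ of $(\mathfrak{g}_{14})^C$ after substituting $\tau_1\mapsto i\tau_1$.

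Finally, I would verify the dimension. Counting real dimensions in each slot: $D$ gives $28$, $(d_1,t_1)$ gives $8+8=16$, the $\tau_k$'s with the trace relation give $2$, the pair $(A,-\tau A)$ with $A=\alpha_2 E_2+\alpha_3 E_3+F_1(a_1)$ complex gives $2\cdot 2+2\cdot 8=20$, $\nu\in i\bm R$ gives $1$, the parameters in $P$ give $12$ complex $=24$ real (with $Q$ fully determined by $P$), $r\in i\bm R$ gives $1$, and the final linear constraint subtracts $1$. Summing, $(28+16+2+20+1)+24+1-1=91$, as claimed.

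The main obstacle I expect is book-keeping the action of $\tau\lambda$ on the two distinct summands of $\mathfrak{e}_6^C$: the $\tilde A(d_i)$ part and the $(\cdot)^{\sim}$ part transform with different sign/$i$ conventions under $\tau\lambda$, which is precisely what produces the mixed reality conditions ``$d_1\in\mathfrak{C}$ without an $i$'' versus ``$\tau_k\in\bm{R}$ appearing inside an overall $i(\cdot)^{\sim}$''. Once that calculation is done carefully, the rest is routine and the dimension count falls out directly.
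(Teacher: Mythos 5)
Your proposal is correct and follows essentially the same route as the paper: impose the explicit action $(\tau\lambda_\omega)(\varPhi,P,Q,r,0,0)=(\tau\lambda\varPhi\lambda^{-1}\tau,\tau\lambda Q,-\tau\lambda P,-\tau r,0,0)$ on a general element of $(\mathfrak{g}_{14})^C$, which yields the conditions $\tau\lambda\varPhi(\phi,A,B,\nu)\lambda^{-1}\tau=\varPhi(\phi,A,B,\nu)$, $Q=-\tau\lambda P$ and $r\in i\bm{R}$, and then resolve the first condition componentwise via the formula $\tau\lambda\varPhi(\phi,A,B,\nu)\lambda^{-1}\tau=\varPhi(-\tau\,{}^t\phi\tau,-\tau B,-\tau A,-\tau\nu)$, exactly as in the paper's proof. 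Your only additions are making explicit that the two conditions on $P$ and $Q$ coincide because $(\tau\lambda)^2=-1$ and writing out the dimension count, which the paper leaves implicit.
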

\begin{proof}
    Let $ R=(\varPhi(\phi, A,B,\nu),P,Q,r,0,0) \in (\mathfrak{g}_{14})^C $, where
    \begin{align*}
    &\phi=D+\tilde{A}_1(d_1)+(\tau_1E_1+\tau_2E_2+\tau_3E_3+F_1(t_1))^\sim, D \in \mathfrak{so}(8,C),
    \\
    &\hspace{45mm} d_1,t_1 \in \mathfrak{C}^C, \tau_k \in C,\tau_1+\tau_2+\tau_3=0,
    \\
    &A=\alpha_2E_2+\alpha_3E_3+F_1(a_1) \in \mathfrak{J}^C, \alpha_k \in C, a_1 \in \mathfrak{C}^C,
    \\
    &B=\beta_2E_2+\beta_3E_3+F_1(b_1) \in \mathfrak{J}^C, \beta_k \in C, b_1 \in \mathfrak{C}^C,
    \\
    &\nu \in C,
    \\
    &P=(\rho_2 E_2+\rho_3 E_3 +F_1(p_1),\rho_1 E_1,0,\rho) \in \mathfrak{P}^C,\rho_k,\rho \in C, p_1 \in \mathfrak{C}^C,
    \\
    &Q=(\zeta_1E_1,\zeta_2 E_2+\zeta_3 E_3 +F_1(z_1),\zeta_1 E_1,\zeta,0) \in \mathfrak{P}^C,\zeta_k,\zeta \in C, z_1 \in \mathfrak{C}^C,
    \\
    &r \in C,
    \\
    &\tau_1+(2/3)\nu+2r=0.
    \end{align*}
  Then it follows from
   \begin{align*}
     (\tau\lambda_\omega)R&=(\tau\lambda_\omega)(\varPhi(\phi, A,B,\nu),P,Q,r,0,0,)
     \\
     &=(\tau\lambda\varPhi(\phi, A,B,\nu)\lambda^{-1}\tau,\tau\lambda Q,-\tau\lambda P,-\tau r,0,0)
   \end{align*}
   that
   \begin{align*}
     \tau\lambda\varPhi(\phi, A,B,\nu)\lambda^{-1}\tau=\varPhi(\phi, A,B,\nu),\;\;\tau\lambda Q=P,\;\;-\tau\lambda P=Q,\;\;-\tau r=r.
   \end{align*}

   Hence, using the formula $ \tau\lambda\varPhi(\phi, A,B,\nu)\lambda^{-1}\tau=\varPhi(-\tau\,{}^t\phi\tau,-\tau B,-\tau A,-\tau \nu) $, the required result is obtained.
\end{proof}

  From Lemma \ref{lem 8.6}, we have
 \begin{align*}
   (\mathfrak{spin}(14, C))^{\tau\lambda_\omega} \subset \mathfrak{e}_8=\left\lbrace R=(\varPhi,P,-\tau\lambda P,r,s,-\tau s) \relmiddle{|}
   \begin{array}{l}
    \varPhi \in \mathfrak{e}_7,P \in \mathfrak{P}^C,
    \\
    r \in i\R,s \in C
   \end{array} \right\rbrace .
 \end{align*}

Now, we construct the spinor group $ Spin(14) $ in $ E_8 $.

\begin{thm}\label{thm 8.8}
    The group $(Spin(14, C))^{\tau\lambda_\omega}$ is
    isomorphic to the group $Spin(14)$
    {\rm :} \\
   $(Spin(14,C))^{\tau\lambda_\omega} \cong Spin(14)$.
\end{thm}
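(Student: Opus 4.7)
The plan is to identify $(Spin(14,C))^{\tau\lambda_\omega}$ with the compact real form $Spin(14)$ sitting inside $Spin(14,C)$. The motivating observation is that the antilinear involution $\tau\lambda_\omega$ cuts out the compact real form $\mathfrak{e}_8$ inside ${\mathfrak{e}_8}^C$ (indeed the description of $\mathfrak{e}_8$ given just after Lemma \ref{lem 8.6} is exactly the fixed-set condition for $\tau\lambda_\omega$), so that $E_8$ is characterized inside ${E_8}^C$ as the subgroup commuting with $\tau\lambda_\omega$, and hence $(Spin(14,C))^{\tau\lambda_\omega}$ coincides with $Spin(14,C) \cap E_8$.

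First I would verify the Lie algebra match. By Lemma \ref{lem 8.6}, the fixed algebra $(\mathfrak{spin}(14,C))^{\tau\lambda_\omega}$ is a real form of $\mathfrak{spin}(14,C)$ of real dimension $91 = \dim \mathfrak{spin}(14)$, and it is contained in the compact Lie algebra $\mathfrak{e}_8$. Consequently it is a compact real form of $\mathfrak{spin}(14, C)$, hence isomorphic to $\mathfrak{spin}(14)$ as real Lie algebras. For the group-level identification, in the elementary style of the paper I would construct an explicit homomorphism $\varphi : Spin(14) \to (Spin(14,C))^{\tau\lambda_\omega}$ by realizing $Spin(14)$ as the subgroup of $E_8$ cut out by the very same defining data $(\ad \kappa,\,\zeta_\delta)$ that defines $(G_{14})^C$ inside ${E_8}^C$, and then apply the homomorphism theorem. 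The kernel of $\varphi$ would be contained in the covering kernel $\{1, \sigma\} \cong \Z_2$ of $Spin(14,C) \to SO(14,C)$ from \cite[Proposition 5.8.7]{miya0}, and the dimension match forces surjectivity once connectedness has been secured.

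The main obstacle is establishing the connectedness of $(Spin(14,C))^{\tau\lambda_\omega}$ within the elementary framework of the paper. This is most naturally handled by replaying, in the compact setting, the sphere-fibration induction used for $(G_{14})^C \cong Spin(14,C)$ in \cite[Proposition 5.8.7]{miya0}: one identifies a real $13$-sphere inside a $\tau\lambda_\omega$-invariant real subspace of $({\mathfrak{e}_8}^C)_{-2}$ on which $(Spin(14,C))^{\tau\lambda_\omega}$ acts transitively with isotropy a compact group of type $Spin(13)$, and then descends through $Spin(12), Spin(11), \dots$ via analogous sphere fibrations to a base group whose connectedness is already known from \cite{iy0}. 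Together with the Lie algebra isomorphism and the simple connectivity of $Spin(14)$, this yields the required isomorphism $(Spin(14,C))^{\tau\lambda_\omega} \cong Spin(14)$.
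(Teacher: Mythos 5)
Your outline is sound at the Lie--algebra level, but the group--level identification---which is where the actual content of the theorem lies---has a genuine gap, and as written it is circular. The homomorphism you propose, $\varphi : Spin(14) \to (Spin(14,C))^{\tau\lambda_\omega}$, ``realizing $Spin(14)$ as the subgroup of $E_8$ cut out by $(\ad\kappa,\zeta_\delta)$,'' has no content: that subgroup of $E_8$ is by definition $(G_{14})^C \cap E_8 = (Spin(14,C))^{\tau\lambda_\omega}$ itself, so you are mapping the group in question identically onto itself, and the homomorphism theorem proves nothing; you never produce an independent model of $Spin(14)$ together with a map onto the fixed subgroup. Moreover, a connected compact group whose Lie algebra is the compact form $\mathfrak{so}(14)$ need not be $Spin(14)$: since the center of $Spin(14)$ is $\Z_4$, it could just as well be $SO(14)$ or $PSO(14)$, and nothing in your proposal excludes these, not even the remark that ``the kernel is contained in $\{1,\sigma\}$.'' This is exactly the issue the paper's proof resolves: it constructs the real $14$-dimensional space $V^{14}=\left\lbrace R\in({\mathfrak{e}_8}^C)_{-2} \relmiddle{|} (\tau\lambda_\omega)\zeta_\delta R=-R\right\rbrace$ with the positive definite invariant form $(R,R)_\zeta=\frac{1}{30}B_8(\zeta_\delta R,R)$, restricts the action to obtain $\pi:(Spin(14,C))^{\tau\lambda_\omega}\to SO(14)$, computes $\Ker\,\pi=\{1,\sigma\}\cong\Z_2$ by forcing an element of the kernel down the chain $E_7\supset E_6\supset F_4\supset Spin(8)$ and invoking triality, and gets surjectivity from the dimension count $91=\dim(\mathfrak{so}(14))$ (Lemma \ref{lem 8.6}) together with connectedness; the group is then a connected double cover of $SO(14)$, hence $Spin(14)$. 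Some such argument pinning down the isomorphism class---either this explicit $2{:}1$ surjection onto $SO(14)$, or alternatively a proof that the fixed subgroup is simply connected (say as a maximal compact subgroup of the simply connected $Spin(14,C)$, an argument you do not make and which lies outside the paper's elementary framework)---is indispensable and is missing from your proposal.

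On connectedness, which you single out as the main obstacle: the sphere-fibration induction you sketch would require compact analogues of all the transitivity lemmas and lower spinor groups and is a long detour. The paper settles it in one line: $(G_{14})^C\cong Spin(14,C)$ is simply connected and $\tilde{\tau\lambda}_\omega$ is an automorphism of it (Proposition \ref{prop 8.5}), so the fixed point subgroup is connected by the theorem of Ra\v{s}evskii \cite{ra} already used repeatedly in this paper; this also immediately places $(Spin(14,C))^{\tau\lambda_\omega}$ inside $E_8$, as you intend. So connectedness is not where the difficulty lies; the missing piece is the identification of the covering type described above.
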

\begin{proof}
    First, since the group $ Spin(14,C) $ is the simply connected Lie group, the group $ (Spin(14, C))^{\tau\lambda_\omega} $ is connected ((\cite[Preliminaries Lemma 2.2]{miya2}) in \cite{ra}). Hence, since both of the groups $ (Spin(14, C))^{\tau\lambda_\omega} $ and $ E_8 $ are connected and $ (\mathfrak{spin}(14, C))^{\tau\lambda_\omega} \subset \mathfrak{e}_8 $, we confirm $ (Spin(14, C))^{\tau\lambda_\omega} \subset E_8 $.

    We define a $14$-dimensional $\R$-vector space
    $V^{14}$ by
    \begin{align*}
    V^{14}&=\left\lbrace  R \in ({\mathfrak{e}_8}^C)_{-2} \relmiddle{|}
    (\tau\lambda_\omega)\zeta_\delta R=-R  \right\rbrace
    \\
    &\!=\left\lbrace \! R=(\varPhi(0, \upsilon E_1,0,0), (\xi E_1,
    \eta E_2-\tau\eta E_3+F_1(y), \tau \xi,0),0,0,-\tau \upsilon,0)\!
    \relmiddle{|}\!\!\!
    \begin{array}{l}
    \upsilon \in C,
    \\
    \xi \in C,
    \\
    \eta \in C,
    \\
    y \in \mathfrak{C}
    \end{array} \!\!\right\rbrace
    \end{align*}
 with the norm
 \begin{align*}
   (R, R)_\zeta=\dfrac{1}{30}B_8(\zeta_\delta R,R)=4(\tau \upsilon)\upsilon+(\tau \eta)\eta+y\overline{y}+(\tau \xi)\xi,
 \end{align*}
 where $ B_8 $ is the
 Killing form of $ {\mathfrak{e}_8}^C $ (as for the Killing form $ B_8 $, see \cite[Theorem 5.3.2]{iy0} in detail).
 Obviously, the group $(Spin(14, C))^{\tau\lambda_\omega}$ acts on $V^{14}$.

 Here, let the orthogonal group
 \begin{align*}
    O(14)=O(V^{14})=\left\lbrace \alpha \in \Iso_{\bm{R}}(V^{14})\relmiddle{|}(\alpha R, \alpha
    R)_\zeta=(R,R)_\zeta \right\rbrace.
 \end{align*}
 We consider the restriction $ \alpha\bigm|_{V^{14}} $ of $ \alpha \in (Spin(14, C))^{\tau\lambda_\omega} $ to $ V^{14} $, then we see $ \alpha\bigm|_{V^{14}} \in O(14)=O(V^{14}) $. Indeed, the group $(Spin(14, C))^{\tau\lambda_\omega}$ acts on $V^{14}$, so that we have $  \alpha\bigm|_{V^{14}} \in \Iso_{\bm{R}}(V^{14}) $. Moreover, for $ R \in V^{14} $, it follows that
\begin{align*}
(\alpha\bigm|_{V^{14}}R, \alpha\bigm|_{V^{14}}R)_\zeta
&=\dfrac{1}{30}B_8(\zeta_{\delta}\alpha\bigm|_{V^{14}}R, \alpha\bigm|_{V^{14}}R)
\\
&=\dfrac{1}{30}B_8(\zeta_\delta\alpha R,\alpha R)
\\
&=\dfrac{1}{30}B_8(\alpha\zeta_{\delta}R, \alpha R)
\\
&=\dfrac{1}{30}B_8(\zeta_{\delta}R, R)
\\
&=(R,R)_\zeta.
\end{align*}
Hence we can define a homomorphism $ \pi: (Spin(14, C))^{\tau\lambda_\omega} \to O(14)=O(V^{14}) $ by
 \begin{align*}
 \pi(\alpha)=\alpha\bigm|_{V^{14}}.
 \end{align*}
 Moreover, since the mapping $ \pi $ is continuous and the group $ (Spin(14, C))^{\tau\lambda_\omega} $ is connected, $ \pi $ induces a homomorphism
 \begin{align*}
 \pi:(Spin(14,
 C))^{\tau\lambda_\omega} \to SO(14)=SO(V^{14}).
 \end{align*}

 We will determine $ \Ker\,\pi $. First, from the
 definition of kernel, we have
 \begin{align*}
    \Ker\,\pi&=\left\lbrace \beta \in  (Spin(14,
    C))^{\tau\lambda_\omega} \relmiddle{|} \pi(\beta)=1 \right\rbrace
    \\
    &=\left\lbrace \beta \in  (Spin(14, C))^{\tau\lambda_\omega}\relmiddle{|}
    \beta\bigm|_{V^{14}}=1 \right\rbrace (\subset E_8).
 \end{align*}
\vspace{-2mm}

    \noindent Then let $\beta \in {\rm Ker}\,\pi$. Since it follows
    from $ (\varPhi(0,-E_1,0,0), 0,0,0,1,0), i(\varPhi(0, E_1,0,0),\allowbreak 0,0,0,1,0) \in V^{14} $ that
 \begin{align*}
    \beta(\varPhi(0, -E_1,0,0), 0,0,0,1,0)&=(\varPhi(0,
    -E_1,0,0),0,0,0,1,0),
    \\
    \beta i(\varPhi(0,E_1,0,0),0,0,0,1,0)&=i(\varPhi(0,E_1,0,0),0,0,0,1,0),
 \end{align*}
    note that $ \beta \in \Iso_{{}_C}({\mathfrak{e}_8}^C) $, we have $\beta(0,0,0,0,1,0)=(0,0,0,0,1,0)$. Here, if $\beta(0,0,0,0,1,0)=(0,0,0,0,1,0)$, we have $\beta(0,0,0,0,0,1)=(0,0,0,0,0,1)$. Indeed, it follows from $ (\tau\lambda_\omega)\beta=\beta(\tau\lambda_\omega) $ that
\begin{align*}
(0,0,0,0,0,1)&=\tau\lambda_\omega(0,0,0,0,-1,0)=-\tau\lambda_\omega(0,0,0,0,1,0)
\\
&=-(\tau\lambda_\omega)\beta(0,0,0,0,1,0)=-\beta(\tau\lambda_\omega)(0,0,0,0,1,0)
\\
&=-\beta(0,0,0,0,0,-1)
\\
&=\beta(0,0,0,0,0,1),
\end{align*}
that is, $ \beta(0,0,0,0,0,1)=(0,0,0,0,0,1) $.
Hence we have
    $\beta \in E_7 \subset (E_8)_{(0, 0,0,0,0,1)}$ (\cite[Theorem
    5.7.3]{iy0}).

    \noindent Moreover, since it follows from $ (0,(E_1,0,1,0),0,0,0,0), i(0, (-E_1,0,1,0),0,0,0,0) \allowbreak \in V^{14} $ that
    \begin{align*}
        \beta(0,(E_1,0,1,0),0,0,0,0)&=(0,(E_1,0,1,0),0,0,0,0)
        \\
        \beta i(0,(-E_1,0,1,0),0,0,0,0)&=i(0,(-E_1,0,1,0),0,0,0,0),
    \end{align*}
     again note that $ \beta \in \Iso_{{}_C}({\mathfrak{e}_8}^C) $, we have $\beta(0,(0,0,1,0),0,0,0,0)=(0,(0,0,1,0),0,0,\allowbreak 0,0)$ and $ \beta(0,(E_1,0,0,0),0,0,0,0)=(0,(E_1,0,0,0),0,0,0,0) $, that is,
    $\beta(0,0,1,0)=(0,0,1,0)$ and $ \beta E_1=E_1 $ in $\mathfrak{P}^C$. Hence we have
    $\beta \in E_6 \subset (E_7)_{(0,0,1,0)}$ (\cite[Theorem
    4.7.2]{iy0}) and $ \beta $ satisfies $ \beta E_1=E_1 $.

    \noindent In addition, since $ (0,(0,E_2-E_3,0,0),0,0,0,0),(0,(0,i(E_2+E_3),0,0),0,0,0,0) \in V^{14} $,
    $ \beta $ satisfies $ \beta
    (E_2-E_3)=E_2-E_3$ and $ \beta(E_2+E_3)=E_2+E_3 $ in $\mathfrak{J}^C$ by an argument similar to above, together with $ \beta E_1=E_1 $ above, we have $\beta E_i=E_i,i=1,2,3 $, so we have $\beta \in F_4 \subset (E_6)_E$ (\cite[Theorem 3.7.1]{iy0}), and moreover we see
$\beta \in Spin(8) \subset (F_4)_{E_1,E_2,E_3}$
    (\cite[Theorem 2.7.1]{iy0}). Here, we denote $\beta \in
    Spin(8)$  by $\beta=(\beta_1, \beta_2, \beta_3) \in
    SO(8) \times SO(8) \times SO(8)$, then since $ \beta_1 $
    satisfies the condition $\beta_1 y=y $ for all $ y \in
    \mathfrak{C}$, we have $\beta_1=1$. Hence, from the
    Principle of triality on $SO(8)$, we have the following
    \begin{align*}
    \beta=(1,1,1)=1 \quad {\rm or}\quad \beta =(1,-1,-1)=:\sigma.
    \end{align*}
    Hence we have $ \Ker\,\pi \subset \{1,\sigma \} $ and vice versa, so that $ \Ker\,\pi=\{1, \sigma  \} \cong \Z_2$.

    Finally, since the group $SO(14)$ is connected
    and $\Ker\,\pi$ is discrete, together with $ \dim((\mathfrak{spin}
    (14,C)^{\tau\lambda_\omega})=91=\dim(\mathfrak{so}(14))$
    (Lemma \ref{lem 8.6}), $\pi$ is surjective.
    Thus we have the isomorphism $(Spin(14,C))^{\tau\lambda_\omega}/\Z_2 \cong SO(14)$.

    Therefore the group $(Spin(14, C))^{\tau\lambda_\omega}$ is
    isomorphic to the group $Spin(14)$ as the universal covering group of
    $SO(14)$:
    \begin{align*}
        (Spin(14,C))^{\tau\lambda_\omega} \cong Spin(14).
    \end{align*}
\end{proof}

\if0
\begin{lem}\label{lem }
    For the mapping $\psi(a):\mathfrak{P}^C \to \mathfrak{P}^C$
    defined in Lemma \ref{lem 2.11}, we have
    $(\tau\lambda)\psi(a)$ $(\lambda^{-1}\tau)=\psi(\tau a^{-1})$.
\end{lem}
\begin{proof}
    By doing a simple computation, we can obtain the required
    result. Indeed,
    \begin{align*}
    &(\tau\lambda)\psi(a)(\lambda^{-1}\tau)(X,Y,\xi,\eta)=(\tau
    \lambda)\psi(a)(-\tau
     Y, \tau X, -\tau \eta, \tau \xi)\\
    &=(\tau\lambda) \biggl(\begin{pmatrix}
    -a\tau \eta_1 & -\tau y_3 & -\tau \overline{y}_2 \\
    -\tau \overline{y}_3 & -a^{-1}\tau \eta_2 & -a^{-1}\tau y_1 \\
    -\tau y_2 &  -a^{-1}\tau \overline{y}_1  & -a^{-1}\tau \eta_3
    \end{pmatrix},
    \begin{pmatrix}
    a^{-1}\tau \xi_1 & \tau x_3 & \tau \overline{x}_2 \\
    \tau \overline{x}_3 & a \tau \xi_2 & a \tau x_1 \\
    \tau x_2 &  a \tau \overline{x}_1  & a \tau \xi_3
    \end{pmatrix},
    -a \tau\eta, a^{-1}\tau\xi \biggr)\\
    &=\biggl(\begin{pmatrix}
    \tau a^{-1} \xi_1 &  x_3 &  \overline{x}_2 \\
    \overline{x}_3 & \tau a \xi_2 & \tau a x_1 \\
    x_2 &  \tau a \overline{x}_1  & \tau a \xi_3
    \end{pmatrix},
    \begin{pmatrix}
    \tau a \eta_1 &  y_3 &  \overline{y}_2 \\
    \overline{y}_3 & \tau a^{-1} \eta_2 & \tau a^{-1} y_1 \\
    y_2 &  \tau a^{-1} \overline{y}_1  & \tau a^{-1} \eta_3
    \end{pmatrix},
    \tau a^{-1}\xi, \tau a \eta \biggr)\\
    &=\psi(\tau a^{-1}).
    \end{align*}
\end{proof}

\noindent Note that the mapping $\psi(a): \mathfrak{P}^C \to \mathfrak{P}^C$ coincides with $\phi\bigl(\diag(a,a^{-1}))$,
where $\phi: SL(2, C) \to {E_7}^C$ is defined in \cite
[Proposition 4.6.11]{iy2}.
\fi

Let the mapping $ \phi:C^* \to {E_8}^C $ defined in \cite[Subsection 5.3 (p.45)]{miya0}.

\noindent Then we prove the following lemma needed in the proof of theorem below.

\begin{lem}\label{lem 8.8}
	For $ a \in C^* $, the action of $\phi(a)$ on ${\mathfrak{e}_8}^C$ is given by
	$$
	\phi(a)(\varPhi, P, Q, r, s, t) =
	(\psi(a)\varPhi\psi(a)^{-1}, a\psi(a)P, a^{-1}\psi(a)Q, r,
	a^2s, a^{-2}t),
	$$
	where the action to $ \mathfrak{P}^C $ of $\psi(a) \in {E_7}^C$ on right hand side is defined by
	\begin{align*}
	\psi(a)(X, Y, \xi, \eta) =
	(\begin{pmatrix}a\xi_1 & x_3 & \overline{x}_2 \\
	\overline{x}_3 & a^{-1}\xi_2 & a^{-1}x_1 \\
	x_2 & a^{-1}\overline{x}_1 & a^{-1}\xi_3
	\end{pmatrix},
	\begin{pmatrix}a^{-1}\eta_1 & y_3 & \overline{y}_2 \\
	\overline{y}_3 & a\eta_2 & ay_1 \\
	y_2 & a\overline{y}_1 & a\eta_3
	\end{pmatrix}, a\xi, a^{-1}\eta ).
	\end{align*}
    Moreover, we have the following formula
    \begin{align*}
    (\tau\lambda_\omega)\phi(a)(\lambda_\omega\tau)=\phi((\tau a)^{-1}).
    \end{align*}
\end{lem}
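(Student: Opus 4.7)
The plan has two components corresponding to the two claims in the lemma. For the first formula giving the explicit action of $\phi(a)$ on ${\mathfrak{e}_8}^C$, the approach is to unpack the definition of $\phi$ from \cite[Subsection 5.3]{miya0}, where $\phi(a)$ is realized as the exponential of an explicit element of ${\mathfrak{e}_8}^C$ acting by adjoint. Applying this exponential term-by-term to the six-tuple $(\varPhi, P, Q, r, s, t)$, the scalar factors $1, a, a^{-1}, 1, a^2, a^{-2}$ fall out of the eigenvalue decomposition of ${\mathfrak{e}_8}^C$ under $\ad\kappa$, while $\psi(a)$ emerges as the induced transformation on $\mathfrak{P}^C$ from the $E_7^C$-factor. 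This part is essentially a restatement of what is established in the reference.

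For the conjugation formula $(\tau\lambda_\omega)\phi(a)(\lambda_\omega\tau) = \phi((\tau a)^{-1})$, the first step is to establish the auxiliary identity
\begin{align*}
(\tau\lambda)\psi(a)(\lambda^{-1}\tau) = \psi(\tau a^{-1})
\end{align*}
on $\mathfrak{P}^C$ by direct computation: applying $\lambda^{-1}\tau$ to $(X,Y,\xi,\eta)$ swaps the $X$- and $Y$-components with appropriate signs and conjugates the entries, then $\psi(a)$ rescales according to its defining matrix pattern, and finally $\tau\lambda$ restores the layout while yielding precisely the $\psi(\tau a^{-1})$ rescaling. With this identity in hand, apply the first part of the lemma to compute the left-hand side $(\tau\lambda_\omega)\phi(a)(\lambda_\omega\tau)(\varPhi, P, Q, r, s, t)$ slot by slot, using the explicit formula for $\tau\lambda_\omega$ recalled in the proof of Lemma~\ref{lem 8.3}, and compare with $\phi((\tau a)^{-1})(\varPhi, P, Q, r, s, t)$ read from the first part. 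The $r$-slot is fixed, while the swap of the $s$- and $t$-slots under $\tau\lambda_\omega$ together with the exchange $a^2 \leftrightarrow a^{-2}$ forced by the substitution $a \mapsto (\tau a)^{-1}$ matches the required scalar slots.

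The main obstacle is the careful tracking of the anti-$C$-linearity of $\tau$ when commuted past the scalar factors $a, a^{\pm 2}$, together with the interplay between the swap of $P$ and $Q$ (with sign) induced by $\tau\lambda_\omega$ and the scalar factors $a$ and $a^{-1}$ in the first formula. Once this bookkeeping is organized, the identity drops out by direct comparison of the two sides without requiring any deeper structural input.
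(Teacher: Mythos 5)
Your proposal is correct and follows essentially the same route as the paper: the first formula is simply the content of the cited result in \cite{miya0} (the paper quotes \cite[Lemma 5.9]{miya0} rather than re-deriving it), and for the conjugation formula the paper likewise first verifies $(\tau\lambda)\psi(a)(\lambda^{-1}\tau)=\psi((\tau a)^{-1})$ by direct computation on $\mathfrak{P}^C$ and then combines it with the explicit form of $\tau\lambda_\omega$ and the first formula in a slot-by-slot check, exactly as you describe. Your bookkeeping of the $P\leftrightarrow Q$, $s\leftrightarrow t$ swaps and the antilinearity of $\tau$ matches the paper's computation, so no gap remains.
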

\begin{proof}
	  As for the first half, its proof is proved in \cite[Lemma 5.9]{miya0}. As for the second half, we first have $ (\tau\lambda)\psi(a)(\lambda^{-1}\tau)= \psi((\tau a)^{-1}) $. Indeed, it follows that
    \begin{align*}
    &\quad (\tau\lambda)\psi(a)(\lambda^{-1}\tau)(X,Y,\xi,\eta)
    \\
    &=(\tau
    \lambda)\psi(a)(-\tau
    Y, \tau X, -\tau \eta, \tau \xi)
    \\
    &=(\tau\lambda) (\begin{pmatrix}
    -a\tau \eta_1 & -\tau y_3 & -\tau \overline{y}_2 \\
    -\tau \overline{y}_3 & -a^{-1}\tau \eta_2 & -a^{-1}\tau y_1
    \\
    -\tau y_2 &  -a^{-1}\tau \overline{y}_1  & -a^{-1}\tau
    \eta_3
    \end{pmatrix},
    \begin{pmatrix}
    a^{-1}\tau \xi_1 & \tau x_3 & \tau \overline{x}_2 \\
    \tau \overline{x}_3 & a \tau \xi_2 & a \tau x_1 \\
    \tau x_2 &  a \tau \overline{x}_1  & a \tau \xi_3
    \end{pmatrix},
    -a \tau\eta, a^{-1}\tau\xi)
    \\
    &=(\begin{pmatrix}
    (\tau a^{-1}) \xi_1 &  x_3 &  \overline{x}_2 \\
    \overline{x}_3 & (\tau a) \xi_2 & (\tau a) x_1 \\
    x_2 &  (\tau a) \overline{x}_1  & (\tau a) \xi_3
    \end{pmatrix},
    \begin{pmatrix}
    (\tau a) \eta_1 &  y_3 &  \overline{y}_2 \\
    \overline{y}_3 & (\tau a^{-1}) \eta_2 & (\tau a^{-1}) y_1 \\
    y_2 &  (\tau a^{-1}) \overline{y}_1  & (\tau a^{-1}) \eta_3
    \end{pmatrix},
    (\tau a^{-1})\xi, (\tau a) \eta )
    \\
    &=\psi((\tau a)^{-1}).
   \end{align*}
   Hence, using $ \tau
   \lambda_\omega(\varPhi, P, Q, r, s, t)=(\tau\lambda \varPhi
   \lambda^{-1}\tau, \allowbreak  \tau\lambda Q, -\tau\lambda P,
   -\tau r, -\tau t, -\tau s) $ and the formula $ (\tau\lambda)\psi(a)(\lambda^{-1}\tau)= \psi((\tau a)^{-1}) $ shown above, we can obtain the
   formula $ (\tau\lambda_\omega)\phi(a)(\lambda_\omega\tau)=\phi((\tau a)^{-1}) $ by doing straightforward computation.
\end{proof}

Now, we determine the structure of the group $
(E_8)^{\kappa_4}$.
\begin{thm}\label{thm 8.9}
    The group $(E_8)^{\kappa^{}_4}$ is isomorphic to the group $(U(1)
    \times Spin(14))/\Z_4, \Z_4\allowbreak =\{(1,1),\allowbreak (-1,
    \phi(-1)), (i, \phi(-i)), (-i, \phi(i))  \} ${\rm :} $
    (E_8)^{\kappa^{}_4} \cong (U(1) \times
    Spin(14))/\Z_4$.
\end{thm}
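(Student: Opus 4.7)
The plan is to imitate the strategy of Theorem \ref{thm 7.5}, with $\upsilon$ replaced by $\kappa^{}_4$ and Theorem \ref{thm 7.4} replaced by Theorem \ref{thm 8.2}. First I would define
\[
\varphi_{{}_{\kappa^{}_4}} : U(1) \times Spin(14) \to (E_8)^{\kappa^{}_4}, \qquad \varphi_{{}_{\kappa^{}_4}}(\theta,\beta) = \phi(\theta)\beta,
\]
where $U(1) = \{\theta \in C \mid (\tau\theta)\theta = 1\}$ and $Spin(14)$ is identified with $(Spin(14,C))^{\tau\lambda_\omega}$ through Theorem \ref{thm 8.8}. Well-definedness follows because $\phi(\theta) \in E_8$ for $\theta \in U(1)$ by Lemma \ref{lem 8.8} (since $(\tau\lambda_\omega)\phi(\theta)(\lambda_\omega\tau) = \phi((\tau\theta)^{-1}) = \phi(\theta)$), $Spin(14) \subset E_8$ by Theorem \ref{thm 8.8}, and $\varphi_{{}_{\kappa^{}_4}}$ is the restriction of the isomorphism $\varphi$ of Theorem \ref{thm 8.2}, whose image lies in $({E_8}^C)^{\kappa^{}_4}$. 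The homomorphism property is inherited directly from $\varphi$.

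For surjectivity, I would take $\alpha \in (E_8)^{\kappa^{}_4} \subset ({E_8}^C)^{\kappa^{}_4}$ and write $\alpha = \phi(a)\beta$ with $a \in C^*$, $\beta \in Spin(14,C)$ via Theorem \ref{thm 8.2}. Rewriting the compactness condition $(\tau\lambda_\omega)\alpha(\lambda_\omega\tau) = \alpha$ using Lemma \ref{lem 8.8} and Proposition \ref{prop 8.5} would give
\[
\varphi\bigl((\tau a)^{-1},\,(\tau\lambda_\omega)\beta(\lambda_\omega\tau)\bigr) = \varphi(a,\beta),
\]
so that $\bigl((\tau a)^{-1}a^{-1},\,(\tau\lambda_\omega)\beta(\lambda_\omega\tau)\beta^{-1}\bigr) \in \Ker\,\varphi$. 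Because $\Ker\,\varphi = \{(1,1),(-1,\phi(-1)),(i,\phi(-i)),(-i,\phi(i))\}$, this forces $(\tau a)a$ to take one of the values $1,-1,i,-i$; but $\tau$ is the complex conjugation on $C$, so $(\tau a)a$ is a positive real, leaving only $(\tau a)a = 1$. Hence $a \in U(1)$ and $\beta \in (Spin(14,C))^{\tau\lambda_\omega} = Spin(14)$, giving $\alpha = \varphi_{{}_{\kappa^{}_4}}(a,\beta)$.

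For the kernel, I would simply compute $\Ker\,\varphi_{{}_{\kappa^{}_4}} = (U(1) \times Spin(14)) \cap \Ker\,\varphi$. Each of the four candidate pairs has first coordinate in $U(1)$, and $\phi(\pm 1),\phi(\pm i) \in Spin(14)$ follows from a direct application of Lemma \ref{lem 8.8} (for instance $(\tau\lambda_\omega)\phi(-i)(\lambda_\omega\tau) = \phi((\tau(-i))^{-1}) = \phi(-i)$). This produces $\Ker\,\varphi_{{}_{\kappa^{}_4}} = \Z_4$ as stated, and the homomorphism theorem yields the desired isomorphism. The main obstacle I anticipate is the four-case analysis in the surjectivity step: everything turns on the positivity of $(\tau a)a$ for $a \in C^*$, which simultaneously eliminates the three non-trivial branches in a single stroke; without this feature one would instead have to absorb each branch by multiplying $(a,\beta)$ through by the appropriate kernel element to normalise $\alpha$ into the form $\phi(a)\beta$ with $(a,\beta) \in U(1) \times Spin(14)$, analogous to the two-case reduction in the surjectivity step of Theorem \ref{thm 7.5}.
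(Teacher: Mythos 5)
Your proposal is correct and follows essentially the same route as the paper's proof: the same restriction $\varphi_{\kappa^{}_4}(a,\beta)=\phi(a)\beta$ of the complex isomorphism of Theorem \ref{thm 8.2}, surjectivity via the condition $(\tau\lambda_\omega)\alpha(\lambda_\omega\tau)=\alpha$, Lemma \ref{lem 8.8}, and the positivity of $(\tau a)a$ eliminating the nontrivial kernel branches, and the kernel identified with $\Z_4$. Your explicit check that the four kernel elements actually lie in $U(1)\times Spin(14)$ (via $(\tau\lambda_\omega)\phi(\pm i)(\lambda_\omega\tau)=\phi(\pm i)$, etc.) is a slightly more careful rendering of a step the paper passes over by asserting $\Ker\,\varphi_{\kappa^{}_4}=\Ker\,\varphi$, but the argument is the same.
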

\begin{proof}
    Let $ U(1):=\{a \in C \,|\, (\tau a)a=1\} (\subset C^*) $ and $ Spin(14) (\subset Spin(14,C)) $
    as the group $ (Spin(14,C))^{\tau\lambda_\omega} $
 (Proposition \ref{thm 8.8}). Then we can define a mapping $
 \varphi_{\kappa^{}_4}: U(1) \times Spin(14) \to
 (E_8)^{\kappa^{}_4} $ by the restriction of the mapping $ \varphi:C^* \times Spin(14,C) \to ({E_8}^C)^{\kappa^{}_4} $:
  $ \varphi_{\kappa^{}_4}(a,\beta)=\varphi(a,\beta)=\phi(a)\beta $ (Theorem \ref{thm 8.2}).

 First, we will prove that $ \varphi_{\kappa^{}_4} $ is
 well-defined.
 It is clear $ \phi(a) \in ({E_8}^C)^{\kappa_4} $, so that
 we have $ \phi(a) \in (E_8)^{\kappa_4} $. Indeed, since $ a $ satisfies the condition $ (\tau a)a=1 $, we have $ ({\tau\lambda_\omega})\phi(a)(\lambda_\omega\tau)=
 \phi(a) $ by Lemma \ref{lem 8.8}, that is, $ \phi(a) \in (({E_8}^C)^{\kappa_4})^{\tau\lambda_\omega}$, moreover
 it follows from $ (\tau\lambda_\omega)\kappa^{}_4=\kappa^{}_4(\tau\lambda_\omega) $ (Lemma \ref{lem 8.3}) and $ ({E_8}
 ^C)^{\tau\lambda_\omega}=E_8 $ (\cite[Preliminaries (p.96)]{miya2}) that
 \begin{align*}
 \phi(a) \in (({E_8}^C)^{\kappa^{}_4})^{\tau\lambda_\omega}=(({E_8}
 ^C)^{\tau\lambda_\omega})^{\kappa^{}_4}=(E_8)^{\kappa_4}.
 \end{align*}
 By an argument similar to above, it follows from Lemma \ref{lem 8.3} that
\begin{align*}
\beta \in Spin(14)=(Spin(14,C))^{\tau\lambda_\omega} \subset (({E_8}^C)^{\kappa^{}_4})^{\tau\lambda_\omega}=(({E_8}^C)^{\tau\lambda_\omega})^{\kappa^{}_4}=(E_8)^{\kappa^{}_4},
\end{align*}
that is, $ \beta \in (E_8)^{\kappa^{}_4} $. Hence $ \varphi_{\kappa^{}_4} $ is well-defined. Subsequently, we will prove that $ \varphi_{\kappa^{}_4} $ is a homomorphism, however since $
 \varphi_{\kappa^{}_4} $ is the restriction of the mapping
 $ \varphi $, it is clear.

 Next, we will prove that $ \varphi_{\kappa^{}_4} $ is
 surjective.
 Let $\alpha \in (E_8)^{\kappa^{}_4} \subset
 ({E_8}^C)^{\kappa^{}_4}$, there exist  $a \in C^*$ and $\beta \in Spin(14,C)$ such that $ \alpha=\varphi(a, \beta)$ (Theorem \ref{thm 8.2}).
 Moreover, from the condition $(\tau\lambda_\omega)\alpha(\lambda_\omega \tau)=\alpha $, that is, $(\tau\lambda_\omega)\varphi(a,\beta)(\lambda_\omega \tau)=\varphi(a,\beta)$, we have $ \varphi((\tau a)^{-1},
 (\tau\lambda_\omega)\beta(\lambda_\omega
 \tau))=\varphi(a, \beta) $.
 Indeed, it follows from Lemma \ref{lem 8.8} that
  \begin{align*}
  (\tau\lambda_\omega)
  \varphi(a,\beta)(\lambda_\omega \tau)
  &=(\tau\lambda_\omega)
  \phi(a)\beta(\lambda_\omega \tau)
  \\
  &=(\tau\lambda_\omega)
  \phi(a)(\lambda_\omega \tau)(\tau\lambda_\omega)
  \beta(\lambda_\omega \tau)
  \\
  &=\phi((\tau a)^{-1})(\tau\lambda_\omega)
  \beta(\lambda_\omega \tau)
  \\
  &=\varphi((\tau a)^{-1},
 (\tau\lambda_\omega)
  \beta(\lambda_\omega \tau))
  \end{align*}
 that $ \varphi((\tau a)^{-1},
 (\tau\lambda_\omega)\beta(\lambda_\omega
 \tau))=\varphi(a, \beta) $.

\if0
  first as for
  $(\tau\lambda_\omega)\phi(a)(\lambda_\omega \tau)$,
  by using $(\tau\lambda)\psi(a)(\lambda^{-1}\tau)=\psi(\tau
  a^{-1})$ (in the proof of Lemma \ref{lem 8.8}),
  we do a simple computation as follows.
  \begin{align*}
  &\quad (\tau\lambda_\omega)\phi(a)(\lambda_\omega \tau)
  (\varPhi, P,
   Q, r, s, t)
  \\
  &=(\tau\lambda_\omega)\phi(a)((\lambda\tau)\varPhi(\tau
  \lambda^{-1}),
   \lambda\tau Q, -\lambda\tau P, -\tau r, -\tau t, -\tau s)
  \\
  &=(\tau\lambda_\omega)(\psi(a)(\lambda\tau)\varPhi(\tau
  \lambda^{-1})\psi(a)^{-1},
  a\psi(a)\lambda\tau Q, a^{-1}\psi(a)(-\lambda\tau P), -\tau
  r, a^2(-\tau t),
  \\
  &\hspace*{90mm} a^{-2}(-\tau s))
  \\
  &=((\tau\lambda)\psi(a)(\lambda\tau)\varPhi(\tau\lambda^{-1})
  \psi(a)^{-1}(\lambda^{-1}\tau),
  (\tau\lambda)(a^{-1}\psi(a)(-\lambda\tau P)),
  \\
  &\hspace*{60mm} (-\tau\lambda)(a\psi(a)\lambda\tau Q), r, \tau
  (a^{-2}\tau s), \tau(a^2 \tau t))
  \\
  &=((\tau\lambda)\psi(a)(\lambda^{-1}\tau)\varPhi((\tau\lambda)
  \psi(a)(\lambda^{-1}\tau))^{-1},
  (\tau\lambda)(a^{-1}\psi(a)(\lambda^{-1}\tau) P,
  \\
  &\hspace*{60mm} (\tau\lambda)(a\psi(a)(\lambda^{-1}\tau) Q, r,
  (\tau a^{-2}) s, (\tau a^2) t)
  \\
  &=(\psi(\tau a^{-1})\varPhi\psi(\tau a^{-1})^{-1},(\tau
  a^{-1})\psi(\tau a^{-1}) P, (\tau a^{-1})^{-1}\psi(\tau
  a^{-1}) Q,
  r, (\tau a^{-1})^2 s,
  \\
  &\hspace*{90mm}(\tau a^{-1})^{-2} t)
  \\
  &=\phi(\tau a^{-1})(\varPhi, P, Q, r, s, t),\,\, (\varPhi, P,
  Q, r, s, t) \in {\mathfrak{e}_8}^C.
  \end{align*}
  \fi

  Hence, since $ \Ker\varphi=\{(1,1), (-1, \phi(-1)), (i, \phi(-i)), (-i, \phi(i))\} $, we have the following
  \begin{align*}
  \begin{array}{l}
  \mbox{(i)} \;\; \left\{\begin{array}{l}
  (\tau a)^{-1} = a
  \vspace{1mm}\\
  (\tau{\lambda_\omega})\beta({\lambda_\omega}\tau) = \beta,
  \end{array} \right.
  \qquad \qquad\;
  \mbox{(ii)} \;\; \left\{\begin{array}{l}
  (\tau a)^{-1} = -a
  \vspace{1mm}\\
  (\tau{\lambda_\omega})\beta({\lambda_\omega}\tau) =
  \phi(-1)\beta,
  \end{array} \right.
  \vspace{2mm}\\
  \hspace*{-1mm}
  \mbox{(iii)} \;\, \left\{\begin{array}{l}
  (\tau a)^{-1} = ia
  \vspace{1mm}\\
  (\tau{\lambda_\omega})\beta({\lambda_\omega}\tau) =
  \phi(-i)\beta,
  \end{array} \right.
  \quad\,\,\,\,\,
  \mbox{(iv)} \;\; \left\{\begin{array}{l}
  (\tau a)^{-1} = -ia
  \vspace{1mm}\\
  (\tau{\lambda_\omega})\beta({\lambda_\omega}\tau) =
  \phi(i)\beta.
  \end{array} \right.
  \end{array}
  \end{align*}

  Case (i). From $(\tau a)^{-1}=a$, we have $a \in U(1)=\{ a \in
  C^* \,|\,(\tau a)a=1 \}$. From $
  (\tau{\lambda_\omega})\beta({\lambda_\omega}\tau) = \beta$,
  we have $\beta \in Spin(14)$ (Theorem  \ref{thm 8.8}). Hence
  there exist $ a \in U(1) $ and $ \beta \in Spin(14) $ such
  that $ \alpha=\varphi(a,\beta)=\varphi_{\kappa^{}_4}(a,
  \beta)$.

  Case (ii). From $(\tau a)^{-1}=-a$, we have $(\tau a)a=-1$.
  However, this case is impossible because of $(\tau a)a >0$.

  Case (iii). From $(\tau a)^{-1}=ia$, we have $(\tau a)a=i$. As
  in Case (ii), this case is also impossible.

  Case (iv). From $(\tau a)^{-1}=-ia$, we have $(\tau a)a=-i$. As
  in Case (ii), this case is also impossible.

  \noindent With above, the proof of surjective is completed.

  Finally, we will determine $ \Ker\,\varphi_{\kappa^{}_4}$. Since $
  \varphi_{\kappa^{}_4} $ is the restriction of the
  mapping $ \varphi $, we have $ \Ker\,\varphi_{\kappa^{}_4}=\Ker\,\varphi $, that is, $ \Ker\,\varphi_{\kappa^{}_4}\!=\!\{(1,1), (-1,\phi(-1)), (i, \phi(-i)),\allowbreak (-i, \phi(i))  \} \cong \Z_4 $.

 Therefore we have the required isomorphism
 \begin{align*}
 (E_8)^{\kappa_4} \cong (U(1) \times Spin(14))/\Z_4.
 \end{align*}
\end{proof}


\section{Case 7. The automorphism $\tilde{\varepsilon}^{}_4$ of order four and the group $(E_8)^{{}_{\varepsilon^{}_4}}$}

We define a $ C $-linear transformations $ \varepsilon^{}_4$ of ${\mathfrak{e}_8}^C$ by
\begin{align*}
\varepsilon^{}_4(\varPhi, P, Q, r, s, t) &=
(\nu^{}_4\varPhi{\nu^{}_4}^{-1},
-\nu_4 P, -\nu_4 Q, r, s, t),
\end{align*}
where $\nu^{}_4 \in E_7$ on the right hand side is the same one as that
defined in previous section. Note that $ \varepsilon^{}_4 $ is the composition mapping of $ \nu^{}_4, \upsilon  \in E_7 \subset E_8 $, moreover since $ \nu^{}_4, \upsilon $ are expressed as elements of $ E_8 $ by
\begin{align*}
   \nu^{}_4&=\exp\left( \frac{2\pi i}{4}\ad(\varPhi(-2E_1 \vee E_1,0,0,-1),0,0,0,0,0)\right) ,
   \\
   \upsilon&=\exp\left( \frac{2\pi i}{4}\ad(\varPhi(0,0,0,6),0,0,0,0,0)\right) ,
\end{align*}
respectively and together with $ [\varPhi(-2E_1 \vee E_1,0,0,-1),\varPhi(0,0,0,6) ]=0 $, we have
\begin{align*}
    \varepsilon^{}_4=\exp\left( \frac{2\pi i}{4}\ad(\varPhi(-2E_1 \vee E_1,0,0,5),0,0,0,0,0)\right) .
\end{align*}
Hence it follows from above that $\varepsilon^{}_4 \in E_8 $ and
$(\varepsilon^{}_4)^4=1$, so that
$\varepsilon^{}_4$ induces the
inner automorphism $ \tilde{\varepsilon}^{}_4 $ of order four on $E_8$:
$\tilde{\varepsilon}^{}_4(\alpha)=\varepsilon^{}_4\alpha{\varepsilon^{}_4}^{-1},\alpha \in E_8$.
\vspace{1mm}

Now, we will study the subgroup $(E_8)^{\varepsilon^{}_4}$ of $ E_8 $:
$$
(E_8)^{\varepsilon^{}_4}=\left\lbrace \alpha \in E_8 \relmiddle{|}
\varepsilon^{}_4\alpha=\alpha\varepsilon^{}_4 \right\rbrace .
$$

The aim of this section is to determine the structure of the group $(E_8)^{\varepsilon^{}_4}$.
Before that, we make some preparations. First, in order to prove the proposition below, we use the following proposition and theorem.

\begin{prop}\label{prop 9.1}
   The group $ (E_7)^{\nu^{}_4} $ contains a group
   \begin{align*}
       \phi_{{}_{\scalebox{0.6}{$ U(1) $}}}(U(1))=\left\lbrace \phi_{{}_{\scalebox{0.6}{$ U(1) $}}}(\theta)\relmiddle{|} \theta \in U(1) \right\rbrace
   \end{align*}
which is isomorphic to the group $ U(1)=\left\lbrace \theta \in C \relmiddle{|} (\tau \theta)\theta=1 \right\rbrace  $, where $ \phi_{{}_{\scalebox{0.6}{$ U(1) $}}} $ is the restriction of the mapping $ \varphi_{{}_2} $ defined in \cite[Theorem 4.11.13]{iy0}. For $ \theta \in U(1) $, the mapping $ \phi_{{}_{\scalebox{0.6}{$ U(1) $}}}(\theta):\mathfrak{P}^C \to \mathfrak{P}^C $ is given by
\begin{align*}
    &\quad \phi_{{}_{\scalebox{0.6}{$ U(1) $}}}(\theta)(X,Y,\xi,\eta)
    \\
    &=\varphi^{}_2(\begin{pmatrix}
        \theta & 0 \\
        0  & \tau\theta
    \end{pmatrix})(
    \begin{pmatrix}
        \xi_1 & x_3 & \overline{x}_2 \\
        \overline{x}_3 & \xi_2 & x_1 \\
        x_2 & \overline{x}_1 & \xi_3
    \end{pmatrix},
    \begin{pmatrix}
        \eta_1 & y_3 & \overline{y}_2 \\
        \overline{y}_3 & \eta_2 & y_1 \\
        y_2 & \overline{y}_1 & \eta
    \end{pmatrix}, \xi, \eta ) \\
    & = ( \begin{pmatrix}
        \theta\xi_1 & x_3 & \overline{x}_2 \\
        \overline{x}_3 & (\tau\theta)\xi_2 & (\tau\theta)x_1 \\
        x_2 & (\tau\theta)\overline{x}_1 & (\tau\theta)\xi_3
    \end{pmatrix},
    \begin{pmatrix}
        (\tau\theta)\eta_1 & y_3 & \overline{y}_2 \\
        \overline{y}_3 & \theta\eta_2 & \theta y_1 \\
        y_2 & \theta\overline{y}_1 & \theta\eta_3
    \end{pmatrix}, \theta\xi, (\tau\theta)\eta ).
\end{align*}
\end{prop}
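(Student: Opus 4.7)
The plan is to prove two claims separately: first, that $\phi_{{}_{\scalebox{0.6}{$U(1)$}}}(\theta)\in (E_7)^{\nu^{}_4}$ for every $\theta\in U(1)$, and second, that the induced map $U(1)\to \phi_{{}_{\scalebox{0.6}{$U(1)$}}}(U(1))$ is a group isomorphism. Together these yield the proposition.

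That $\phi_{{}_{\scalebox{0.6}{$U(1)$}}}(\theta)\in E_7$ is automatic because $\phi_{{}_{\scalebox{0.6}{$U(1)$}}}$ is defined as the restriction of $\varphi^{}_2:SU(2)\to E_7$ of \cite[Theorem 4.11.13]{iy0} to the diagonal torus $\{\mathrm{diag}(\theta,\tau\theta)\mid\theta\in U(1)\}\subset SU(2)$. To verify the commutativity $\nu^{}_4\phi_{{}_{\scalebox{0.6}{$U(1)$}}}(\theta)=\phi_{{}_{\scalebox{0.6}{$U(1)$}}}(\theta)\nu^{}_4$ as $C$-linear transformations of $\mathfrak{P}^C$, I would apply both compositions to a generic element $(X,Y,\xi,\eta)\in\mathfrak{P}^C$ and compare entry by entry. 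The essential observation is that with respect to the Peirce-type decomposition of $\mathfrak{P}^C$ into its $E_i$- and $F_1$-components, both $\nu^{}_4$ and $\phi_{{}_{\scalebox{0.6}{$U(1)$}}}(\theta)$ act on each entry as multiplication by an element of $C$ (by $\pm i$ for $\nu^{}_4$ and by $\theta$ or $\tau\theta$ for $\phi_{{}_{\scalebox{0.6}{$U(1)$}}}(\theta)$), so the two operations commute componentwise. Therefore they commute on all of $\mathfrak{P}^C$, giving $\phi_{{}_{\scalebox{0.6}{$U(1)$}}}(U(1))\subset(E_7)^{\nu^{}_4}$.

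As for the isomorphism, $\phi_{{}_{\scalebox{0.6}{$U(1)$}}}$ is a group homomorphism, being the composition of the diagonal embedding $U(1)\hookrightarrow SU(2)$ with $\varphi^{}_2$. To establish injectivity, suppose $\phi_{{}_{\scalebox{0.6}{$U(1)$}}}(\theta)=1$; applying this to $(E_1,0,0,0)\in\mathfrak{P}^C$ and reading off the $\xi_1$-entry forces $\theta=1$, so $\Ker\,\phi_{{}_{\scalebox{0.6}{$U(1)$}}}=\{1\}$ and hence $\phi_{{}_{\scalebox{0.6}{$U(1)$}}}(U(1))\cong U(1)$ as required.

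The only step requiring any real care is the commutation check on $\mathfrak{P}^C$, which is tedious but essentially routine: since every factor appearing in either formula is a $C$-scalar and no nontrivial identities in $\mathfrak{C}^C$ or $\mathfrak{J}^C$ are invoked, the verification reduces to commutativity of multiplication in $C$. The bookkeeping of pairing each $\pm i$ factor of $\nu^{}_4$ against the corresponding $\theta$ or $\tau\theta$ factor of $\phi_{{}_{\scalebox{0.6}{$U(1)$}}}(\theta)$ on the nine entries of $X$ and of $Y$ together with the scalars $\xi,\eta$ constitutes the bulk of the work, but no substantive obstacle is expected.
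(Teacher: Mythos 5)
Your proposal is correct, and its overall structure (membership in $E_7$ via the restriction of $\varphi^{}_2$, commutation with $\nu^{}_4$, and the isomorphism onto $U(1)$) matches the paper's. The one genuine difference is in the commutation step: the paper's proof rests on the single observation that $\nu^{}_4=\phi_{{}_{U(1)}}(-i)$, i.e.\ $\nu^{}_4$ itself lies in the image of the abelian group $U(1)$ under the homomorphism $\phi_{{}_{U(1)}}$, so $\nu^{}_4\phi_{{}_{U(1)}}(\theta)=\phi_{{}_{U(1)}}(\theta)\nu^{}_4$ is immediate with no computation; you instead check commutativity entry by entry, using that both transformations act diagonally on the slots $\xi_i,x_i,\eta_i,y_i,\xi,\eta$ of $\mathfrak{P}^C$ by $C$-scalars, which is valid and needs no octonion identities. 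Your route costs a routine componentwise bookkeeping that the paper avoids, but it does not require recognizing the identity $\nu^{}_4=\phi_{{}_{U(1)}}(-i)$ (which, incidentally, the paper exploits again in the proof of Proposition \ref{prop 9.3}, so it is worth noting). You also verify injectivity of $\phi_{{}_{U(1)}}$ explicitly by evaluating at $(E_1,0,0,0)$, a point the paper leaves implicit; that addition is harmless and correct.
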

\begin{proof}
From the definition of $ \phi_{{}_{\scalebox{0.6}{$ U(1) $}}} $, it is clear $ \phi_{{}_{\scalebox{0.6}{$ U(1) $}}} \in E_7 $. Moreover, since $ \nu^{}_4 $ is expressed by $ \phi_{{}_{\scalebox{0.6}{$ U(1) $}}}(-i) $: $ \nu^{}_4=\phi_{{}_{\scalebox{0.6}{$ U(1) $}}}(-i) $, it is also clear $ \nu^{}_4\phi_{{}_{\scalebox{0.6}{$ U(1) $}}}(\theta)=\phi_{{}_{\scalebox{0.6}{$ U(1) $}}}(\theta)\nu^{}_4 $. Hence we have $ \phi_{{}_{\scalebox{0.6}{$ U(1) $}}}(\theta) \in (E_7)^{\nu^{}_4} $.
\end{proof}

\begin{thm}{\rm (\cite[Theorem 4.11.15]{iy0})}\label{thm 9.2}
    The group $ (E_7)^\sigma $ is isomorphic to the group $ (SU(2)\times Spin(12))/\Z_2,\Z_2=\{(E,1),(-E,-\sigma)\} ${\rm :} $ (E_7)^\sigma \cong (SU(2)\times Spin(12))/\Z_2 $.
\end{thm}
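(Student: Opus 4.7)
The plan is to follow exactly the template already in use for Theorems \ref{thm 7.4} and \ref{thm 7.5} (and, earlier in the series, for the analogous realizations): introduce a natural multiplication map
\[
\varphi : SU(2) \times Spin(12) \longrightarrow (E_7)^\sigma, \qquad \varphi(A,\beta) = \phi_\sigma(A)\,\beta,
\]
where $\phi_\sigma : SU(2) \to E_7$ is a suitable embedding whose image lies in $(E_7)^\sigma$ and commutes with the standard copy of $Spin(12) \subset E_7$ (which, as in \cite[Proposition 4.6.10]{iy0}-type characterizations, is realized as $\{\alpha \in E_7 \mid \sigma\alpha=\alpha\sigma,\ \mu\alpha=\alpha\mu\}$). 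The first task is to exhibit $\phi_\sigma$ explicitly, by exponentiating an $\mathfrak{su}(2)$-subalgebra of the centralizer of $\mathfrak{so}(12)$ inside $(\mathfrak{e}_7)^\sigma$; this parallels the construction of $\phi_\upsilon$ in Proposition \ref{prop 7.3}.

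Once $\phi_\sigma$ is in place, well-definedness is immediate: $\phi_\sigma(A) \in (E_7)^\sigma$ by construction, and $\beta \in (E_7)^\sigma$ by the definition of $Spin(12)$. The homomorphism property of $\varphi$ reduces to the statement that $\phi_\sigma(SU(2))$ and $Spin(12)$ commute elementwise; this in turn is verified at the Lie algebra level, since the $\mathfrak{su}(2)$-factor we chose is by construction orthogonal to $\mathfrak{so}(12)$ in the bracket on $(\mathfrak{e}_7)^\sigma$.

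For surjectivity, the strategy is the usual dimension-and-connectedness argument: the Lie algebra decomposition
\[
(\mathfrak{e}_7)^\sigma \cong \mathfrak{su}(2) \oplus \mathfrak{so}(12)
\]
gives $\dim (\mathfrak{e}_7)^\sigma = 3 + 66 = 69$, matching $\dim(SU(2) \times Spin(12))$. Since $E_7$ is simply connected and compact and $\sigma$ is an involution, $(E_7)^\sigma$ is connected, so it suffices to see that the differential $d\varphi_{(E,1)}$ is surjective, which follows from the direct sum decomposition above. Hence $\varphi$ is a surjective Lie group homomorphism.

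Finally, the kernel is computed by solving $\phi_\sigma(A)\beta = 1$, equivalently $\beta = \phi_\sigma(A)^{-1}$, inside $E_7$. This forces $\phi_\sigma(A^{-1})$ to lie in $\phi_\sigma(SU(2)) \cap Spin(12)$, which (after one sets up $\phi_\sigma$ so that $\phi_\sigma(-E)$ coincides with $-\sigma \in z(E_7)$) consists precisely of $\{1,\,-\sigma\}$, corresponding to $A = E$ and $A = -E$ respectively. This gives $\Ker\,\varphi = \{(E,1),(-E,-\sigma)\} \cong \Z_2$, and the first isomorphism theorem yields the claimed $(E_7)^\sigma \cong (SU(2)\times Spin(12))/\Z_2$. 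The main obstacle I expect is the very first step: producing the explicit embedding $\phi_\sigma$ and verifying, by direct calculation on $\mathfrak{P}^C$, both that $\phi_\sigma(SU(2))$ centralizes the prescribed $Spin(12)$ and that $\phi_\sigma(-E) = -\sigma$; everything downstream is a fairly mechanical application of the homomorphism theorem, as in the proof of Theorem \ref{thm 7.5}.
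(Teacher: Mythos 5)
Your proposal is correct and is essentially the paper's (i.e.\ Yokota's) argument: the paper simply defines $\varphi(A,\beta)=\varphi_{2}(A)\beta$, where $\varphi_{2}$ is the explicit $SU(2)$-embedding of \cite[Theorem 4.11.13]{iy0} — precisely the $\phi_\sigma$ you defer constructing — and cites \cite[Theorem 4.11.15]{iy0} for the verification. The steps you outline (elementwise commutation with $Spin(12)$, surjectivity via connectedness of $(E_7)^\sigma$ together with the decomposition $(\mathfrak{e}_7)^\sigma\cong\mathfrak{su}(2)\oplus\mathfrak{so}(12)$, and kernel $\{(E,1),(-E,-\sigma)\}\cong\Z_2$) are exactly the details carried out there.
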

\begin{proof}
    We define a mapping $ \varphi:SU(2) \times Spin(12) \to (E_7)^\sigma $ by
    \begin{align*}
        \varphi(A,\beta)=\varphi_{{}_2}(A)\beta,
    \end{align*}
    where $ \varphi_{{}_2} $ is defined in \cite[Theorem 4.11.13]{iy0}.

    Then the mapping $ \varphi $ induces the required isomorphism. As for the $ \R $-linear transformation $ \sigma $ of $ \mathfrak{P}^C $, see \cite[Subsection 4.11(p.133)]{iy0} in detail.
\end{proof}

\begin{prop}\label{prop 9.3}
 The group $ (E_8)^{\varepsilon^{}_4} $ contains the group $ (E_7)^{\nu^{}_4} $ which is isomorphic to the group $(U(1) \times Spin(12))/\Z_2, \Z_2=\{(1, 1), (-1, -\sigma)
 \}${\rm :} $(E_8)^{\varepsilon^{}_4} \supset (E_7)^{\nu^{}_4} \cong (U(1) \times Spin(12))/
 \Z_2$.
\end{prop}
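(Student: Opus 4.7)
The proof splits into two parts: the inclusion $(E_7)^{\nu^{}_4}\subset (E_8)^{\varepsilon^{}_4}$ and the group isomorphism $(E_7)^{\nu^{}_4}\cong (U(1)\times Spin(12))/\Z_2$. For the inclusion, my plan is to argue directly, in the style of Proposition \ref{prop 7.2}. Given $\alpha \in (E_7)^{\nu^{}_4}$, I would expand
\begin{align*}
\varepsilon^{}_4\alpha(\varPhi,P,Q,r,s,t)&=\varepsilon^{}_4(\alpha\varPhi\alpha^{-1},\alpha P,\alpha Q,r,s,t)\\
&=(\nu^{}_4\alpha\varPhi\alpha^{-1}{\nu^{}_4}^{-1},-\nu^{}_4\alpha P,-\nu^{}_4\alpha Q,r,s,t)
\end{align*}
and use $\nu^{}_4\alpha=\alpha\nu^{}_4$ together with the $C$-linearity of $\alpha$ (so that the scalar $-1$ passes through $\alpha$) to rearrange this expression as $\alpha\varepsilon^{}_4(\varPhi,P,Q,r,s,t)$, yielding $\varepsilon^{}_4\alpha=\alpha\varepsilon^{}_4$.

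For the isomorphism, my approach is to exploit Theorem \ref{thm 9.2} by defining
\begin{align*}
\varphi_{{}_{\nu^{}_4}}: U(1)\times Spin(12) \longrightarrow (E_7)^{\nu^{}_4},\qquad \varphi_{{}_{\nu^{}_4}}(\theta,\beta)=\phi_{{}_{U(1)}}(\theta)\beta,
\end{align*}
which is the restriction of $\varphi:SU(2)\times Spin(12)\to (E_7)^\sigma$ along the diagonal embedding $U(1)\hookrightarrow SU(2)$, $\theta\mapsto \diag(\theta,\tau\theta)$. Well-definedness relies on two commutations with $\nu^{}_4$: first, $\phi_{{}_{U(1)}}(\theta)\nu^{}_4=\nu^{}_4\phi_{{}_{U(1)}}(\theta)$, because $\nu^{}_4=\phi_{{}_{U(1)}}(-i)$ by Proposition \ref{prop 9.1} and $\phi_{{}_{U(1)}}$ factors through the abelian group $U(1)$; second, $\beta\nu^{}_4=\nu^{}_4\beta$, because $Spin(12)$ commutes with $\varphi_{{}_2}(SU(2))$ inside $E_7$, which is precisely the condition making $\varphi$ a well-defined homomorphism in Theorem \ref{thm 9.2}. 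The homomorphism property of $\varphi_{{}_{\nu^{}_4}}$ is then inherited from $\varphi$.

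For surjectivity, I would first observe that any $\alpha\in (E_7)^{\nu^{}_4}$ satisfies $(\nu^{}_4)^2\alpha=\alpha(\nu^{}_4)^2$. Since $(\nu^{}_4)^2=\phi_{{}_{U(1)}}(-1)=\varphi_{{}_2}(-E)$ equals $-\sigma$ (read off from $\Ker\varphi=\{(E,1),(-E,-\sigma)\}$) and $-1$ is central in $E_7$, this gives $\sigma\alpha=\alpha\sigma$, hence $\alpha\in (E_7)^\sigma$. Theorem \ref{thm 9.2} then supplies $(A,\beta)\in SU(2)\times Spin(12)$ with $\alpha=\varphi(A,\beta)$, and the relation $\nu^{}_4\alpha=\alpha\nu^{}_4$ translates, using the commutativity of $\nu^{}_4$ with $\beta$, into $\varphi(\diag(-i,i)A\diag(-i,i)^{-1},\beta)=\varphi(A,\beta)$. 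The kernel $\Ker\varphi$ forces either $\diag(-i,i)A\diag(-i,i)^{-1}=A$, in which case $A=\diag(\theta,\tau\theta)$ for some $\theta\in U(1)$ and $\alpha=\varphi_{{}_{\nu^{}_4}}(\theta,\beta)$, or $\diag(-i,i)A\diag(-i,i)^{-1}=-A$ together with $\beta=-\sigma\cdot\beta$, which is impossible since $-\sigma\neq 1$. Finally $\Ker\varphi_{{}_{\nu^{}_4}}=\Ker\varphi\cap(U(1)\times Spin(12))=\{(1,1),(-1,-\sigma)\}\cong \Z_2$, completing the isomorphism. The main obstacle I foresee is the careful bookkeeping in this surjectivity step: I must accurately convert the relation $\nu^{}_4\alpha{\nu^{}_4}^{-1}=\alpha$ into an identity modulo $\Ker\varphi$ and eliminate the ``$-E$''-branch, which rests on the commutativity of $Spin(12)$ with $\varphi_{{}_2}(SU(2))$ in $E_7$ -- a fact I would either cite from \cite{iy0} or extract from the construction underlying Theorem \ref{thm 9.2}.
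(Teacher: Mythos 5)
Your proposal is correct and follows essentially the same route as the paper: the same direct computation for the inclusion $(E_7)^{\nu_4}\subset(E_8)^{\varepsilon_4}$, and the same map $\varphi_{\nu_4}(\theta,\beta)=\phi_{U(1)}(\theta)\beta$ as a restriction of $\varphi$ from Theorem \ref{thm 9.2}, with surjectivity via $(\nu_4)^2=-\sigma$, the kernel dichotomy, and elimination of the $-E$ branch through $\beta=-\sigma\beta$. The only differences (commuting $\nu_4$ past $\beta$ before applying the kernel, and reading $\varphi_2(-E)=-\sigma$ off $\Ker\varphi$) are cosmetic.
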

\begin{proof}
 Let $\alpha \in(E_7)^{\nu^{}_4}$. Note that $-1 \in z(E_7)$
 (the center of $E_7$),  we have
 \begin{align*}
     \varepsilon^{}_4\alpha(\varPhi, P, Q, r, s, t)
     &=\varepsilon^{}_4(\alpha\varPhi\alpha^{-1}, \alpha
     P, \alpha Q, r, s, t)
     \\
     &=(\nu^{}_4 \alpha\varPhi\alpha^{-1}{\nu^{}_4}^{-1},
     -\nu^{}_4 \alpha P, -\nu^{}_4 \alpha Q, r, s, t)
     \\
     &=(\alpha(\nu^{}_4 \varPhi{\nu^{}_4}^{-1})\alpha^{-1},
     \alpha(-\nu^{}_4  P), \alpha(-\nu^{}_4  Q), r, s, t)
     \\
     &=\alpha\varepsilon^{}_4(\varPhi, P, Q, r, s, t),\;\;(\varPhi, P, Q, r, s, t) \in {\mathfrak{e}_8}^C,
 \end{align*}
 that is, $\varepsilon^{}_4\alpha=\alpha\varepsilon^{}_4 $.
 Hence we have $ \alpha \in (E_8)^{\varepsilon^{}_4} $, so the first half is proved.

 Next, we will move the proof of the second half.
 Let $U(1) = \{ \theta \in C \, | \, (\tau\theta)\theta = 1
 \} $ and $ Spin(12) $ be constructed in \cite[Theorem 4.11.11]{iy0}.
  We define a mapping $\varphi_{{}_{\nu^{}_4}}: U(1) \times
 Spin(12) \to (E_7)^{\nu^{}_4}$ by
 \begin{align*}
  \varphi_{{}_{\nu^{}_4}}(\theta, \beta)=
  \phi_{{}_{U(1)}}(\theta)\beta.
 \end{align*}
Note that $ \varphi_{{}_{\nu^{}_4}} $ is the restriction of the mapping $ \varphi:SU(2)\times Spin(12) \to (E_7)^\sigma$ defined in the proof of Theorem \ref{thm 9.2}.

First, we will prove that $ \varphi_{{}_{\nu^{}_4}} $ is well-defined.
From Proposition \ref{prop 9.1}, we have $ \phi_{{}_{\scalebox{0.6}{$ U(1) $}}}(\theta) \in (E_7)^{\nu^{}_4} $. Since
$\varphi_2 (A) $ and $\beta \in Spin(12)$ are commutative, we have $ \nu^{}_4\beta=\beta\nu^{}_4 $ because of $\nu^{}_4= \phi_{{}_{\scalebox{0.6}{$ U(1) $}}}(-i)=\varphi_{{}_2}(\diag(-i,i)) $, that is, $ \beta \in (E_7)^{\nu^{}_4} $. Hence $ \varphi_{{}_{\nu^{}_4}} $ is well-defined. Subsequently, we will prove that $  \varphi_{{}_{\nu^{}_4}} $ is a homomorphism. However, since the mapping $  \varphi_{{}_{\nu^{}_4}} $ is the restriction of the mapping $ \varphi $, it is clear.

Next, we will prove that
 $ \varphi_{{}_{\nu^{}_4}}$ is surjective. Let $ \alpha \in (E_7)^{\nu^{}_4} $. Then it follows from
 $(\nu^{}_4)^2=(\phi_{{}_{U(1)}}(-i))^2=\phi_{{}_{U(1)}}(-1)=-
 \sigma$ that $ \alpha \in (E_7)^{-\sigma}=(E_7)^\sigma $. Hence there exist $A \in SU(2)$
 and $\beta \in Spin(12)$ such that $\alpha =\varphi(A,
 \beta)$ (Theorem \ref{thm 9.2}). Moreover from the condition $\nu^{}_4 \alpha {\nu^{}_4}^{-1} =\alpha $, that is, $\nu^{}_4 \varphi(A, \beta){\nu^{}_4}^{-1}= \varphi(A,
 \beta)$, using $ \nu^{}_4=\phi_{{}_{U(1)}}(-i) $ we have $\varphi(\begin{pmatrix}a & -b\\
     -c & d
 \end{pmatrix} , \nu^{}_4 \beta
 {\nu^{}_4}^{-1})=\varphi(\begin{pmatrix}a & b\\
     c & d
 \end{pmatrix} , \beta )$ as $A:=\begin{pmatrix}a & b\\
     c & d
 \end{pmatrix}$.
\vspace{1mm}

Thus, since $ \Ker\varphi=\{(E,1),(-E,-1)\} $, we have the following
 $$
 \left\{ \begin{array}{l}\begin{pmatrix}a & -b\\
         -c & d
     \end{pmatrix}=\begin{pmatrix}a & b\\
         c & d
     \end{pmatrix}
     \vspace{2mm}\\
     \nu^{}_4 \beta {\nu^{}_4}^{-1}=\beta
 \end{array} \right. \quad
 \text{or} \quad
 \left\{ \begin{array}{l}\begin{pmatrix}a & -b\\
         -c & d
     \end{pmatrix}=\begin{pmatrix}-a & -b\\
         -c & -d
     \end{pmatrix}
     \vspace{2mm}\\
     \nu^{}_4 \beta {\nu^{}_4}^{-1}=-\sigma\beta.
 \end{array} \right.
 $$
 In the latter case, from $\nu^{}_4 \beta=\beta \nu^{}_4$, we have $\beta=-\sigma\beta$. Hence
 the latter case is impossible. Indeed, if there exists $ \beta \in Spin(12) $ such that $ \beta=-\sigma\beta $. Then apply $ \beta^{-1} $ on both side of $ \beta=-\sigma\beta $, we have $ 1=-\sigma $. This is contradiction.

 \noindent In the former case. From the first condition $\begin{pmatrix}a & -b\\
     -c & d
 \end{pmatrix}=\begin{pmatrix}a & b\\
     c & d
 \end{pmatrix} \in SU(2)$, we have $A=\begin{pmatrix}a & 0\\
     0& \tau a
 \end{pmatrix}, (\tau a)a=1$, that is, $a \in U(1)$, and it is
 trivial that $\beta \in Spin(12)$. Thus there exist $ \theta \in U(1) $ and $ \beta \in Spin(12) $ such that $ \alpha=\varphi(\diag(\theta,\tau\theta),\beta)=\varphi_{{}_{\nu^{}_4}}(\theta,\beta) $.
 With above, the proof of surjective is completed.

 Finally, we will determine $ \Ker\,\varphi_{{}_{\nu^{}_4}} $. Since $ \varphi_{{}_{\nu^{}_4}} $ is restriction of the mapping $ \varphi $, we have $ \Ker\,\varphi_{{}_{\nu^{}_4}}=\Ker\,\varphi $, that is, $ \Ker\,\varphi_{{}_{\nu^{}_4}}= \{(1, 1), (-1, -\sigma) \}
 \cong \Z_2$.

 Therefore we have the required isomorphism
\begin{align*}
   (E_7)^{\nu^{}_4}
   \cong (U(1) \times Spin(12))/\Z_2
\end{align*}
\end{proof}

\begin{lem}\label{lem 9.4}
	{\rm (1)} The Lie algebra $ (\mathfrak{e}_8)^\upsilon $
	of the group $(E_8)^\upsilon $ is given by
	\begin{align*}
	(\mathfrak{e}_8)^\upsilon&=\left\lbrace \ad(R) \in \Der(\mathfrak{e}_8) \relmiddle{|} \upsilon \ad(R)=\ad(R)\upsilon \right\rbrace
\\
&\cong \left\lbrace R \in \mathfrak{e}_8 \relmiddle{|} \upsilon R=R \right\rbrace
	\\
	&=\left\lbrace R=(\varPhi, 0,0,r, s, -\tau s) \relmiddle{|}\varPhi \in \mathfrak{e}_7,r \in i\R, s \in C \right\rbrace.
	\end{align*}

	In particular, we have $ \dim((\mathfrak{e}_8)^\upsilon)=133+1+2=136 $.
\vspace{1mm}

    {\rm (2)} The Lie algebra $ (\mathfrak{e}_8)^{\varepsilon^{}_4} $
    of the group $(E_8)^{\varepsilon^{}_4}$ is given by
    \begin{align*}
    (\mathfrak{e}_8)^{\varepsilon^{}_4}&=\left\lbrace \ad(R) \in \Der(\mathfrak{e}_8) \relmiddle{|} \varepsilon^{}_4 \ad(R)=\ad(R)
    \varepsilon^{}_4 \right\rbrace
\\
&\cong \left\lbrace R \in
    \mathfrak{e}_8 \relmiddle{|} \varepsilon^{}_4 R=R \right\rbrace  \\
    &=\left\lbrace R=(\varPhi, 0,0,r, s, -\tau s) \relmiddle{|} \varPhi \in
    (\mathfrak{e}_7)^{\nu^{}_4} \cong \mathfrak{u}(1) \oplus
    \mathfrak{so}(12),r \in i\R, s \in C \right\rbrace .
    \end{align*}

    In particular, we have $ \dim((\mathfrak{e}_8)^{\varepsilon^{}_4})=(1+66)+1+2=70$.
\end{lem}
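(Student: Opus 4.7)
The plan is to mimic the style of Lemma~\ref{lem 7.1}, solving the fixed-point equations $\upsilon R=R$ and $\varepsilon^{}_4 R=R$ on $\mathfrak{e}_8$ by direct inspection in coordinates. The identification of $(\mathfrak{e}_8)^{\alpha}$ with $\{R\in\mathfrak{e}_8\mid \alpha R=R\}$ is automatic in both cases since $\ad:\mathfrak{e}_8\to\Der(\mathfrak{e}_8)$ is an isomorphism, so the displayed equality $\omega^{}_4\ad(R)=\ad(R)\omega^{}_4$ of Lemma~\ref{lem 7.1} translates verbatim.

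For part (1), the key point is that $\upsilon$ coincides with $-1\in z(E_7)$. This can be extracted from the factorisation $\omega^{}_4=\iota\cdot\upsilon$ (with commuting $\iota,\upsilon\in E_7$) together with the explicit formula for $\omega^{}_4$: comparing $\omega^{}_4(\varPhi,P,Q,r,s,t)=(\iota\varPhi\iota^{-1},-\iota P,-\iota Q,r,s,t)$ with $(\iota\upsilon)(\varPhi,P,Q,r,s,t)=(\iota\upsilon\varPhi\upsilon^{-1}\iota^{-1},\iota\upsilon P,\iota\upsilon Q,r,s,t)$ forces $\upsilon$ to act as $-1$ on $\mathfrak{P}^C$ and trivially on $\mathfrak{e}_7^C$. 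Consequently $\upsilon$ sends $(\varPhi,P,Q,r,s,t)$ to $(\varPhi,-P,-Q,r,s,t)$, and the equation $\upsilon R=R$ collapses to $P=Q=0$. Intersecting with the real-form conditions $r\in i\R$ and $t=-\tau s$ yields the stated description, with dimension $133+1+2=136$.

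For part (2), the fixed-point condition $\varepsilon^{}_4 R=R$ unfolds, by the definition of $\varepsilon^{}_4$, into
\[
\nu^{}_4\varPhi\nu^{}_4{}^{-1}=\varPhi,\quad \nu^{}_4 P=-P,\quad \nu^{}_4 Q=-Q,
\]
with no constraint on $r,s,t$. The crucial observation is to read off, from the explicit formula for $\nu^{}_4$ on $\mathfrak{P}^C$ recorded at the beginning of Section~8, that every coordinate of $\nu^{}_4(X,Y,\xi,\eta)$ is obtained from the corresponding coordinate of $(X,Y,\xi,\eta)$ by multiplication with an element of $\{1,i,-i\}$. In particular $-1$ is not an eigenvalue of $\nu^{}_4$ on $\mathfrak{P}^C$, so $\nu^{}_4 P=-P$ and $\nu^{}_4 Q=-Q$ force $P=Q=0$, leaving $\varPhi\in(\mathfrak{e}_7)^{\nu^{}_4}$ together with the usual reality conditions.

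To close, the isomorphism $(\mathfrak{e}_7)^{\nu^{}_4}\cong\mathfrak{u}(1)\oplus\mathfrak{so}(12)$ is the infinitesimal version of $(E_7)^{\nu^{}_4}\cong(U(1)\times Spin(12))/\Z_2$ supplied by Proposition~\ref{prop 9.3}, giving $\dim(\mathfrak{e}_7)^{\nu^{}_4}=1+66=67$ and hence $\dim(\mathfrak{e}_8)^{\varepsilon^{}_4}=67+1+2=70$. The only non-routine step in this plan is the eigenvalue observation for $\nu^{}_4$ in part (2); everything else is bookkeeping in the explicit coordinates or a direct appeal to Proposition~\ref{prop 9.3}.
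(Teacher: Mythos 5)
Your proposal is correct and takes essentially the same route as the paper, whose proof consists precisely of the ``straightforward computation'' of the fixed-point equations $\upsilon R=R$ and $\varepsilon^{}_4R=R$ in the coordinates $(\varPhi,P,Q,r,s,t)$ together with the appeal to Proposition \ref{prop 9.3} for $(\mathfrak{e}_7)^{\nu^{}_4}\cong\mathfrak{u}(1)\oplus\mathfrak{so}(12)$. The details you supply (that $\upsilon$ acts as $-1\in z(E_7)$, so it kills the $P,Q$ components, and that $\nu^{}_4$ has eigenvalues only $1,\pm i$ on $\mathfrak{P}^C$, so $\nu^{}_4P=-P$ forces $P=0$) are exactly the computations the paper leaves implicit.
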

\begin{proof}
    By doing straightforward computation, we can prove this lemma. The  Lie-isomorphism $(\mathfrak{e}_7)^{\nu^{}_4}
    \cong \mathfrak{u}(1) \oplus \mathfrak{so}(12)$ follows from the group isomorphism $(E_7)^{\nu^{}_4} \cong (U(1) \times Spin(12))/\Z_2$
(Proposition \ref{prop 9.3}) .
\end{proof}

\begin{prop}\label{prop 9.5}
    The group $(E_8)^{\varepsilon^{}_4}$ contains a
    subgroup
    $$
    \phi_\upsilon(SU(2)) = \left\lbrace  \phi_\upsilon(A) \in E_8 \relmiddle{|}
    A \in SU(2) \right\rbrace
    $$
    which is isomorphic to the group $SU(2) = \{ A \in M(2, C)
    \, | \, (\tau\,{}^t\!A)A = E, \det A = 1 \}$,\vspace{1mm} where $ \phi_\upsilon $ is defined in Proposition {\rm \ref{prop 7.3}}.
\end{prop}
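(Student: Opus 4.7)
The plan is to leverage the explicit description of $\phi_\upsilon(A)$ as an exponential of an adjoint action that was worked out in Proposition \ref{prop 7.3}, together with the characterization of $(\mathfrak{e}_8)^{\varepsilon^{}_4}$ obtained in Lemma \ref{lem 9.4} (2). Once containment $\phi_\upsilon(SU(2)) \subset (E_8)^{\varepsilon^{}_4}$ is secured, the group-isomorphism $\phi_\upsilon(SU(2)) \cong SU(2)$ is a direct quotation of Proposition \ref{prop 7.3}, since $\phi_\upsilon$ is the same embedding in both statements.

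For the containment, I would first parametrize an element $A \in SU(2)$ as
$$
A = \exp\begin{pmatrix} -i\nu & -\tau\varrho \\ \varrho & i\nu \end{pmatrix},
\quad \nu \in \R,\ \varrho \in C,
$$
and recall from the proof of Proposition \ref{prop 7.3} that
$$
\phi_\upsilon(A) = \exp\bigl(\ad(0,0,0,i\nu,\varrho,-\tau\varrho)\bigr).
$$
So it suffices to verify that $R := (0,0,0,i\nu,\varrho,-\tau\varrho)$ lies in $(\mathfrak{e}_8)^{\varepsilon^{}_4}$; the element $\phi_\upsilon(A)$ will then be a member of $(E_8)^{\varepsilon^{}_4}$ because the latter is a (closed) subgroup whose Lie algebra, viewed as derivations, contains $\ad(R)$.

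Checking $R \in (\mathfrak{e}_8)^{\varepsilon^{}_4}$ is essentially trivial in light of Lemma \ref{lem 9.4} (2): that lemma characterizes $(\mathfrak{e}_8)^{\varepsilon^{}_4}$ as the elements of the form $(\varPhi,0,0,r,s,-\tau s)$ with $\varPhi \in (\mathfrak{e}_7)^{\nu^{}_4}$, $r \in i\R$ and $s \in C$. Our $R$ has $\varPhi = 0$ (which automatically lies in $(\mathfrak{e}_7)^{\nu^{}_4}$), $r = i\nu \in i\R$, $s = \varrho \in C$, and the last coordinate is $-\tau s$ as required; hence $R \in (\mathfrak{e}_8)^{\varepsilon^{}_4}$.

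There is no substantive obstacle here; the only point that needs a brief sanity check is that the same one-parameter family that realizes $\phi_\upsilon(SU(2))$ inside $(E_8)^{\omega^{}_4}$ also sits inside $(E_8)^{\varepsilon^{}_4}$, and this is transparent once Lemma \ref{lem 9.4} is in hand because the ``bottom four slots'' $(0,0,r,s,-\tau s)$ are common to the Lie algebras of both fixed-point subgroups. The injectivity of $\phi_\upsilon$ and the $SU(2)$-structure on $\phi_\upsilon(SU(2))$ have already been recorded in Proposition \ref{prop 7.3}, completing the proof.
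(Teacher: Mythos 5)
Your proposal is correct and follows essentially the same route as the paper: writing $A=\exp\begin{pmatrix} -i\nu & -\tau\varrho \\ \varrho & i\nu \end{pmatrix}$, identifying $\phi_\upsilon(A)=\exp(\ad(0,0,0,i\nu,\varrho,-\tau\varrho))$, and observing via Lemma \ref{lem 9.4} (2) that $(0,0,0,i\nu,\varrho,-\tau\varrho)\in(\mathfrak{e}_8)^{\varepsilon^{}_4}$, so that $\phi_\upsilon(A)\in(E_8)^{\varepsilon^{}_4}$, with the $SU(2)$-isomorphism quoted from Proposition \ref{prop 7.3}. Your write-up merely makes explicit the membership check that the paper leaves implicit.
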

\begin{proof}
    For $A = \begin{pmatrix}
    a & -\tau b \\
    b & \tau a
    \end{pmatrix}:=
    \exp\begin{pmatrix}
    - i\nu & - \tau\varrho \\
    \varrho & i\nu
    \end{pmatrix} \in SU(2)$, where $ \begin{pmatrix}
    - i\nu & - \tau\varrho \\
    \varrho & i\nu
    \end{pmatrix}\in \mathfrak{su}(2) $, we have $\phi_\upsilon(A) =
    \exp(\ad(0, 0, 0, i\nu,  \varrho, -\tau\varrho)) \in
    (E_8)^{\varepsilon^{}_4}$ (Lemma \ref{lem 9.4} (2)).
\end{proof}

\if0
\begin{prop}\label{prop }
    The group $(E_7)^{\nu^{}_4}$ is isomorphic to the group $(U(1) \times
    Spin(12))/\Z_2, \Z_2=\{(1, 1), (-1, -\sigma)
    \}${\rm :} $(E_7)^{\nu^{}_4} \cong (U(1) \times Spin(12))/
    \Z_2$.
\end{prop}
\begin{proof}
    Let $U(1) = \{ \theta \in C \, | \, (\tau\theta)\theta = 1
    \}$. We define an embedding $\phi_{{}_{U(1)}} : U(1) \to(
    E_7)^{\nu^{}_4}$
    by the restriction of the mapping $ \varphi^{}_2 $ defined in \cite[Theorem 4.11.13]{iy0}:
    \begin{align*}
    &\quad
    \phi_{{}_{U(1)}}(\theta)(X,Y,\xi,\eta), \,(X,Y,\xi,\eta) \in \mathfrak{P}^C
    \\
    &=\varphi^{}_2(\begin{pmatrix}
    \theta & 0 \\
    0  & \tau\theta
    \end{pmatrix})\Bigl(
    \begin{pmatrix}
    \xi_1 & x_3 & \overline{x}_2 \\
    \overline{x}_3 & \xi_2 & x_1 \\
    x_2 & \overline{x}_1 & \xi_3
    \end{pmatrix},
    \begin{pmatrix}
    \eta_1 & y_3 & \overline{y}_2 \\
    \overline{y}_3 & \eta_2 & y_1 \\
    y_2 & \overline{y}_1 & \eta
    \end{pmatrix}, \xi, \eta \Bigr) \\
    & =  \Bigl( \begin{pmatrix}
    \theta\xi_1 & x_3 & \overline{x}_2 \\
    \overline{x}_3 & (\tau\theta)\xi_2 & (\tau\theta)x_1 \\
    x_2 & (\tau\theta)\overline{x}_1 & (\tau\theta)\xi_3
    \end{pmatrix},
    \begin{pmatrix}
    (\tau\theta)\eta_1 & y_3 & \overline{y}_2 \\
    \overline{y}_3 & \theta\eta_2 & \theta y_1 \\
    y_2 & \theta\overline{y}_1 & \theta\eta_3
    \end{pmatrix}, \theta\xi, (\tau\theta)\eta \Bigr).
    \end{align*}
    Then
    we have $\phi_{{}_{U(1)}}(\theta) \in E_7$, moreover from
    $\nu^{}_4=\phi_{{}_{U(1)}}(-i)$, it is clear that
    $\phi_{{}_{U(1)}}(\theta)\nu^{}_4=\nu^{}_4
    \phi_{{}_{U(1)}}(\theta)$, that is, $\phi_{{}_{U(1)}}(U(1))
    \subset (E_7)^{\nu^{}_4}$. Let $Spin(12)$ proved in
    \cite[Theorem 4.11.11]{iy0}. Then from \cite[Theorem
    4.11.15]{iy0}, $\varphi_2 (A), A \in SU(2)$ and
    $\beta \in Spin(12)$ commute with each other, where the
    mapping $\varphi_2$ is defined in \cite[Theorem
    4.11.13]{iy0} again. Hence since
    $\phi_{{}_{U(1)}}(\theta)=\varphi_2(\diag(\theta,
    \tau\theta))$, we have
    $\phi_{{}_{U(1)}}(\theta)\beta=\beta\phi_{{}_{U(1)}}(\theta)$.
     Thus again from $\nu^{}_4=\phi_{{}_{U(1)}}(-i)$, we see
    $\nu^{}_4\beta=\beta\nu^{}_4$, that is, $Spin(12) \subset
    (E_7)^{\nu^{}_4}$.

    We define a mapping $\varphi_{{}_{\nu^{}_4}}: U(1) \times
    Spin(12) \to (E_7)^{\nu^{}_4}$ by
    $$
    \varphi_{{}_{\nu^{}_4}}(\theta, \beta)=
    \phi_{{}_{U(1)}}(\theta)\beta.
    $$
    From the argument above, $ \varphi_{{}_{\nu^{}_4}}$ is
    well-defined and a homomorphism. We will prove that
    $ \varphi_{{}_{\nu^{}_4}}$ is surjective. Let $ \alpha \in (E_7)^{\nu^{}_4} $. Then, since
    $(\nu^{}_4)^2=(\phi_{{}_{U(1)}}(-i))^2=\phi_{{}_{U(1)}}(-1)=-
    \sigma$ and $(E_7)^{-\sigma}=(E_7)^\sigma \cong (SU(2) \times
    Spin(12))/\Z_2$ (\cite[Theorem
    4.11.15]{iy0}), we see $ \alpha \in (E_7)^\sigma$,
    Hence there exist $A \in SU(2)$
    and $\beta \in Spin(12)$ such that $\alpha =\varphi(A,
    \beta)$, where $\varphi: SU(2)\times Spin(12) \to (E_7)^\sigma$ . Moreover from the condition $\nu^{}_4 \alpha {\nu^{}_4}^{-1} =\alpha $, that is, $\nu^{}_4 \varphi(A, \beta){\nu^{}_4}^{-1}= \varphi(A,
    \beta)$, using $ \nu^{}_4=\phi_{{}_{U(1)}}(-i) $ we have $\varphi(\begin{pmatrix}a & -b\\
    -c & d
    \end{pmatrix} , \nu^{}_4 \beta
    {\nu^{}_4}^{-1})=\varphi(\begin{pmatrix}a & b\\
    c & d
    \end{pmatrix} , \beta )$ as $A:=\begin{pmatrix}a & b\\
    c & d
    \end{pmatrix}$.  Hence we have the following
    $$
    \left\{ \begin{array}{l}\begin{pmatrix}a & -b\\
    -c & d
    \end{pmatrix}=\begin{pmatrix}a & b\\
    c & d
    \end{pmatrix}
    \vspace{2mm}\\
    \kappa_4 \beta {\kappa_4}^{-1}=\beta
    \end{array} \right. \quad
    \text{or} \quad
    \left\{ \begin{array}{l}\begin{pmatrix}a & -b\\
    -c & d
    \end{pmatrix}=\begin{pmatrix}-a & -b\\
    -c & -d
    \end{pmatrix}
    \vspace{2mm}\\
    \kappa_4 \beta {\kappa_4}^{-1}=-\sigma\beta
    \end{array} \right..
    $$
    In the latter case, from $\nu^{}_4 \beta=\beta \nu^{}_4$, we have $\beta=-\sigma\beta$. Hence
    the latter case is impossible. Indeed, if there exists $ \beta \in Spin(12) $ such that $ \beta=-\sigma\beta $. Then apply $ \beta^{-1} $ on this formula, we have $ 1=-\sigma $. This is contradiction.

    \noindent In the former case. From the first condition $\begin{pmatrix}a & -b\\
    -c & d
    \end{pmatrix}=\begin{pmatrix}a & b\\
    c & d
    \end{pmatrix} \in SU(2)$, we have $A=\begin{pmatrix}a & 0\\
    0& \tau a
    \end{pmatrix}, (\tau a)a=1$, that is, $a \in U(1)$, and it is
    trivial $\beta \in Spin(12)$. Thus there exist $ \theta \in U(1) $ and $ \beta \in Spin(12) $ such that $ \alpha=\varphi(\theta,\beta)=\varphi_{{}_{\nu^{}_4}}(\theta,\beta) $.
    With above, the proof of surjective is completed.

    Finally, since $ \varphi_{{}_{\nu^{}_4}} $ is restriction of the mapping $ \varphi $, it is easy to obtain that $ \Ker,\varphi_{{}_{\nu^{}_4}}= \{(1, 1), (-1, -\sigma) \}
    \cong \Z_2$.

    Therefore we have the required isomorphism  $(E_7)^{\nu^{}_4}
    \cong (U(1) \times Spin(12))/\Z_2$.
\end{proof}

\begin{prop}\label{prop }
    The group $(E_7)^{\nu^{}_4}$ is the subgroup of the group
    $(E_8)^{\varepsilon^{}_4}${\rm:} $(E_7)^{\nu^{}_4}
    \subset (E_8)^{\varepsilon^{}_4}$.
\end{prop}
\begin{proof}
    Let $\alpha \in(E_7)^{\nu^{}_4}$. Note that $-1 \in z(E_7)$
    (the center of $E_7$),  we have that
    \begin{align*}
    \varepsilon^{}_4\alpha(\varPhi, P, Q, r, s, t)
    &=\varepsilon^{}_4(\alpha\varPhi\alpha^{-1}, \alpha
    P, \alpha Q, r, s, t)
    \\
    &=(\nu^{}_4 \alpha\varPhi\alpha^{-1}{\nu^{}_4}^{-1},
    -\nu^{}_4 \alpha P, -\nu^{}_4 \alpha Q, r, s, t)
    \\
    &=(\alpha(\nu^{}_4 \varPhi{\nu^{}_4}^{-1})\alpha^{-1},
    \alpha(-\nu^{}_4  P), \alpha(-\nu^{}_4  Q), r, s, t)
    \\
    &=\alpha\varepsilon^{}_4(\varPhi, P, Q, r, s, t),
    \end{align*}
    that is,
    $\varepsilon^{}_4\alpha=\alpha\varepsilon^{}_4 $.
    Hence we have $(E_7)^{\nu^{}_4} \subset (E_8)^{\varepsilon^{}_4}$.
\end{proof}
\fi

Now, we determine the structure of the group
$(E_8)^{\varepsilon^{}_4}$.

\begin{thm}\label{thm 9.6}
    The group $(E_8)^{\varepsilon^{}_4}$ is isomorphic to the group
    $(SU(2) \times U(1) \times Spin(12))/\allowbreak (\Z_2 \times
    \Z_2), \Z_2=\{(E, 1,1), (E,-1, -\sigma)\},
    \Z_2=\{(E,1,1),(-E, 1, -1) \}${\rm
    :} $(E_8)^{\varepsilon^{}_4} \cong (SU(2) \times
    U(1) \times Spin(12))/(\Z_2 $ $ \times
    \Z_2)$.
\end{thm}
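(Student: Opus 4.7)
The plan is to parallel the proof of Theorem \ref{thm 7.5}, replacing $\iota$ and $\varphi_\iota$ by $\nu^{}_4$ and $\varphi_{{}_{\nu^{}_4}}$ throughout. First I will define a mapping
\begin{align*}
\varphi_{{}_{\varepsilon^{}_4}} : SU(2) \times U(1) \times Spin(12) \longrightarrow (E_8)^{\varepsilon^{}_4},\quad (A,\theta,\beta) \longmapsto \phi_\upsilon(A)\,\varphi_{{}_{\nu^{}_4}}(\theta,\beta).
\end{align*}
Well-definedness follows from Propositions \ref{prop 9.5} and \ref{prop 9.3}, which place both factors in $(E_8)^{\varepsilon^{}_4}$. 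The homomorphism property is immediate from the identity $\varphi_{{}_{\varepsilon^{}_4}}(A,\theta,\beta) = \varphi(A,\varphi_{{}_{\nu^{}_4}}(\theta,\beta))$, so $\varphi_{{}_{\varepsilon^{}_4}}$ is the composition of $\varphi$ (Theorem \ref{thm 7.4}) with $\mathrm{id}\times\varphi_{{}_{\nu^{}_4}}$.

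The heart of the proof is surjectivity, for which the decisive step is the inclusion $(E_8)^{\varepsilon^{}_4} \subset (E_8)^\upsilon$. In contrast to Case 5 the identity $(\omega^{}_4)^2 = \upsilon$ is not available: one computes $(\varepsilon^{}_4)^2 = -\sigma$, which is not $\upsilon$. Instead I will combine Lemma \ref{lem 9.4}, which gives $(\mathfrak{e}_8)^{\varepsilon^{}_4} \subset (\mathfrak{e}_8)^\upsilon$, with the fact that both centralizers are connected (centralizers of elements in the simply connected compact group $E_8$ are connected, as used in the proof of Theorem \ref{thm 8.2}); the Lie-algebra inclusion then lifts to the groups. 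For $\alpha \in (E_8)^{\varepsilon^{}_4}$, Theorem \ref{thm 7.4} writes $\alpha = \varphi(A,\delta)$ with $(A,\delta) \in SU(2) \times E_7$. Using $\varepsilon^{}_4 = \nu^{}_4\upsilon$ as an element of $E_8$, that $\upsilon = -1 \in z(E_7)$ commutes with $\delta$, and $\phi_\upsilon(A) \in (E_8)^{\varepsilon^{}_4}$ (Proposition \ref{prop 9.5}), I compute
\begin{align*}
\varphi(A,\delta) = \varepsilon^{}_4\alpha(\varepsilon^{}_4)^{-1} = \phi_\upsilon(A)\,\nu^{}_4\delta(\nu^{}_4)^{-1} = \varphi(A,\,\nu^{}_4\delta(\nu^{}_4)^{-1}).
\end{align*}
Since $\Ker\,\varphi = \{(E,1),(-E,-1)\}$, the alternative $A = -A$ is excluded, forcing $\delta \in (E_7)^{\nu^{}_4}$; Proposition \ref{prop 9.3} then provides $(\theta,\beta)$ with $\delta = \varphi_{{}_{\nu^{}_4}}(\theta,\beta)$, so $\alpha = \varphi_{{}_{\varepsilon^{}_4}}(A,\theta,\beta)$.

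The kernel is determined by nesting the two kernel computations. The condition $\varphi(A,\varphi_{{}_{\nu^{}_4}}(\theta,\beta)) = 1$ together with $\Ker\,\varphi = \{(E,1),(-E,-1)\}$ splits into two cases. In the case $A = E$, using $\Ker\,\varphi_{{}_{\nu^{}_4}} = \{(1,1),(-1,-\sigma)\}$ (Proposition \ref{prop 9.3}), I obtain the contributions $(E,1,1)$ and $(E,-1,-\sigma)$. In the case $A = -E$, the condition becomes $\varphi_{{}_{\nu^{}_4}}(\theta,\beta) = -1$; taking the particular solution $(\theta,\beta) = (1,-1)$ (together with $\phi_{{}_{U(1)}}(-1) = -\sigma$ from the proof of Proposition \ref{prop 9.3}) and multiplying by the coset $\Ker\,\varphi_{{}_{\nu^{}_4}}$ yields $(-E,1,-1)$ and $(-E,-1,\sigma)$. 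These four elements factor as the direct product of the order-two subgroups $\{(E,1,1),(E,-1,-\sigma)\}$ and $\{(E,1,1),(-E,1,-1)\}$ stated in the theorem, giving $\Ker\,\varphi_{{}_{\varepsilon^{}_4}} \cong \Z_2 \times \Z_2$ and hence the required isomorphism.

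The main obstacle is precisely the inclusion $(E_8)^{\varepsilon^{}_4} \subset (E_8)^\upsilon$: since $(\varepsilon^{}_4)^2 \neq \upsilon$, one cannot copy the squaring argument from Theorem \ref{thm 7.5}, and must instead pass through the Lie-algebra inclusion of Lemma \ref{lem 9.4} combined with connectedness of centralizers in the simply connected compact group $E_8$. Once this is in hand, the rest of the proof follows the template of Theorem \ref{thm 7.5} closely.
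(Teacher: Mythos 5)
Your proposal is correct and follows essentially the same route as the paper: the same restriction map $\varphi_{{}_{\varepsilon^{}_4}}(A,\theta,\beta)=\phi_\upsilon(A)\varphi_{{}_{\nu^{}_4}}(\theta,\beta)$, the same key step establishing $(E_8)^{\varepsilon^{}_4}\subset(E_8)^\upsilon$ via the Lie-algebra inclusion of Lemma \ref{lem 9.4} together with connectedness of the fixed-point subgroups in the simply connected group $E_8$ (the paper cites \cite{ra} for exactly this), and the same conjugation and kernel analysis. Your kernel treatment of the $A=-E$ case by a particular solution $(\theta,\beta)=(1,-1)$ times the coset of $\Ker\,\varphi_{{}_{\nu^{}_4}}$ is just a rephrasing of the paper's rewriting $\varphi_{{}_{\nu^{}_4}}(\theta,\beta)=-1$ as $\varphi_{{}_{\nu^{}_4}}(\theta,-\beta)=1$, and yields the same four elements.
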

\begin{proof}
    We define a mapping $\varphi_{{}_{\varepsilon^{}_4}}:
    SU(2) \times U(1) \times Spin(12) \to
    (E_8)^{\varepsilon^{}_4}$ by
    \begin{align*}
     \varphi_{{}_{\varepsilon^{}_4}}(A, \theta,
     \beta)=\phi_\upsilon(A)\varphi_{{}_{\nu^{}_4}}(\theta,\beta)(=\varphi(A,\varphi_{{}_{\nu^{}_4}}(\theta,\beta))).
    \end{align*}
   Note that this mapping is the restriction of the mapping $ \varphi:SU(2)\times E_7 \to (E_8)^\upsilon $ defined in the proof of Theorem \ref{thm 7.4}.

    First, we will prove that $ \varphi_{{}_{\varepsilon^{}_4}} $ is well-defined. However, from Propositions \ref{prop 9.3},\ref{prop 9.5}, it is clear that $\varphi_{{}_{\varepsilon^{}_4}}$ is well-defined. Subsequently, we will prove that $ \varphi_{{}_{\varepsilon^{}_4}} $ is a homomorphism. Since the mapping $ \varphi_{{}_{\varepsilon^{}_4}} $ is the restriction of the mapping $ \varphi $ and $ \varphi_{{}_{\nu^{}_4}} $ is a homomorphism (Proposition \ref{prop 9.3}), $ \varphi_{{}_{\varepsilon^{}_4}} $ is a homomorphism.


    Next, we will prove that $\varphi_{{}_{\varepsilon^{}_4}}$ is surjective. Let
    $\alpha \in (E_8)^{\varepsilon^{}_4}$. Then, since the group $ E_8 $ is the simply connected Lie group, both of the groups $ (E_8)^{\varepsilon^{}_4} $ and $ (E_8)^\upsilon $ are connected ((\cite[Preliminaries Lemma 2.2]{miya2}) in \cite{ra}), together with $ (\mathfrak{e}_8)^{\varepsilon^{}_4} \subset (\mathfrak{e}_8)^\upsilon $ (Lemma \ref{lem 9.4} (1), (2)), we have $ \alpha \in (E_8)^{\varepsilon^{}_4} \subset (E_8)^\upsilon $. Hence
    there exist $A \in SU(2)$ and
    $\delta \in E_7$ such that $\alpha =\varphi(A, \delta)$ (Theorem \ref{thm 7.4}).
    Moreover, from the condition
    $\varepsilon^{}_4\alpha{\varepsilon^{}_4}^{-1}=\alpha$, that is, $ \varepsilon^{}_4\varphi(A,\delta){\varepsilon^{}_4}^{-1}=\varphi(A,\delta) $, we have $\varphi(A, \nu^{}_4 \delta {\nu^{}_4}^{-1})=\varphi(A,
    \delta)$. Indeed, from $\varepsilon^{}_4 (0, 0, 0, i\nu, \varrho, -\tau\varrho)=(0, 0, 0, i\nu, $ $\varrho, -\tau\varrho)$ and $\phi_\upsilon(A)=\exp(\ad(0, 0, 0, i\nu, \varrho, -\tau\varrho))$, we have
    $\varepsilon^{}_4\phi_\upsilon(A){\varepsilon^{}_4}^{-1}=\phi_\upsilon (A)$ by a computation similar to that in the proof of Theorem \ref{thm 7.5}.
In addition, $ \varepsilon^{}_4\delta{\varepsilon^{}_4}^{-1}=\nu^{}_4\delta{\nu^{}_4}^{-1} $ follows from $ \delta \in E_7 $. Hence it follows  from
    \begin{align*}
    \varepsilon^{}_4\varphi(A, \delta){\varepsilon^{}_4}^{-1}
    &=\varepsilon^{}_4(\phi_\upsilon (A)\delta )
    {\varepsilon^{}_4}^{-1}
    \\
    &=(\varepsilon^{}_4\phi_\upsilon (A){\varepsilon^{}_4}^{-1})(\varepsilon^{}_4\delta{\varepsilon^{}_4}^{-1})
    \\
    &=\phi_\upsilon(A)(\nu^{}_4\delta {\nu^{}_4}^{-1})
    \\
    &=\varphi(A, \nu^{}_4\delta {\nu^{}_4}^{-1})
    \end{align*}
that $\varphi(A, \nu^{}_4 \delta {\nu^{}_4}^{-1})=\varphi(A, \delta)$.

  Thus, since $ \Ker\varphi=\{(E,1), (-E,-1)\} $, we have the following
    $$
    \left\{\begin{array}{l}
    A = A
    \vspace{1mm}\\
    \nu^{}_4\delta {\nu^{}_4}^{-1} = \delta
    \end{array} \right.
    \qquad   \text{or}\qquad
    \left\{\begin{array}{l}
    A= -A
    \vspace{1mm}\\
    \nu^{}_4\delta {\nu^{}_4}^{-1} = -\delta.
    \end{array} \right.
    $$
    In the latter case, this case is impossible because of $A\not=O$, where $ O $ is the zero matrix.
    In the former case, $ \delta \in (E_7)^{\nu_4} $ follows from the second condition, so that
    there exist $\theta \in U(1)$ and $\beta \in Spin(12)$ such that $\delta=\varphi_{{}_{\nu^{}_4}}(\theta,\beta)$ (Proposition \ref{prop 9.3}). Hence there exist $ A \in SU(2), \theta \in U(1) $ and $ \beta \in Spin(12) $ such that $ \alpha=\varphi(A,\theta,\varphi_{{}_{\nu^{}_4}}(\theta,\beta))=\varphi_{{}_{\varepsilon^{}_4}}(A,\theta,\beta) $.
    The proof of surjective is completed.

    Finally, we will determine $ \Ker\,\varphi_{{}_{\varepsilon^{}_4}} $. From the definition of kernel, we have
    \begin{align*}
    \Ker\,\varphi_{{}_{\varepsilon^{}_4}}
    &=\{(A, \theta, \beta) \in SU(2) \times U(1) \times Spin(12)\,|\,
    \varphi_{{}_{\varepsilon^{}_4}}(A,\theta,\beta)=1  \}
    \\
    &=\{(A, \theta, \beta) \in SU(2) \times U(1) \times
    Spin(12)\,|\, \varphi(A,\varphi_{{}_{\nu^{}_4}}(\theta,\beta))=1 \}.
    \end{align*}
    Here, the mapping $\varphi_{{}_{\varepsilon^{}_4}}$
    is the restriction of the mapping $ \varphi $ and together with $\Ker\,\varphi=\{(E,1), (-E, -1) \}$ (Theorem \ref{thm 7.4}),
    we will find the elements $ (A,\theta, \beta) \in SU(2)\times U(1)\times Spin(12) $ satisfying the following
    \begin{align*}
    \left\{\begin{array}{l}
        A = E
        \vspace{1mm}\\
        \varphi_{{}_{\nu^{}_4}}(\theta,\beta) = 1
    \end{array} \right.
    \qquad   \text{or}\qquad
    \left\{\begin{array}{l}
        A= -E
        \vspace{1mm}\\
        \varphi_{{}_{\nu^{}_4}}(\theta,\beta) = -1.
    \end{array} \right.
    \end{align*}
    In the former case, from $ \Ker\,\varphi_{{}_{\nu^{}_4}}=\{(1,1),(-1,-\sigma)\} $ (Proposition \ref{prop 9.3}), we have the following
    $$
    \left\{\begin{array}{l}
    A = E
    \vspace{1mm}\\
    \theta = 1
    \vspace{1mm}\\
    \beta=1
    \end{array} \right.
    \qquad   \text{or}\qquad
    \left\{\begin{array}{l}
    A= E
    \vspace{1mm}\\
    \theta = -1
    \vspace{1mm}\\
    \beta=-\sigma.
    \end{array} \right.
    $$
    In the latter case, the second condition
    can be rewritten as $\varphi_{{}_{\nu^{}_4}}(\theta,-\beta) = 1$ from $-1 \in z(E_7) $, moreover $ -\beta \in Spin(12) $. Hence, as in the former case, we have the following
    \begin{align*}
     \left\{\begin{array}{l}
         A = -E
         \vspace{1mm}\\
         \theta = 1
         \vspace{1mm}\\
         \beta=-1
     \end{array} \right.
     \qquad   \text{or}\qquad
     \left\{\begin{array}{l}
         A= -E
         \vspace{1mm}\\
         \theta = -1
         \vspace{1mm}\\
         \beta=\sigma.
     \end{array} \right.
    \end{align*}

    Hence we can obtain
    \begin{align*}
    \Ker\,\varphi_{{}_{\varepsilon^{}_4}}&=\{(E,1,1),
    (E,-1, -\sigma), (-E, 1, -1), (-E, -1, \sigma)  \}\\
    &=\{(E,1,1),(E, -1, -\sigma) \} \times \{(E,1,1), (-E, 1, -1) \}\\
    & \cong \Z_2 \times \Z_2.
    \end{align*}

    Therefore we have the required isomorphism
    \begin{align*}
     (E_8)^{\varepsilon^{}_4} \cong (SU(2) \times U(1)
    \times Spin(12))/(\Z_2 \times \Z_2).
    \end{align*}
\end{proof}

\end{document}